\newtheorem{maintheorem}{Main Theorem}
\newtheorem{mainapplication}{Main Application}
\newtheorem{theorem}{Theorem}[section]
\newtheorem{lemma}[theorem]{Lemma}
\newtheorem{proposition}[theorem]{Proposition}
\theoremstyle{definition}
\newtheorem{definition}[theorem]{Definition}
\newtheorem{example}[theorem]{Example}
\theoremstyle{remark}
\newtheorem{remark}[theorem]{Remark}
\numberwithin{equation}{section}
   \let\temp\relax
   \let\temp 
 \chardef\EPSFCatAt\the\catcode`\@
 \chardef\C@tColon\the\catcode`\:
 \chardef\C@tSemicolon\the\catcode`\;
 \chardef\C@tQmark\the\catcode`\?
 \chardef\C@tEmark\the\catcode`\!
 \chardef\C@tDqt\the\catcode`\"
 \def\PunctOther@{\catcode`\:=12
   \catcode`\;=12 \catcode`\?=12 \catcode`\!=12 \catcode`\"=12}
 \let\wlog@ld\wlog 
 \def\wlog#1{\relax} 
 \newdimen\XShift@ \newdimen\YShift@ 
 \newtoks\Realtoks
 \newdimen\Wd@ \newdimen\Ht@
 \newdimen\Wd@@ \newdimen\Ht@@
 \newdimen\TT@
 \newdimen\LT@
 \newdimen\BT@
 \newdimen\RT@
 \newdimen\XSlide@ \newdimen\YSlide@ 
 \newdimen\TheScale  
 \newdimen\FigScale  
 \newdimen\ForcedDim@@
 \newtoks\EPSFDirectorytoks@
 \newtoks\EPSFNametoks@
 \newtoks\BdBoxtoks@
 \newtoks\LLXtoks@  
 \newtoks\LLYtoks@
 \newif\ifNotIn@
 \newif\ifForcedDim@
 \newif\ifForceOn@
 \newif\ifForcedHeight@
 \newif\ifPSOrigin
 \newread\EPSFile@ 
  \def\ms@g{\immediate\write16}
 \newif\ifIN@\def\IN@{\expandafter\INN@\expandafter}
  \long\def\INN@0#1@#2@{\long\def\NI@##1#1##2##3\ENDNI@
    {\ifx\m@rker##2\IN@false\else\IN@true\fi}%
     \expandafter\NI@#2@@#1\m@rker\ENDNI@}
  \def\m@rker{\m@@rker}
  \newtoks\Initialtoks@  \newtoks\Terminaltoks@
  \def\SPLIT@{\expandafter\SPLITT@\expandafter}
  \def\SPLITT@0#1@#2@{\def\TTILPS@##1#1##2@{%
     \Initialtoks@{##1}\Terminaltoks@{##2}}\expandafter\TTILPS@#2@}
  \newtoks\Trimtoks@
 \def\ForeTrim@{\expandafter\ForeTrim@@\expandafter}
 \def\ForePrim@0 #1@{\Trimtoks@{#1}}
 \def\ForeTrim@@0#1@{\IN@0\m@rker. @\m@rker.#1@%
     \ifIN@\ForePrim@0#1@%
     \else\Trimtoks@\expandafter{#1}\fi}
  \def\Trim@0#1@{%
      \ForeTrim@0#1@%
      \IN@0 @\the\Trimtoks@ @%
        \ifIN@ 
             \SPLIT@0 @\the\Trimtoks@ @\Trimtoks@\Initialtoks@
             \IN@0\the\Terminaltoks@ @ @%
                 \ifIN@
                 \else \Trimtoks@ {FigNameWithSpace}%
                 \fi
        \fi
      }
   \newtoks\pt@ks
   \def\getpt@ks 0.0#1@{\pt@ks{#1}}
  \newtoks\Realtoks
  \def\Real#1{%
    \dimen2=#1%
      \SPLIT@0\the\pt@ks @\the\dimen2@
       \Realtoks=\Initialtoks@
            }
   \newdimen\Product
   \def\Mult#1#2{%
     \dimen4=#1\relax
     \dimen6=#2%
     \Real{\dimen4}%
     \Product=\the\Realtoks\dimen6%
        }
 \newdimen\Inverse
 \newdimen\hmxdim@ \hmxdim@=8192pt
 \def\Invert#1{%
  \Inverse=\hmxdim@
  \dimen0=#1%
  \divide\Inverse \dimen0%
  \multiply\Inverse 8}
   \def\Rescale#1#2#3{
              \divide #1 by 100\relax
              \dimen2=#3\divide\dimen2 by 100 \Invert{\dimen2}%
              \Mult{#1}{#2}%
              \Mult\Product\Inverse 
              #1=\Product}
  \def\Scale#1{\dimen0=\TheScale %
      \divide #1 by  1280 
      \divide \dimen0 by 5120 %
      \multiply#1 by \dimen0 
      \divide#1 by 10   
     }
 \newbox\scrunchbox
 \def\Scrunched#1{{\setbox\scrunchbox\hbox{#1}%
   \wd\scrunchbox=0pt
   \ht\scrunchbox=0pt
   \dp\scrunchbox=0pt
   \box\scrunchbox}}
 \def\Shifted@#1{%
   \vbox {\kern-\YShift@
       \hbox {\kern\XShift@\hbox{#1}\kern-\XShift@}%
           \kern\YShift@}}
 \def\cBoxedEPSF#1{{\leavevmode 
   \ReadNameAndScale@{#1}%
   \SetEPSFSpec@
   \ReadEPSFile@ \ReadBdB@x  
     \TrimFigDims@ 
     \CalculateFigScale@  
     \ScaleFigDims@
     \SetInkShift@
   \hbox{$\mathsurround=0pt\relax
         \vcenter{\hbox{%
             \FrameSpider{\hskip-.4pt\vrule}%
             \vbox to \Ht@{\offinterlineskip\parindent=\z@%
                \FrameSpider{\vskip-.4pt\hrule}\vfil 
                \hbox to \Wd@{\hfil}%
                \vfil
                \InkShift@{\EPSFSpecial{\EPSFSpec@}{\FigSc@leReal}}%
             \FrameSpider{\hrule\vskip-.4pt}}%
         \FrameSpider{\vrule\hskip-.4pt}}}%
     $\relax}%
    \CleanRegisters@ 
    \ms@g{ *** Box composed for the %
         EPS file \the\EPSFNametoks@}%
    }}
 \def\tBoxedEPSF#1{\setbox4\hbox{\cBoxedEPSF{#1}}%
     \setbox4\hbox{\raise -\ht4 \hbox{\box4}}%
     \box4
      }
 \def\bBoxedEPSF#1{\setbox4\hbox{\cBoxedEPSF{#1}}%
     \setbox4\hbox{\raise \dp4 \hbox{\box4}}%
     \box4
      }
  \let\BoxedEPSF\cBoxedEPSF
   \let\BoxedArt\BoxedEPSF
  \def\gLinefigure[#1scaled#2]_#3{%
        \BoxedEPSF{#3 scaled #2}}
  \def\EPSFxsize{\afterassignment\ForceW@\ForcedDim@@}
      \def\ForceW@{\ForcedDim@true\ForcedHeight@false}
  \def\EPSFysize{\afterassignment\ForceH@\ForcedDim@@}
      \def\ForceH@{\ForcedDim@true\ForcedHeight@true}
  \def\EmulateRokicki{%
       \let\epsfbox\bBoxedEPSF \let\epsffile\bBoxedEPSF
       \let\epsfxsize\EPSFxsize \let\epsfysize\EPSFysize} 
 \def\ReadNameAndScale@#1{\IN@0 scaled@#1@
   \ifIN@\ReadNameAndScale@@0#1@%
   \else \ReadNameAndScale@@0#1 scaled\DefaultMilScale @%
   \fi}
 \def\ReadNameAndScale@@0#1scaled#2@{
    \let\OldBackslash@\\%
    \def\\{\OtherB@ckslash}%
    \edef\temp@{#1}%
    \Trim@0\temp@ @%
    \EPSFNametoks@\expandafter{\the\Trimtoks@ }%
    \FigScale=#2 pt%
    \let\\\OldBackslash@
    }
 \def\SetDefaultEPSFScale#1{%
      \global\def\DefaultMilScale{#1}}
 \def \SetBogusBbox@{%
     \global\BdBoxtoks@{ BoundingBox:0 0 100 100 }%
     \global\def\BdBoxLine@{ BoundingBox:0 0 100 100 }%
     \ms@g{ !!! Will use placeholder !!!}%
     }
\gdef\P@S@{

 \def\ReadEPSFile@{
     \openin\EPSFile@\EPSFSpec@
     \relax  
  \ifeof\EPSFile@
     \ms@g{}%
     \ms@g{ !!! EPS FILE \the\EPSFDirectorytoks@
       \the\EPSFNametoks@\space WAS NOT FOUND !!!}%
     \SetBogusBbox@
  \else
   \begingroup
   \catcode`\%=12\catcode`\:=12\catcode`\!=12
   \catcode"00=14 \catcode"7F=14 \catcode`\\=14 
   \global\read\EPSFile@ to \BdBoxLine@ 
   \IN@0\P@S@ @\BdBoxLine@ @%
   \ifIN@ 
     \NotIn@true
     \loop   
       \ifeof\EPSFile@\NotIn@false 
         \ms@g{}%
         \ms@g{ !!! BoundingBox NOT FOUND IN %
            \the\EPSFDirectorytoks@\the\EPSFNametoks@\space!!! }%
         \SetBogusBbox@
       \else\global\read\EPSFile@ to \BdBoxLine@
       \fi
       \global\BdBoxtoks@\expandafter{\BdBoxLine@}%
       \IN@0BoundingBox:@\the\BdBoxtoks@ @%
       \ifIN@\NotIn@false\fi%
     \ifNotIn@
     \repeat
   \else
         \ms@g{}%
         \ms@g{ !!! \the\EPSFNametoks@\space is not PostScript.}%
         \ms@g{ !!! It should begin with the "\P@S@". }%
         \ms@g{ !!! Also, all other header lines until }%
         \ms@g{ !!!  "\pct@@ EndComments"  should begin with "\pct@@". }%
         \SetBogusBbox@
   \fi
  \endgroup\relax
  \fi
  \closein\EPSFile@ 
   }

  \def\ReadBdB@x{
   \expandafter\ReadBdB@x@\the\BdBoxtoks@ @}
  
  \def\ReadBdB@x@#1BoundingBox:#2@{
    \ForeTrim@0#2@%
    \IN@0atend@\the\Trimtoks@ @%
       \ifIN@\Trimtoks@={0 0 100 100 }%
         \ms@g{}%
         \ms@g{ !!! BoundingBox not found in %
         \the\EPSFDirectorytoks@\the\EPSFNametoks@\space !!!}%
         \ms@g{ !!! It must not be at end of EPSF !!!}%
         \ms@g{ !!! Will use placeholder !!!}%
       \fi
    \expandafter\ReadBdB@x@@\the\Trimtoks@ @%
   }
    
  \def\ReadBdB@x@@#1 #2 #3 #4@{
      \Wd@=#3bp\advance\Wd@ by -#1bp%
      \Ht@=#4bp\advance\Ht@ by-#2bp%
       \Wd@@=\Wd@ \Ht@@=\Ht@ 
       \LLXtoks@={#1}\LLYtoks@={#2}
      \ifPSOrigin\XShift@=-#1bp\YShift@=-#2bp\fi 
     }

   %
   \def\G@bbl@#1{}
   \bgroup
     \global\edef\OtherB@ckslash{\expandafter\G@bbl@\string\\}
   \egroup

  \def\SetEPSFDirectory{
           \bgroup\PunctOther@\relax
           \let\\\OtherB@ckslash
           \SetEPSFDirectory@}

 \def\SetEPSFDirectory@#1{
    \edef\temp@{#1}%
    \Trim@0\temp@ @
    \global\toks1\expandafter{\the\Trimtoks@ }\relax
    \egroup
    \EPSFDirectorytoks@=\toks1
    }

 \def\SetEPSFSpec@{%
     \bgroup
     \let\\=\OtherB@ckslash
     \global\edef\EPSFSpec@{%
        \the\EPSFDirectorytoks@\the\EPSFNametoks@}%
     \global\edef\EPSFSpec@{\EPSFSpec@}%
     \egroup}

  %
 \def\TrimTop#1{\advance\TT@ by #1}
 \def\TrimLeft#1{\advance\LT@ by #1}
 \def\TrimBottom#1{\advance\BT@ by #1}
 \def\TrimRight#1{\advance\RT@ by #1}

 \def\TrimBoundingBox#1{%
   \TrimTop{#1}%
   \TrimLeft{#1}%
   \TrimBottom{#1}%
   \TrimRight{#1}%
       }

 \def\TrimFigDims@{%
    \advance\Wd@ by -\LT@ 
    \advance\Wd@ by -\RT@ \RT@=\z@
    \advance\Ht@ by -\TT@ \TT@=\z@
    \advance\Ht@ by -\BT@ 
    }

  %
  \def\ForceWidth#1{\ForcedDim@true
       \ForcedDim@@#1\ForcedHeight@false}
  
  \def\ForceHeight#1{\ForcedDim@true
       \ForcedDim@@=#1\ForcedHeight@true}

  \def\ForceOn{\ForceOn@true}
  \def\ForceOff{\ForceOn@false\ForcedDim@false}
  
  \def\CalculateFigScale@{%
     \ifForcedDim@\FigScale=1000pt
           \ifForcedHeight@
                \Rescale\FigScale\ForcedDim@@\Ht@
           \else
                \Rescale\FigScale\ForcedDim@@\Wd@
           \fi
     \fi
     \Real{\FigScale}%
     \edef\FigSc@leReal{\the\Realtoks}%
     }
   
  \def\ScaleFigDims@{\TheScale=\FigScale
      \ifForcedDim@
           \ifForcedHeight@ \Ht@=\ForcedDim@@  \Scale\Wd@
           \else \Wd@=\ForcedDim@@ \Scale\Ht@
           \fi
      \else \Scale\Wd@\Scale\Ht@        
      \fi
      \ifForceOn@\relax\else\global\ForcedDim@false\fi
      \Scale\LT@\Scale\BT@  
      \Scale\XShift@\Scale\YShift@
      }
      
 \def\HideDisplacementBoxes{\global\def\FrameSpider##1{\null}}
 \def\ShowDisplacementBoxes{\global\def\FrameSpider##1{##1}}
 \let\HideFigureFrames\HideDisplacementBoxes 
 \let\ShowFigureFrames\ShowDisplacementBoxes
 \ShowDisplacementBoxes
 
 \def\hSlide#1{\advance\XSlide@ by #1}
 \def\vSlide#1{\advance\YSlide@ by #1}
 
  \def\SetInkShift@{%
            \advance\XShift@ by -\LT@
            \advance\XShift@ by \XSlide@
            \advance\YShift@ by -\BT@
            \advance\YShift@ by -\YSlide@
             }
  \def\InkShift@#1{\Shifted@{\Scrunched{#1}}}
 
   %
  \def\CleanRegisters@{%
      \globaldefs=1\relax
        \XShift@=\z@\YShift@=\z@\XSlide@=\z@\YSlide@=\z@
        \TT@=\z@\LT@=\z@\BT@=\z@\RT@=\z@
      \globaldefs=0\relax}

 
 \def\SetTexturesEPSFSpecial{\PSOriginfalse
  \gdef\EPSFSpecial##1##2{\relax
    \edef\specialtemp{##2}%
    \SPLIT@0.@\specialtemp.@\relax
    \special{illustration ##1 scaled
                        \the\Initialtoks@}}}
 
  \def\SetUnixCoopEPSFSpecial{\PSOrigintrue 
   \gdef\EPSFSpecial##1##2{%
      \dimen4=##2pt
      \divide\dimen4 by 1000\relax
      \Real{\dimen4}
      \edef\Aux@{\the\Realtoks}%
      \includegraphics{##1\space}}}

  \def\SetBechtolsheimEPSFSpecial@{
   \PSOrigintrue
   \special{\DriverTag@ Include0 "psfig.pro"}%
   \gdef\EPSFSpecial##1##2{%
      \dimen4=##2pt 
      \divide\dimen4 by 1000\relax
      \Real{\dimen4} 
      \edef\Aux@{\the\Realtoks}
      \special{\DriverTag@ Literal "10 10 0 0 10 10 startTexFig
           \the\mag\space 1000 div 
           dup 3.25 neg mul 1 index .25 neg mul translate 
           \Aux@\space mul dup scale "}%
      \special{\DriverTag@ Include1 "##1"}%
      \special{\DriverTag@ Literal "endTexFig "}%
        }}

  \def\SetBechtolsheimDVITPSEPSFSpecial{\def\DriverTag@{dvitps: }%
      \SetBechtolsheimEPSFSpecial@}

  \def\SetBechtolsheimDVI2PSEPSFSSpecial{\def\DriverTag@{DVI2PS: }%
      \SetBechtolsheimEPSFSpecial@}

  \def\SetLisEPSFSpecial{\PSOrigintrue 
   \gdef\EPSFSpecial##1##2{%
      \dimen4=##2pt
      \divide\dimen4 by 1000\relax
      \Real{\dimen4}
      \edef\Aux@{\the\Realtoks}%
      \special{pstext="10 10 0 0 10 10 startTexFig\space
           \the\mag\space 1000 div \Aux@\space mul 
           \the\mag\space 1000 div \Aux@\space mul scale"}%
      \includegraphics{##1}%
      \special{pstext=endTexFig}%
        }}

  \def\SetRokickiEPSFSpecial{\PSOrigintrue 
   \gdef\EPSFSpecial##1##2{%
      \dimen4=##2pt
      \divide\dimen4 by 10\relax
      \Real{\dimen4}
      \edef\Aux@{\the\Realtoks}%
      \includegraphics{##1}}}

  \def\SetInlineRokickiEPSFSpecial{\PSOrigintrue 
   \gdef\EPSFSpecial##1##2{%
      \dimen4=##2pt
      \divide\dimen4 by 1000\relax
      \Real{\dimen4}
      \edef\Aux@{\the\Realtoks}%
      \special{ps::[begin] 10 10 0 0 10 10 startTexFig\space
           \the\mag\space 1000 div \Aux@\space mul 
           \the\mag\space 1000 div \Aux@\space mul scale}%
      \special{ps: plotfile ##1}%
      \special{ps::[end] endTexFig}%
        }}

 \def\SetOzTeXEPSFSpecial{\PSOrigintrue
 \gdef\EPSFSpecial##1##2{%
 \dimen4=##2pt
 \divide\dimen4 by 1000\relax
 \Real{\dimen4}
 \edef\Aux@{\the\Realtoks}
 \special{epsf=\string"##1\string"\space scale=\Aux@}%
 }} 

  \def\SetPSprintEPSFSpecial{\PSOriginFALSE 
   \gdef\EPSFSpecial##1##2{
     \special{##1\space 
       ##2 1000 div \the\mag\space 1000 div mul
       ##2 1000 div \the\mag\space 1000 div mul scale
       \the\LLXtoks@\space neg \the\LLYtoks@\space neg translate
       }}}

 \def\SetArborEPSFSpecial{\PSOriginfalse 
   \gdef\EPSFSpecial##1##2{%
     \edef\specialthis{##2}%
     \SPLIT@0.@\specialthis.@\relax 
     \special{ps: epsfile ##1\space \the\Initialtoks@}}}

 \def\SetClarkEPSFSpecial{\PSOriginfalse 
   \gdef\EPSFSpecial##1##2{%
     \Rescale {\Wd@@}{##2pt}{1000pt}%
     \Rescale {\Ht@@}{##2pt}{1000pt}%
     \special{dvitops: import 
           ##1\space\the\Wd@@\space\the\Ht@@}}}

  \let\SetDVIPSONEEPSFSpecial\SetUnixCoopEPSFSpecial
  \let\SetDVIPSoneEPSFSpecial\SetUnixCoopEPSFSpecial

  \def\SetBeebeEPSFSpecial{
   \PSOriginfalse%
   \gdef\EPSFSpecial##1##2{\relax
    \special{language "PS",
      literal "##2 1000 div ##2 1000 div scale",
      position = "bottom left",
      include "##1"}}}
  \let\SetDVIALWEPSFSpecial\SetBeebeEPSFSpecial

  \def\SetNorthlakeEPSFSpecial{\PSOrigintrue
   \gdef\EPSFSpecial##1##2{%
     \edef\specialthis{##2}%
     \SPLIT@0.@\specialthis.@\relax 
     \special{insert ##1,magnification=\the\Initialtoks@}}}

 \def\SetStandardEPSFSpecial{%
   \gdef\EPSFSpecial##1##2{%
     \ms@g{}
     \ms@g{%
       !!! Sorry! There is still no standard for \string%
       \special\space EPSF integration !!!}%
     \ms@g{%
      --- So you will have to identify your driver using a command}%
     \ms@g{%
      --- of the form \string\Set...EPSFSpecial, in order to get}%
     \ms@g{%
      --- your graphics to print.  See BoxedEPS.doc.}%
     \ms@g{}
     \gdef\EPSFSpecial####1####2{}
     }}

  \SetStandardEPSFSpecial 
 
 \let\wlog\wlog@ld 

 \catcode`\:=\C@tColon
 \catcode`\;=\C@tSemicolon
 \catcode`\?=\C@tQmark
 \catcode`\!=\C@tEmark
 \catcode`\"=\C@tDqt

 \catcode`\@=\EPSFCatAt

 %
 %
 %
 %
 %

\HideDisplacementBoxes
\SetRokickiEPSFSpecial  
%

%
%
\newcommand{\bN}{\mathbb{N}} 
\newcommand{\bZ}{\mathbb{Z}} 
\newcommand{\bQ}{\mathbb{Q}} 
\newcommand{\bR}{\mathbb{R}} 
\newcommand{\bC}{\mathbb{C}} 
\newcommand{\bF}{\mathbb{F}} 
\newcommand{\bK}{\mathbb{K}} 
\newcommand{\bL}{\mathbb{L}} 
\newcommand{\bB}{\mathbb{B}} 
\newcommand{\cA}{\mathcal{A}} 
\newcommand{\cB}{\mathcal{B}} 
\newcommand{\cC}{\mathcal{C}} 
\newcommand{\cD}{\mathcal{D}} 
\newcommand{\cF}{\mathcal{F}} 
\newcommand{\cH}{\mathcal{H}} 
\newcommand{\cK}{\mathcal{K}} 
\newcommand{\cL}{\mathcal{L}} 
\newcommand{\cM}{\mathcal{M}} 
\newcommand{\cP}{\mathcal{P}} 
\newcommand{\cS}{\mathcal{S}} 
\newcommand{\cT}{\mathcal{T}} 
\newcommand{\cU}{\mathcal{U}} 
\newcommand{\cV}{\mathcal{V}} 
\DeclareMathOperator{\Map}{Map} 
\DeclareMathOperator{\Hom}{Hom} 
\DeclareMathOperator{\Ker}{Ker} 
\DeclareMathOperator{\Coker}{Coker} 
\DeclareMathOperator{\Image}{Im} 
\newcommand{\bull}{$~$\\$\bullet \;\;$}
\newcommand{\comment}[1]{{\bf {({#1})}}}
\newcommand{\sidecomment}[1]{\marginlabel{\small{$\blob$ {#1}}}}
\newcommand{\ra}{\rightarrow} 
\newcommand{\ul}[1]{\underline{#1}} 
\newcommand{\dprime}{{\prime\prime}} 
\newcommand{\gap}{\\ [1.5mm]} 
\newcommand{\spaces}{\;\;\;\;\;\;\;} 
\newcommand{\ot}{\otimes} 
\newcommand{\ol}{\widetilde} 
\newcommand{\ob}{\overline} 
\newcommand{\bfx}{{\bf x}}
\newcommand{\bx}{{\bf x}}
\newcommand{\by}{{\bf y}}
\newcommand{\bz}{{\bf z}}
%
%
\newcommand{\cpr}{\cC\cP_R}
\newcommand{\bundr}{\cB und_R}
\newcommand{\bundrB}[1]{\cB und_R({#1})}
\newcommand{\spf}[1]{\cS_{#1}(P,\cF)}
\newcommand{\hpc}[1]{H_{#1}(P,\cF)}
\newcommand{\cpc}[1]{\cC_{#1}(P,\cF)}
\newcommand{\kbc}[1]{\cK_{#1}(\bB,\cF)}
\newcommand{\cbc}[1]{\cC_{#1}(\bB,\cF)}
\newcommand{\kh}[2]{K\!H_{{#1},{#2}}}
\newcommand{\khoriginal}[2]{K\!H^{{#1},{#2}}}
\newcommand{\obkh}[2]{\ob{K\!H}_{{#1},{#2}}}
\newcommand{\rmod}{\cM\text{od}_R}
\newcommand{\grrmod}{Gr\cM\text{od}_R}
\newcommand{\chr}{\text{Ch}_R}
\newcommand{\obB}{\overline{\bB}}
\newcommand{\obc}[2]{\overline{\cC}_{{#1},{#2}}}
\newcommand{\obcoriginal}[2]{\overline{\cC}^{{#1},{#2}}}
\newcommand{\obcK}{\overline{\cK}}
\newcommand{\oE}{\overline{E}}

\newcommand{\tC}{\widetilde{\cC}}
\newcommand{\tK}{\widetilde{\cK}}

%
%

%
%
\DeclareMathAlphabet{\ams}{U}{msb}{m}{n}
\DeclareMathAlphabet{\goth}{U}{euf}{m}{n}
\def\so{\text{SO}}
\def\pso{\text{PSO}}
\def\po{\text{PO}}
\def\sl{\text{SL}}
\def\psl{\text{PSL}}
\def\pgl{\text{PGL}}
\def\gl{\text{GL}}
\def\ml{\text{ML}}
\def\m{\text{M}}
\def\d{\text{D}}
\def\su{\text{SU}}
\def\sp{\text{Sp}}
\def\f{\text{F}}
\def\pu{\text{PU}}
\def\gal{\text{Gal}}
\def\homeo{\text{Homeo}}
\def\id{\text{id}}
\def\rr{\cal{R}}
\def\rk{\text{rk}\,}
\def\coker{\text{coker}\,}
\def\vol{\text{vol}\,}
\def\covol{\text{covol}\,}
\def\im{\text{im}\,}
\def\ker{\text{ker}\,}
\def\aut{\text{Aut}}
\def\isom{\text{Isom}\,}
\def\endo{\text{End}}
\def\sym{\text{Sym}}
\def\ov{\overline}
\def\tl{\tilde}
\def\wtl{\widetilde}
\def\wh{\widehat}
\def\supp{\text{supp}\,}
\def\rank{\text{rank}\,}
\def\dom{\text{dom}}
\def\reflec{\text{Reflec}}
\def\codim{\text{codim}\,}
\def\II{\mathscr I}
\def\NN{\mathscr N}
\def\BB{\mathscr B}
\def\FF{\mathcal F}
\def\EE{\mathcal E}
\def\BB{\mathcal B}
\def\AA{\mathcal A}
\def\CC{\mathcal C}
\def\OO{\mathcal O}
\def\JJ{\mathcal J}
\def\HH{\mathcal H}
\def\RR{\mathcal R}
\def\LL{\mathcal L}
\def\PP{\mathcal P}
\def\QQ{\mathcal Q}
\def\TT{\mathcal T}
\def\DD{\mathcal D}
\def\SS{\mathcal S}
\def\KK{\mathcal K}
\def\gS{\goth{S}}
\def\BBB{\goth{B}}
\def\XXX{\goth{X}}
\def\aa{\alpha}
\def\ww{\omega}
\def\bb{\beta}
\def\ss{\sigma}
\def\vphi{\varphi}
\def\wvphi{\widehat{\varphi}}
\def\ll{\lambda}
\def\ve{\varepsilon}
\def\Om{\Omega}
\def\wh{\widehat}
\def\ch{\check}
\def\Z{\ams{Z}}\def\E{\ams{E}}
\def\H{\ams{H}}\def\R{\ams{R}}
\def\C{\ams{C}}\def\Q{\ams{Q}}
\def\F{\ams{F}}\def\K{\ams{K}}
\def\P{\ams{P}}\def\B{\ams{B}}
\def\O{\ams{O}}
\def\G{\ams{G}}
\def\M{\ams{M}}
\def\e{\mathbf{e}}
\def\w{\mathbf{w}}
\def\u{\mathbf{u}}
\def\x{\mathbf{x}}
\def\y{\mathbf{y}}
\def\z{\mathbf{z}}
\def\zhat{\hat{\mathbf{z}}}
\def\0{\mathbf{0}}
\def\quo{/\kern -.45em\sim}
%
\newpsobject{showgrid}{psgrid}{subgriddiv=1,griddots=10,gridlabels=6pt,gridcolor=red}
%
\def\ds{\displaystyle}
\def\blob{\bullet}
\def\Langle{\langle\kern -2pt\langle}
\def\Rangle{\rangle\kern -1.9pt\rangle}
%

%
%
\newgray{lightergray}{.85}
%

\begin{document}

\title[Bundles and Khovanov homology]{Bundles of coloured posets and a Leray-Serre spectral sequence for Khovanov homology}

\author{Brent Everitt}
\address{Department of Mathematics, University of York, York YO10 5DD, England}
\email{brent.everitt@york.ac.uk}
\thanks{The first author thanks
Finnur Larusson for many useful and stimulating discussions. He 
is also
grateful to the Alpine Mathematical Institute, Haute-Savoie, France,
and to the Institute for Geometry and its Applications, University of
Adelaide, Australia.}

\author{Paul Turner}
\address{D\'epartement de math\'ematiques, 
Universit\'e de Fribourg, CH-1700 Fribourg, Switzerland.}
\email{prt.maths@gmail.com}
\thanks{The second author was partially supported by the Swiss National Science Foundation projects no. 200020-113199 and no. 200020-121506.}

\subjclass[2010]{Primary 57M27; Secondary 06A11, 55T10}



\keywords{Coloured poset, spectral sequence, poset bundle, Khovanov homology}



\begin{abstract}
The decorated hypercube found in the construction of Khovanov homology
for links is an example of a Boolean lattice equipped with a presheaf
of modules. One can place this in a wider setting as an example of a coloured
poset, that is to say a poset with a unique maximal element equipped
with a presheaf of modules. In this paper we initiate the study of a
bundle theory for coloured posets, producing for a certain class of
base posets a Leray-Serre type spectral sequence. We then show how
this theory finds application in Khovanov homology by producing a new
spectral sequence converging to the Khovanov homology of a given link.
\end{abstract}


\maketitle

\section*{Introduction}
Our motivation for studying coloured posets comes from a desire to
understand better some of the structures arising in the 
Khovanov homology of links \cite{Khovanov00, Bar-Natan02}. One can ask if Khovanov's ``cube'' construction
can be placed in a broader context, hoping that this
new perspective allows one to lever some advantage. In
\cite{Everitt-Turner} we showed that this is possible using the
framework of what we call coloured posets. This is something of a
half-way house between the specifics of ``cubes'' and the full
generality of presheaves of modules over small categories. We believe
this is an appropriate place to study Khovanov homology as it provides
enough generality to be useful, without losing sight of the starting
point. The idea being that one can then bring the algebraic topology
of coloured posets into the game.

One direction of interest is a bundle theory for coloured posets which
we initiate in this paper. Given a poset $B$ with unique maximal
element we define bundles of coloured posets over $B$. Roughly, we
wish to capture the idea that a coloured poset may be decomposed as a
family of coloured posets parametrized by another poset, much as a
fibre bundle is a family of spaces parametrized by some base space.
A bundle of coloured posets has a total
coloured poset $(E,\cF)$, and our central interest is in computing its
homology.  Our main result in this direction, proved as Theorem
\ref{thm:main} in \S\ref{sec:main}, is a Leray-Serre style spectral
sequence for bundles where the base has a certain technical property that
we call special admissibility:

\begin{maintheorem}
Let $\xi:B\rightarrow\cpr$ be a bundle of coloured posets with $B$ 
finite and specially admissible, and 
$(E,\cF)$ the associated total coloured poset. Then there is a spectral sequence
\[
E^2_{p,q} = H_p(B, \cH^{\text{{\rm fib}}}_q (\xi)) \Longrightarrow H_*(E,\cF).
\]
\end{maintheorem}

It turns out that a number of naturally ocurring posets are
specially admissible, and
importantly for applications to
knot homology theories, this class includes the Boolean lattices.
Given a knot diagram $D$ together with $k$
distinguished crossings $c_1, \ldots , c_k$, one can obtain a new complex in
the spirit of Khovanov: resolving each of
the remaining $\ell$ crossings gives another link diagram, and the
$\ell$ diagrams that result
can be placed as the vertices of a
hyper-cube (a Boolean lattice).  Taking the unnormalised Khovanov
homology of these diagrams and using induced maps along edges one
obtains, by the construction of Khovanov, a triply graded complex
whose homology we denote $ \ob{K\!H}_{*,*,*}(D; c_1, \ldots , c_k)$.

\begin{mainapplication}
Let $D$ be a link diagram and let $c_1, \ldots , c_k$ be a subset of
the crossings of $D$. 
For each $i$, there is a spectral sequence
\[
E^2_{p,q} = \ob{K\!H}_{p,q,i}(D; c_1, \ldots , c_k)  \Longrightarrow \obkh {p+q} i (D).
\]
The $r$-th differential in the spectral sequence has bidegree $(-r,r-1)$.
\end{mainapplication}

The paper unfolds as follows. In the next section we define
bundles of coloured posets, describe the associated
total coloured poset, and give a number of examples. We also 
recollect facts from \cite{Everitt-Turner} about coloured posets and their
homology. In \S\ref{section300} we start with a bundle of
coloured posets and introduce a certain bi-complex which (by standard
methods) gives rise to a spectral sequence converging to the homology
of the total complex. Our task later will be to recognise
this as the homology of the total coloured poset of the bundle we started
with. In fact we can only do this for a restricted class of bases, the
specially admissible ones, and 
we introduce these in \S\ref{section400}. We give a number of
examples, including the Boolean lattices. Following this,
\S\ref{sec:les} introduces the technical tools needed to
prove the main theorem, namely certain
long exact sequences. In \S\ref{sec:main} we state and prove the
main theorem. Finally in \S\ref{sec:appl} we turn to Khovanov
homology and construct a new spectral sequence which for a given link has a
potentially computable $E^2$-page, and which converges to the Khovanov homology
of the link.

\section{Bundles of coloured posets}
\label{sec:bund}

Recall from \cite{Everitt-Turner} that a coloured poset is a pair
$(P,\cF)$ consisting of a poset $P$
having a unique maximal element $1_P$, and a covariant functor $\cF
\colon P \ra \rmod$, called the {\em colouring}. This may be regarded
as a representation of $P$ or as a presheaf of modules over $P$ according
to taste. A {\em morphism } of coloured posets $(P_1,\cF_1) \ra
(P_2,\cF_2)$ is a pair $(f,\tau)$ where $f\colon P_1 \ra P_2$ is a map
of posets, and $\tau=\{\tau_x\}_{x\in P_1}$ is a collection of
$R$-module homomorphisms $\tau_x\colon \cF_1(x) \ra \cF_2(f(x))$.
This pair must satisfy (i) $f(x) = 1_{P_2}$ if and only if
$x=1_{P_1}$, and (ii) for all $x\leq y$ in $P_1$, the following
diagram commutes
$$
\begin{pspicture}(12.5,3.25)
\rput(-1.05,0){
\rput(6,0.5){
\psframe[linecolor=lightergray,fillstyle=solid,fillcolor=lightergray](-1.05,-.5)(1.05,2.5)
\rput(0,2.75){$P_1$}
\psframe[linecolor=lightergray,fillstyle=solid,fillcolor=lightergray](1.95,-.5)(4.05,2.5)
\rput(3,2.75){$P_2$}
\rput(0,0){
\rput(0,0){$\cF_1(x)$}
\psline[linewidth=.3mm]{->}(0,.3)(0,1.7)
\psframe[linecolor=lightergray,fillstyle=solid,fillcolor=lightergray](-1.,.75)(1.,1.25)
\rput(0,1){$\scriptstyle{\cF_1(x\leq y)}$}
\rput(0,2){$\cF_1(y)$}
}
\rput(3,0){
\rput(0,0){$\cF_2(f(x))$}
\psline[linewidth=.3mm]{->}(0,.3)(0,1.7)
\psframe[linecolor=lightergray,fillstyle=solid,fillcolor=lightergray](-1.05,.75)(1.05,1.25)
\rput(0,1){$\scriptstyle{\cF_2(f(x)\leq f(y))}$}
\rput(0,2){$\cF_2(f(y))$}
}
\psline[linewidth=.3mm]{->}(.6,0)(2.15,0)
\rput(1.35,.2){$\scriptstyle{\tau_x}$}
\psline[linewidth=.3mm]{->}(.6,2)(2.15,2)
\rput(1.35,2.2){$\scriptstyle{\tau_y}$}
}}
\end{pspicture}
$$
Coloured posets and morphisms between them form the category $\cpr$.

We also recall from \cite{Everitt-Turner} our poset convention that
if $P$ has a unique minimal element $0_p$, then $0_p<1_p$. In particular
the poset with a single element has a $1$ but not a $0$. If $x<y$ in $P$ and
there is no $z$ with $x<z<y$, then one says that $y$ covers $x$ and writes $x\prec y$.

Throughout this paper, if $x\in P$ we write $P(x)$ for the
interval 
$$P(x):=\{y\in P\,|\,y\leq x\}
$$ 
and $\ov{P}(x)$ for it's complement
$$
\ov{P}(x):=\{y\in P\,|\,y\not\leq x\}.
$$ 
From now on we will just say ``poset with 1'' to refer to a
poset with unique maximal element.

The idea of a bundle is that it consists of a collection of coloured posets 
(the fibres) parametrized by another poset (the base) that encodes instructions
for gluing the fibres together.

\begin{definition}\label{sec:bund:definition100}
Let $B$ be a poset with unique maximal element $1_B$. 
A {\em bundle of coloured posets with base} $B$ is a (covariant) functor $\xi \colon B \ra \cpr$.
\end{definition}

Thus, to each $x\in B$ there is an associated coloured poset $ \xi(x)=
(E_x,\cF_x)$, the {\em fibre} over $x$. Whenever
$x\leq z$ there is an associated morphism of coloured posets 
$\xi(x\leq z) = (f_x^z,\tau_x^z)$ from $\xi(x)$ to $\xi(z)$. 
There is a projection map of (uncoloured) 
posets $\pi\colon E \ra B$ defined by  
$\pi(y)=x$ if and only if $y\in P_x$.
Definition \ref{sec:bund:definition100} is akin to
the interpretation of a vector bundle (with connection) as a functor
from the path space of the base to the category of vector
bundles. What is missing from this definition is the analogue of the
total space, which is somewhat hidden but which can be constructed as
follows.

\begin{definition}\label{sec:bund:definition200}
Let $B$ be a poset with 1 and let $\xi \colon B \ra \cpr$ be a bundle
of coloured posets. The associated {\em total coloured poset} is the
coloured poset $(E,\cF)$ defined by: 
\begin{itemize}
\item As a set $E = \bigcup_{x\in B} E_x$, the union of the fibres,  
and the partial order on $E$ is defined as follows:
\begin{itemize}
\item if $y,y^\prime\in E_x$, so $y$ and $y^\prime$ are in the same
fibre, then $y\leq y^\prime$ iff $y\leq y^\prime$ in $E_x$,
\item if $y\in E_x$ and $y^\prime\in E_{x^\prime}$ with $x\neq
 x^\prime$, then $y\leq y^\prime$ iff $x\leq x^\prime$ in $B$ and
 $f_x^{x'}(y)\leq y^\prime$ in $E_{x^\prime}$.
\end{itemize}
\item The colouring $\cF\colon E \ra \cpr$ is defined
on an object $y$ in fibre $E_x$ by setting $\cF(y) = \cF_x(y)$,
and on a morphism $y\leq y^\prime$ as follows:
\begin{itemize}
\item  if $y,y^\prime\in E_x$ then $\cF(y\leq y^\prime) = \cF_x(y\leq y^\prime)$, 
\item if $y\in E_x$ and $y^\prime\in E_{x^\prime}$ with $x\neq
 x^\prime$, then $$\cF(y\leq y^\prime) = \cF_{x^\prime}(f_x^{x'}(y)
 \leq  y^\prime)\circ (\tau_x^{x^\prime})_y.$$
\end{itemize}
\end{itemize}
\end{definition}

In the last line of the definition we are using the morphism of coloured posets 
$\xi (x\leq x^\prime) = (f_x^{x^\prime}, \tau_x^{x^\prime})$ and recalling
that $ (\tau_x^{x^\prime})_y$ is a map $\cF_x(y)\rightarrow\cF_{x'}(f_x^{x'}(y)) $. 

\begin{lemma}
The pair $(E,\cF)$ of Definition \ref{sec:bund:definition200} is a coloured poset.
\end{lemma}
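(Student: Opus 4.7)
The proof is a direct verification that the proposed data in Definition \ref{sec:bund:definition200} satisfy the axioms of a coloured poset: namely that $(E,\leq)$ is a poset with unique maximal element, and that $\cF$ is a well-defined functor to $\rmod$.

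First I would check the three poset axioms for $\leq$ on $E$. Reflexivity is immediate from reflexivity in each fibre. For antisymmetry, if $y\leq y'$ and $y'\leq y$ with $y\in E_x, y'\in E_{x'}$, then the case $x\neq x'$ is ruled out by antisymmetry in $B$, reducing to antisymmetry in a single fibre $E_x$. Transitivity splits into cases according to how many of the fibres containing $y,y',y''$ coincide; the essential case has $y\in E_x, y'\in E_{x'}, y''\in E_{x''}$ with $x<x'<x''$, where $f_x^{x'}(y)\le y'$ and $f_{x'}^{x''}(y')\le y''$. Applying the poset map $f_{x'}^{x''}$ and using the functoriality identity $f_{x'}^{x''}\circ f_x^{x'}=f_x^{x''}$ (which holds because $\xi$ is a functor into $\cpr$) yields $f_x^{x''}(y)\le y''$, as required. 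The mixed cases (two of the three elements in a common fibre) are simpler variations on this.

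Next I would identify the unique maximal element. The natural candidate is $1_{E_{1_B}}$, the maximal element of the fibre over $1_B$. For any $y\in E_x$ with $x\ne 1_B$, one has $x<1_B$ and $f_x^{1_B}(y)\leq 1_{E_{1_B}}$ in $E_{1_B}$, so $y<1_{E_{1_B}}$ in $E$; if $x=1_B$, then $y\le 1_{E_{1_B}}$ directly in the fibre. Uniqueness of the maximum then follows. (Note that the condition $f(x)=1_{P_2}\iff x=1_{P_1}$ in the definition of morphisms of coloured posets is not needed for maximality, only the weaker fact that $f_x^{1_B}$ lands in $E_{1_B}$.)

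For the functoriality of $\cF$, identity morphisms are preserved because each $\cF_x$ is a functor. For composition, the interesting case is $y\in E_x, y'\in E_{x'}, y''\in E_{x''}$ with $x,x',x''$ pairwise distinct, where I would compute
\[
\cF(y'\le y'')\circ\cF(y\le y')=\cF_{x''}(f_{x'}^{x''}(y')\le y'')\circ(\tau_{x'}^{x''})_{y'}\circ\cF_{x'}(f_x^{x'}(y)\le y')\circ(\tau_x^{x'})_y.
\]
The commutative square defining morphisms in $\cpr$ gives the naturality identity $(\tau_{x'}^{x''})_{y'}\circ\cF_{x'}(f_x^{x'}(y)\le y')=\cF_{x''}(f_{x'}^{x''}(f_x^{x'}(y))\le f_{x'}^{x''}(y'))\circ(\tau_{x'}^{x''})_{f_x^{x'}(y)}$, and composition of morphisms in $\cpr$ gives $(\tau_{x'}^{x''})_{f_x^{x'}(y)}\circ(\tau_x^{x'})_y=(\tau_x^{x''})_y$ together with $f_{x'}^{x''}\circ f_x^{x'}=f_x^{x''}$. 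Substituting and using functoriality of $\cF_{x''}$ collapses the expression to $\cF_{x''}(f_x^{x''}(y)\le y'')\circ(\tau_x^{x''})_y=\cF(y\le y'')$. The remaining mixed cases use only a single naturality square and are routine.

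The main (mild) obstacle is bookkeeping in the transitivity and functoriality verifications, where several cases must be split off according to which of the three elements lie in the same fibre. Each case reduces either to a property internal to one fibre or to a single application of naturality of some $\tau_x^{x'}$ together with the functoriality of $\xi$.
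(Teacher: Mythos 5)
Your proof is correct and follows essentially the same route as the paper: the key composition identity in the three-distinct-fibres case is established by exactly the same three ingredients the paper uses (naturality of $\tau_{x'}^{x''}$, functoriality of $\xi$ giving $f_{x'}^{x''}\circ f_x^{x'}=f_x^{x''}$ and $(\tau_{x'}^{x''})_{f_x^{x'}(y)}\circ(\tau_x^{x'})_y=(\tau_x^{x''})_y$, and functoriality of $\cF_{x''}$), with your algebraic computation corresponding precisely to the paper's commutative diagram. You are somewhat more explicit than the paper about the poset axioms and the unique maximal element, which the paper dispatches with ``easily checked''.
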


\begin{proof}
It is easily checked that $E$ is a poset. For the colouring we must verify that 
composition behaves as it should. There are four cases to consider of which we 
treat one in detail, the others being much simpler.

Let $y\in E_x$,  $y'\in E_{x'}$ and  $y''\in E_{x''}$ where $x\neq x'\neq x''$. 
Suppose that $y\leq y'$ and $y'\leq y''$. We must show that 
$\cF(y'\leq y'')\circ \cF(y\leq y') = \cF(y\leq y'')$. Consider the following diagram
$$
\begin{pspicture}(12.5,6)
\rput(-1.7,0){
\rput(-.5,0){
\psframe[linecolor=lightergray,fillstyle=solid,fillcolor=lightergray](2.7,0)(4.2,2.5)
\rput(2.45,2.3){$E_x$}
}
\psframe[linecolor=lightergray,fillstyle=solid,fillcolor=lightergray](5.5,0)(7.7,4.5)
\rput(5.2,4.3){$E_{x'}$}
\rput(.5,0){
\psframe[linecolor=lightergray,fillstyle=solid,fillcolor=lightergray](8.5,0)(11.5,6)
\rput(8.1,5.8){$E_{x''}$}
}
\rput(3.25,1.5){
\rput(-.3,0){$\cF_{x}(y)$}
\psline[linewidth=.3mm]{->}(.3,0)(2.4,0)
\rput(1.45,.3){$\scriptstyle{(\tau_{x}^{x'})_{y}}$}
\rput(3.4,0){
\rput(0,0){
\rput(0,0){$\cF_{x'}(f_x^{x'}(y))$}
\psline[linewidth=.3mm]{->}(0,.3)(0,1.7)
\psframe[linecolor=lightergray,fillstyle=solid,fillcolor=lightergray](-.85,.75)(.85,1.25)
\rput(0,1){$\scriptstyle{\cF_{x'}(f_x^{x'}(y)\leq y')}$}
\rput(0,2){$\cF_{x'}(y')$}
}
\psline[linewidth=.3mm]{->}(.95,0)(2.8,0)
\rput(1.8,.3){$\scriptstyle{(\tau_{x'}^{x''})_{f_x^{x'}(y)}}$}
\psline[linewidth=.3mm]{->}(.8,2)(2.6,2)
\rput(1.65,2.3){$\scriptstyle{(\tau_{x'}^{x''})_{y'}}$}
\rput(3.9,0){
\rput(0,0){$\cF_{x''}(f_x^{x''}(y))$}
\psline[linewidth=.3mm]{->}(0,.3)(0,1.7)
\psframe[linecolor=lightergray,fillstyle=solid,fillcolor=lightergray](-1.05,.75)(1.05,1.25)
\rput(0,1){$\scriptstyle{\cF_{x''}(f_x^{x''}(y)\leq f_{x'}^{x''}(y'))}$}
\rput(0,2){$\cF_{x''}(f_{x'}^{x''}(y'))$}
\psline[linewidth=.3mm]{->}(0,2.3)(0,3.7)
\psframe[linecolor=lightergray,fillstyle=solid,fillcolor=lightergray](-1,2.75)(1,3.25)
\rput(0,3){$\scriptstyle{\cF_{x''}(f_{x'}^{x''}(y')\leq y'')}$}
\rput(0,4){$\cF_{x''}(y'')$}
\psline[linewidth=.3mm,linearc=.3]{->}(1.1,0)(2.5,0)(2.5,4)(.75,4)
\rput*(2.7,2){$\scriptstyle{\cF_{x''}(f_x^{x''}(y)\leq y'')}$}
}
}
\psline[linewidth=.3mm,linearc=.3]{->}(-.3,-.3)(-.3,-1)(7.3,-1)(7.3,-.3)
\rput(3.4,-.7){$\scriptstyle{(\tau_{x}^{x''})_{y}}$}
}}
\end{pspicture}
$$
The triangle at bottom commutes since $\xi$ is a functor, the 
triangle at the right commutes because $\cF_{x''}$ is a functor and the
square commutes by the naturality of $ \tau_{x'}^{x''}$. Thus the
entire diagram commutes. Going up the steps
gives $\cF(y'\leq y'')\circ \cF(y\leq y')$ while 
following the two arrows with bends gives $\cF(y\leq y'')$. 
\end{proof}

\begin{example}
Figure \ref{figure100} gives an example of a bundle over a Boolean lattice of rank two,
with the colouring left off.
\begin{figure}
  \centering
\begin{pspicture}(12.5,5)
\rput(-1.25,0){
\rput(7.5,2.5){\BoxedEPSF{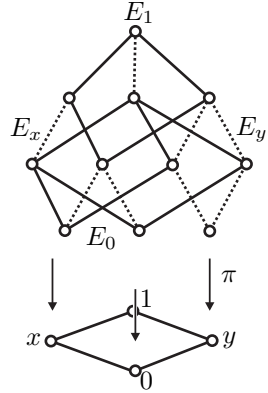 scaled 670}}
\rput(7.6,.1){$0$}\rput(6.1,.65){$x$}
\rput(8.7,.65){$y$}\rput(7.6,1.2){$1$}
\rput(7,2){$E_0$}\rput(6,3.45){$E_x$}
\rput(9,3.45){$E_y$}\rput(7.5,5){$E_1$}
\rput(8.7,1.5){$\pi$}
}
\end{pspicture}
  \caption{Example of a bundle with the colouring omitted. 
}
  \label{figure100}
\end{figure}
\end{example}

\begin{example}
Let $B$ be a poset with 1 and let $(P,\cF)$ be a coloured poset. 
The {\em product bundle} with base $B$ and fibre $(P,\cF)$, denoted $B \times (P,\cF)$, 
is defined by the functor $\xi \colon B \ra \cpr$ where $\xi(x) =(P,\cF)$ and 
$\xi(x_1 \leq  x_2)= \id$. 
\end{example}

\begin{example}
Let $B$ be the Boolean lattice of rank $1$, ie: the poset with two elements 
$0,1$ and $0<1$, and let $\xi$ be a bundle over $B$. 
The associated total coloured poset $(E,\FF)$ 
is the result of gluing the coloured posets $(P_0,\FF_0)$ and 
$(P_1,\FF_1)$ along the morphism 
$$
(f_0^1,\tau_0^1):(P_0,\FF_0)\rightarrow(P_1,\FF_1),
$$
as defined in \cite{Everitt-Turner}*{\S 1}.
\end{example}

\begin{example}\label{ex:boolean}

Let $X$ be a set, $\B(X)$ the Boolean lattice of all subsets of $X$, and 
$\cF:\B(X)\rightarrow\rmod$ a colouring. If $A\in\B(X)$ then $(\B(X),\cF)$ decomposes
as a bundle with base $\B(X\setminus A)$ and each fibre a copy of $\B(A)$: a 
$B\in\B(X)$ lies in the fibre over $B\cap(X\setminus A)$ at the element 
$B\cap A\in\B(A)$, and is coloured $\cF(B)$. This is just a decomposition
of $\B(X)$ as a product of posets $\B(X\setminus A)\times\B(A)$.
If $Y\in\B(X\setminus A)$ then $E_Y=\{Y\cup Z\,|\,Z\in\B(A)\}$ and
$Y\cup Z\mapsto Y'\cup Z$ is the poset map $f_{Y}^{Y'}:E_Y\rightarrow E_{Y'}$
when $Y\leq Y'\in\B(X\setminus A)$, with $(\tau_{Y}^{Y'})_{Y\cup Z}$ equal to the map
$\cF(Y\cup Z\leq Y'\cup Z)$.
\end{example}

\begin{example}\label{ex:int}
If $B$ is a poset with 1, and $\xi$ is a bundle over $B$, then any
sub-poset $A\subseteq B$ having a unique maximal element, has an
associated bundle obtained by restricting $\xi$. This is given by the
composition of functors $A\ra B \ra \cpr$ and will be denoted
$\xi|_A$.  Note that the maximal element of $A$ need not be the
maximal element in $B$.  Two instances of particular importance arise
when $\xi$ is a bundle over $B$ with total coloured poset $(E,\FF)$,
and for $x\in B$ we consider the subposets the interval $B(x)$ and its
complement $\ov{B}(x)$.  The interval $B(x)$ has maximal element $x$
and the complement has maximal element the $1$ of $B$.  We will denote
the total coloured posets of the restricted bundles $\xi |_{B(x)}$ and
$\xi |_{\ov{B}(x)}$ by $(E(x),\FF)$ and $(\ov{E}(x),\FF)$.
\end{example}

Bundles of coloured posets form a category $\bundr$ as we now describe.

\begin{definition}
Let $\xi\colon B \ra \cpr$ and $\xi^\prime\colon B^\prime \ra \cpr$ be bundles 
of coloured posets over $B$ and
$B^\prime$ respectively. A {\em morphism} $(g,\eta)\colon \xi\ra\xi^\prime$ is a map
of (uncoloured) posets $g\colon B\ra B^\prime$ satisfying $g(x) = 1$
iff $x=1$, together with a natural transformation $\eta$ from $\xi$ to
$\xi^\prime\circ g$. 
\end{definition}

By fixing the base poset $B$ and restricting morphisms to those with $g=\id$, 
one obtains a sub-category of bundles over $B$ denoted $\bundrB B$.

Associating the total coloured poset to a bundle is
functorial: given a bundle $\xi\colon B \ra \cpr$
let $\EE(\xi)=(E,\FF)$ be the associated total coloured
poset. For a morphism $(g,\eta)\colon \xi\ra\xi^\prime$ between
bundles $\xi\colon B \ra \cpr$ and $\xi^\prime\colon B^\prime\ra\cpr$, 
we define a morphism of coloured posets $\EE(g,\eta):\EE(\xi)\ra\EE(\xi')$ 
in the following way. Recall that
we must define a poset map $f\colon E \ra E^\prime$ and a collection
$\{\sigma_y\colon \FF(y) \ra \FF^\prime(f(y))\}$,
and that for $x\in B$, the natural
transformation $\eta$ gives a morphism of coloured posets,
$$
\eta(x):=(f_{x}^{g(x)},\tau_{x}^{g(x)}):\xi(x)\rightarrow\xi'g(x),
$$
where 
$\tau_{x}^{g(x)}$ is the collection of morphisms $\{(\tau_{x}^{g(x)})_y\}_{y\in\xi(x)}$.
Define  $f:E\rightarrow E'$ by $f(y)=f_{x}^{g(x)}(y)$ for $y\in \xi(x)$, and 
$\ss_y= (\tau_{x}^{g(x)})_y$.

If $x\leq z\in B$ we have the commuting diagram,
$$
\xymatrix{
\xi(z) \ar[r]^{\eta(z)}  &\xi'g(z) \\
\xi(x) \ar[r]^{\eta(x)} \ar[u]^{(f_x^z,\tau_x^z)} &\xi'g(x) 
\ar[u]_{(f_{g(x)}^{g(z)},\tau_{g(x)}^{g(z)})}
}
$$
courtesy of the natural transformation $\eta$, from which one can deduce that 
$(f,\ss)$ is a morphism of coloured posets. Moreover, it preserves fibres.
Thus given a morphism  $(g,\eta)\colon \xi\ra\xi^\prime$  
we have assigned a morphism of coloured posets $\EE(g,\eta)= (f,\sigma)$. 
Since composition behaves well we have,

\begin{proposition}
$\EE:\bundr\rightarrow\cpr$ is a functor. 
\end{proposition}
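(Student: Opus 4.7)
The plan is to establish the two functor axioms — preservation of identities and preservation of composition — and to complete the verification, begun in the paragraph preceding the statement, that the pair $\EE(g,\eta)=(f,\sigma)$ is indeed a morphism in $\cpr$. That $\EE(\xi)$ is a coloured poset was already checked in the Lemma, so only the morphism-level bookkeeping remains.

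First I would finish checking that $(f,\sigma)$ is a morphism of coloured posets. Order preservation of $f$ inside a single fibre $E_x$ is immediate from the fact that the poset component $f_x^{g(x)}$ of $\eta(x)$ is a poset map. In the cross-fibre case, with $y\in E_x$ and $y'\in E_z$ satisfying $y\leq y'$, the naturality square
\[
\xi'(g(x)\leq g(z))\circ\eta(x)=\eta(z)\circ\xi(x\leq z)
\]
gives, at the level of underlying poset maps, $f_{g(x)}^{g(z)}\circ f_x^{g(x)}=f_z^{g(z)}\circ f_x^z$, whence
\[
f_{g(x)}^{g(z)}(f(y))=f_z^{g(z)}(f_x^z(y))\leq f_z^{g(z)}(y')=f(y'),
\]
so $f(y)\leq f(y')$ in $E'$ by Definition \ref{sec:bund:definition200}. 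The condition $f(y)=1_{E'}$ iff $y=1_E$ follows from the analogous property of $g$ together with the same property for each $\eta(x)$. The commuting square required of a coloured-poset morphism for an edge $y\leq y'$ of $E$ is either a direct instance of $\eta(x)$ being such a morphism (when $y,y'$ share a fibre) or a combination of that fact with the module-level part of the naturality square of $\eta$; in the second case it commutes by a diagram chase entirely analogous to the one carried out in the proof of the Lemma.

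For the identity axiom, if $(g,\eta)=(\id_B,\id_\xi)$ then each $\eta(x)$ is the identity of $\xi(x)$, so $f_x^{g(x)}=\id_{E_x}$ and each $(\tau_x^{g(x)})_y=\id_{\cF_x(y)}$; hence $\EE(\id_B,\id_\xi)=\id_{\EE(\xi)}$. For composition, given $(g,\eta)\colon\xi\ra\xi'$ and $(g',\eta')\colon\xi'\ra\xi''$, the composite has base $g'\circ g$ and the natural transformation whose $x$-component is $\eta'(g(x))\circ\eta(x)$. Unpacking the definition of $\EE$ shows that its poset map at $y\in E_x$ is $f_{g(x)}^{g'g(x)}\circ f_x^{g(x)}(y)$, which coincides with $\EE(g',\eta')\circ\EE(g,\eta)$ evaluated at $y$; the module-level component composes in the same way.

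The main obstacle is simply keeping the several layers of indices straight when assembling the naturality squares of $\eta$ with the coloured-poset axiom for each $\eta(x)$. No new ideas beyond those used in the proof of the Lemma are required.
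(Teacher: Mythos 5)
Your proposal is correct and takes essentially the same route as the paper: the paper defines $\EE$ on morphisms exactly as you do, appeals to the same naturality square of $\eta$ to see that $(f,\sigma)$ is a morphism of coloured posets, and then asserts that composition behaves well, leaving the identity and composition axioms (which you spell out) to the reader.
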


There is a homology of coloured posets and our main interest
in this paper is to compute it for the total coloured poset
of a given bundle. We now briefly recall the relevant definitions-for
more details see \cite{Everitt-Turner}.  Let $(P,\cF)$ be a coloured
poset and define a chain complex $\cS_*(P,\cF)$ by 
letting $\cS_k(P,\cF) = 0$ for $k<0$; for $k>0$ let
\begin{equation}
\cS_k(P,\cF) =\kern-2mm\bigoplus_{\substack{x_1x_2\cdots x_k\\ x_i\in P\setminus 1}}
\kern-2mm\cF(x_1),
\end{equation}
the direct sum over all multi-sequences $x_1\leq x_2\leq\cdots\leq x_k\in P\setminus 1$,
and $\cS_0(P,\cF) = \cF(1)$. The differential
$d_k\colon \cS_k(P,\cF) \ra \cS_{k-1}(P,\cF)$ is defined for $k>1$ by
\[
d_k(\lambda x_1x_2\cdots x_k)= \cF(x_1\leq x_2)(\lambda)x_2\cdots x_k 
- \sum_{i=2}^k(-1)^i\lambda x_1 \cdots \widehat{x}_i \cdots x_k,
\]
and $d_1$ is defined by $d_1(\lambda x) = \cF_{x}^{1}(\lambda).$
The homology $\hpc *$ of $(P,\cF)$ is then the homology of this complex,
and this gives a functor $H_*\colon \cpr \ra \grrmod$.

When working with homology it can be convenient to use a smaller complex than the one above. 
One defines $\cC_*(P,\cF)$
identically to $\cS_*(P,\cF)$, but with the additional requirement
that the 
indexing $x_1x_2\cdots x_k$ appearing above are now \emph{sequences\/} rather than multi-sequences, 
i.e. there are no repeats.
Clearly $\CC_k\subset\SS_k$, and the differential is taken to be the
restriction to $\CC_k$ of the differential on $\SS_k$ (so that
$\CC_*$ is a subcomplex of $\SS_*$). One can show that there is a homotopy equivalence of chain 
complexes $\cpc * \simeq \spf *$ as in \cite{Everitt-Turner}*{\S 2}.

Returning to the situation of 
a bundle $\xi:B\rightarrow\cpr$, these constructions give important colourings of the base.
Let $\EE(\xi)=(E,\cF)$. For each positive integer
$q\geq 0$ we have the \emph{$q$-chain colouring} $\SS_q$ of $B$: for
$x\in B$ let $\cS_q(x):=\cS_q(E_x,\FF_x)$, the module of $q$-chains in
the complex $\cS_*$ of the fibre $(E_x,\FF_x)$. Recalling from
\cite{Everitt-Turner}*{\S 2} that $\cS_*$ is a functor $\cS_*\colon
\cpr \ra \chr$, we have that if $x\leq y$ in $B$, then the composition of
$\xi\colon B \ra \cpr$ with $\cS_q$ induces a module homomorphism
$\cS_q(x)\rightarrow\cS_q(y)$, hence gives a colured poset $(B,\cS_q)$.

We can also colour the base using the \emph{homology of the fibres\/}.
Let $\xi:B\rightarrow\cpr$ with $\EE(\xi)=(E,\cF)$. 
For any $q$ compose $\xi\colon B \ra \cpr$ with homology 
$H_q\colon \cpr \ra \rmod$ to get a new functor $\cH^{\text{fib}}_q (\xi) = H_q \circ \xi$,
$$
\xymatrix{
\cH^{\text{fib}}_q (\xi)\colon B \ar[r]^-\xi & \cpr \ar[r]^-{H_q} & \rmod .
}
$$
This defines a colouring on $B$ and so a coloured poset $(B, \cH^{\text{fib}}_q(\xi))$.
If $(g,\eta):\xi\rightarrow\xi'$ is a bundle morphism and $x\in B$,
then $\eta(x):\xi(x)\rightarrow \xi'g(x)$ induces
a map in homology
$$
\eta(x)_*:H_q(\xi(x))\rightarrow H_q(\xi'g(x)),
$$
which in turn induces a 
morphism of coloured posets $(B,\cH_q^{\text{fib}}(\xi))\rightarrow (B',\cH_q^{\text{fib}}(\xi'))$.
Thus by taking homology of fibres we have a functor $ \bundr \ra \cpr$.

\section{A bicomplex and its total complex}
\label{section300}

Our main theorem is a Leray-Serre type spectral sequence for bundles.
Fundamental to its construction is
a certain bicomplex which we now describe. Let $\xi:B\rightarrow\cpr$ be a 
bundle with total coloured poset $\EE(\xi)=(E,\cF)$. From the previous section
we have an infinite family of coloured posets $(B,\cS_q)$, where the base 
has been $q$-chain coloured for each $q\geq 0$.

Now set 
\[
\cK_{p,q} = \cC_p(B, \cS_q).
\]

Explicitly, $\cC_p(B,\cS_q)$ is a direct sum of modules in the colouring of $B$ indexed by
length $p$ sequences in $B\setminus 1$, so a typical element is a sum
of $\mu x_1\ldots x_p$ with the 
$x_1<\cdots<x_p\in B\setminus 1$ and $\mu\in\cS_q(E_{x_1},\FF_{x_1})$.
Thus $\mu$ is in turn a sum over $\lambda y_1\ldots y_q$ with 
the $y_1\leq\cdots\leq y_q\in E_{x_1}\setminus 1_{x_1}$
and $\lambda\in\FF_{x_1}(y_1)$. We will thus write
$$
\lambda\x\y=\lambda x_1\ldots x_p y_1\ldots y_q,\hspace{1cm}\lambda\in\FF_{x_1}(y_1),
x_i\in B\setminus 1, y_j\in E_{x_1}\setminus 1_{x_1},
$$
for a typical (homogeneous) element of $\cC_p(B,\cS_q)$. 
Note that $\cK_{0,q}=\cC_0(B,\cS_q)=\cS_q(1)=\bigoplus\FF_1(y_1)$, the direct sum over
the multi-sequences $y_1\ldots y_q\in P_1$.

For fixed $q$ we have that 
$\cK_{*,q}= \cC_*(B,\cS_q)$ is a complex under the differential
$d^h:\cC_p(B,\cS_q)\rightarrow\cC_{p-1}(B,\cS_q)$ given by
\begin{align*}
d^h(\lambda x_1\ldots x_p y_1\ldots y_q)=&
(\tau_{x_1}^{x_2})_{y_1}(\lambda)x_2\ldots x_pf_{x_1}^{x_2}(y_1)\ldots f_{x_1}^{x_2}(y_q)\\
&-\sum_{i=2}^p(-1)^i\lambda x_1\ldots \widehat{x}_i\ldots x_p y_1\ldots y_q,
\end{align*}
where $(f_{x_1}^{x_2},\tau_{x_1}^{x_2}):(E_{x_1},\FF_{x_1})\rightarrow(E_{x_2},\FF_{x_2})$
is the induced coloured poset morphism. 

Fixing $p$ we have 
a complex under the differential 
$d^v:\cC_p(B,\cS_q)\rightarrow\cC_p(B,\cS_{q-1})$,
\begin{align*}
d^v(\lambda x_1\ldots x_p y_1\ldots y_q)=
(-1)^{p+q}&\biggl((\FF_{x_1})_{y_1}^{y_2}(\lambda)x_1\ldots x_py_2\ldots y_q\\
&-\sum_{j=2}^q(-1)^j\lambda x_1\ldots x_p y_1\ldots \widehat{y}_j\ldots y_q\biggr).
\end{align*}
It is tedious but straightforward to check that $d^vd^h+d^hd^v = 0$, so that,

\begin{lemma}
$\cK_{*,*}$ is a bi-complex.
\end{lemma}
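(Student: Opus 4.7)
The goal is to verify the three identities $d^h\circ d^h=0$, $d^v\circ d^v=0$, and $d^v\circ d^h+d^h\circ d^v=0$. The first two are essentially restatements of facts already available from the machinery of \cite{Everitt-Turner}, while the third is the real content and is what motivates the sign $(-1)^{p+q}$ inserted into the definition of $d^v$.

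\textbf{Identity $d^h\circ d^h=0$.} For fixed $q$ the complex $\cK_{*,q}$ is nothing other than $\cC_*(B,\cS_q)$, the chain complex of the coloured poset $(B,\cS_q)$. Indeed, recalling that $\cS_q\colon \cpr\ra\chr$ is a functor, the composition $\cS_q\circ\xi\colon B\ra\rmod$ is a colouring of $B$, and under this colouring the transition map $\cS_q(x)\ra\cS_q(z)$ induced by $x\leq z$ sends $\lambda y_1\cdots y_q$ to $(\tau_x^z)_{y_1}(\lambda)f_x^z(y_1)\cdots f_x^z(y_q)$, which is exactly the first summand of $d^h$. Thus $d^h$ coincides with the coloured-poset differential, so $d^h\circ d^h=0$ by the corresponding result for $\cC_*$ in \cite{Everitt-Turner}.

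\textbf{Identity $d^v\circ d^v=0$.} Fix an ordered tuple $x_1<\cdots<x_p$ in $B\setminus 1$. Restricted to the summand indexed by this tuple, $d^v$ agrees (up to the uniform sign $(-1)^{p+q}$) with the differential of $\cS_*(E_{x_1},\FF_{x_1})$ acting on $\lambda y_1\cdots y_q$. Since applying $d^v$ twice produces the product of signs $(-1)^{p+q}(-1)^{p+q-1}=-1$ multiplied by the square of that differential, which vanishes, we obtain $d^v\circ d^v=0$.

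\textbf{Identity $d^v\circ d^h+d^h\circ d^v=0$.} This is the main calculation and I would organize it by the types of terms produced. When both operators delete faces (one an $x_i$ with $i\ge 2$, and one a $y_j$ with $j\ge 2$, or the leading terms), the two compositions produce the same monomials but with opposite signs thanks to the factor $(-1)^{p+q}$, so they cancel pairwise in a fashion that parallels the proof that the simplicial chains form a chain complex. The genuinely interesting case is the cross term where the horizontal leading summand of $d^h$ (which applies $(\tau_{x_1}^{x_2})_{y_1}$ and $f_{x_1}^{x_2}$) is followed by the vertical leading summand of $d^v$ (which applies $(\FF_{x_2})_{f(y_1)}^{f(y_2)}$), compared with the reverse order. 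For these two monomials to cancel, one needs the identity
\[
(\FF_{x_2})_{f_{x_1}^{x_2}(y_1)}^{f_{x_1}^{x_2}(y_2)}\circ (\tau_{x_1}^{x_2})_{y_1}
\;=\;(\tau_{x_1}^{x_2})_{y_2}\circ (\FF_{x_1})_{y_1}^{y_2},
\]
which is exactly the naturality of $\tau_{x_1}^{x_2}$, i.e., the commuting square in the definition of a morphism of coloured posets. The remaining mixed terms (where one operator acts on the leading position and the other deletes an interior face) likewise match up after using the functoriality of $\FF_{x_1}$ and of $f_{x_1}^{x_2}$.

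\textbf{Main obstacle.} The only real difficulty is bookkeeping: there are roughly a dozen types of terms produced by each of $d^v d^h$ and $d^h d^v$, and one must match them in pairs and check that the signs, which come from the deletion index $i$ or $j$ together with the overall $(-1)^{p+q}$ shift, are opposite. The naturality of $\tau_{x_1}^{x_2}$ resolves the unique non-trivial pairing; all remaining pairings are formal.
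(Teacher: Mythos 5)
The paper does not actually supply a proof of this lemma: it writes ``It is tedious but straightforward to check that $d^vd^h+d^hd^v = 0$'' and moves on, so your argument is filling in exactly what the authors omit. Your three-part organization is correct, and the crucial observation — that the only non-formal cancellation in $d^vd^h + d^hd^v=0$ is the matching of the two ``leading'' summands, which requires the commuting square
\[
(\FF_{x_2})_{f_{x_1}^{x_2}(y_1)}^{f_{x_1}^{x_2}(y_2)}\circ (\tau_{x_1}^{x_2})_{y_1}
=(\tau_{x_1}^{x_2})_{y_2}\circ (\FF_{x_1})_{y_1}^{y_2},
\]
i.e.\ naturality of $\tau_{x_1}^{x_2}$ from the definition of a coloured-poset morphism — is precisely the right one. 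Your sign bookkeeping is also correct: if one writes $d^v|_{\cK_{p,q}}=(-1)^{p+q}\partial^v$ with $\partial^v$ the unsigned simplicial-style differential, then the unsigned operators $\partial^h$ and $\partial^v$ genuinely commute (by the naturality identity above, plus the disjointness of the other deleted indices), and the composite $d^vd^h$ picks up sign $(-1)^{p+q-1}$ while $d^hd^v$ picks up $(-1)^{p+q}$, giving anticommutation. Likewise your reduction of $d^hd^h=0$ to the identification $\cK_{*,q}=\cC_*(B,\cS_q)$ and of $d^vd^v=0$ to the differential of $\cS_*(E_{x_1},\FF_{x_1})$ are both correct. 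The argument is a sketch at the level of ``there are roughly a dozen types of terms,'' but all the ideas needed to carry it out are present and correctly deployed, which is more than the paper itself provides.
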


There is then an associated total complex $(\TT_*, d)$ where
$$
\TT_n(E,\FF):=\bigoplus_{p+q=n} \cK_{p,q}
$$
with $d=d^h+d^v$. From the general construction of a spectral sequence from a bi-complex
we have,

\begin{proposition}\label{prop:ssT}
Let $\xi$ be a bundle of coloured posets with base $B$ and total poset $(E,\cF)$. 
Then there is a spectral sequence
\[
E^2_{p,q} = H_p(B, \cH^{\text{{\rm fib}}}_q (\xi)) \Longrightarrow H_*(\cT_*(E,\cF))
\]
\end{proposition}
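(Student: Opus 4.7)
The plan is to apply the standard column-wise filtration of the total complex $\cT_*$ of the bicomplex $\cK_{*,*}$. Setting
$$F_p \cT_n = \bigoplus_{\substack{p' \leq p \\ p'+q=n}} \cK_{p',q},$$
the bicomplex is first-quadrant (both $p,q \geq 0$), so this filtration is bounded in each total degree and the resulting spectral sequence converges to $H_*(\cT_*(E,\cF))$ with no further hypotheses.

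With $d^0 = d^v$, the $E^1$-page is computed by vertical homology, one column at a time. For fixed $p$, the column $\cK_{p,*} = \cC_p(B,\cS_*)$ splits, as a complex under $d^v$, as a direct sum indexed by length-$p$ sequences $x_1 < \cdots < x_p$ in $B \setminus 1$ of the fibre chain complexes $\cS_*(E_{x_1},\cF_{x_1})$; the overall sign $(-1)^{p+q}$ in the formula for $d^v$ does not affect homology. Since direct sums commute with homology,
$$E^1_{p,q} = \bigoplus_{x_1 < \cdots < x_p \in B \setminus 1} H_q(E_{x_1},\cF_{x_1}) \;=\; \cC_p\bigl(B,\cH^{\text{fib}}_q(\xi)\bigr).$$

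The key step is to identify the induced differential $d^1$ with the horizontal differential of the coloured poset $(B,\cH^{\text{fib}}_q(\xi))$. The horizontal differential $d^h$ on $\cK_{p,q}$ has two pieces: the face-omission sum $\sum_{i \geq 2}(-1)^i \lambda x_1 \ldots \widehat{x}_i \ldots x_p y_1 \ldots y_q$, which descends unchanged to the analogous sum on homology classes; and the transfer term $(\tau_{x_1}^{x_2})_{y_1}(\lambda)\, x_2 \ldots x_p\, f_{x_1}^{x_2}(y_1) \ldots f_{x_1}^{x_2}(y_q)$, which is precisely the image of $\lambda y_1 \ldots y_q \in \cS_q(E_{x_1},\cF_{x_1})$ under the chain map $\cS_*(\xi(x_1 \leq x_2))$ induced by functoriality of $\cS_*$. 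On passing to $H_q$, the latter becomes the colouring morphism $\cH^{\text{fib}}_q(\xi)(x_1 \leq x_2)$, so $d^1$ coincides with the differential of $\cC_*(B,\cH^{\text{fib}}_q(\xi))$. Therefore
$$E^2_{p,q} = H_p\bigl(\cC_*(B,\cH^{\text{fib}}_q(\xi))\bigr) \;=\; H_p\bigl(B,\cH^{\text{fib}}_q(\xi)\bigr),$$
where the second equality uses the homotopy equivalence $\cpc{*} \simeq \spf{*}$ already recalled from \cite{Everitt-Turner}.

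The only non-routine point is the compatibility check between the induced $d^1$ and the coloured-poset differential in the third step. This is not deep, but it is where one must carefully track the defining formulas of $d^h$, the $q$-chain colouring $\cS_q$ of $B$, and the naturality of $\cS_*$ applied to the coloured poset morphism $\xi(x_1 \leq x_2)$; everything else is formal properties of the spectral sequence of a first-quadrant bicomplex.
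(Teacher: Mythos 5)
Your argument is correct, and it simply spells out what the paper treats as immediate: the paper offers no proof beyond the phrase ``from the general construction of a spectral sequence from a bi-complex,'' so your column-filtration computation of $E^1$ and $E^2$, including the identification of $d^1$ with the differential of $\cC_*(B,\cH^{\text{fib}}_q(\xi))$, is exactly the argument being implicitly invoked.
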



\begin{example}\label{ex:productsacyclic}
Let $B$ be a poset with a unique minimal element 0 satisfying $0<1$,
and let $(P,\cF)$ be a coloured poset. Then $\cT_*(B\times (P,\cF))$
is acyclic. To see this notice that $(B, \cH^\text{fib}_q(\xi))$ is a coloured
poset with a constant colouring (by $H_q(P,\cF)$) and thus, since $B$
has a 0, gives \cite{Everitt-Turner}*{Example 8} that the $E_2$-page
of the above spectral sequence is trivial.
\end{example}

\section{Admissible and specially admissible posets}\label{section400}

In \S\ref{sec:main} we will identify the $H_*(\cT_*(E,\FF))$ 
of \S\ref{section300}
with the $H_*(E,\FF)$ of \S\ref{sec:bund} 
for $(E,\cF)=\EE(\xi)$, where
$\xi$ is a bundle over a certain class of posets that we now introduce.
Let $P$ be a poset with $1$, and for $x\in P$ recall from \S\ref{sec:bund} 
the definition of the interval
$P(x)$ and its complement $\ov{P}(x)$. 

\begin{definition}\label{definition400}
A poset $P$ with $1$ is \emph{admissible\/} if and only if there exists 
an $x\prec 1$ in $P$ such that for
all $y\in P(x)\setminus x$, the subposet
$L(y):=\{z\in\ov{P}(x)\,|\,y\leq z\}$
has a unique minimal element $0_{L(y)}$.
\end{definition}

\begin{figure}
  \centering
\begin{pspicture}(12.5,5.5)
\rput(-2,.5){
\rput(3,2){\BoxedEPSF{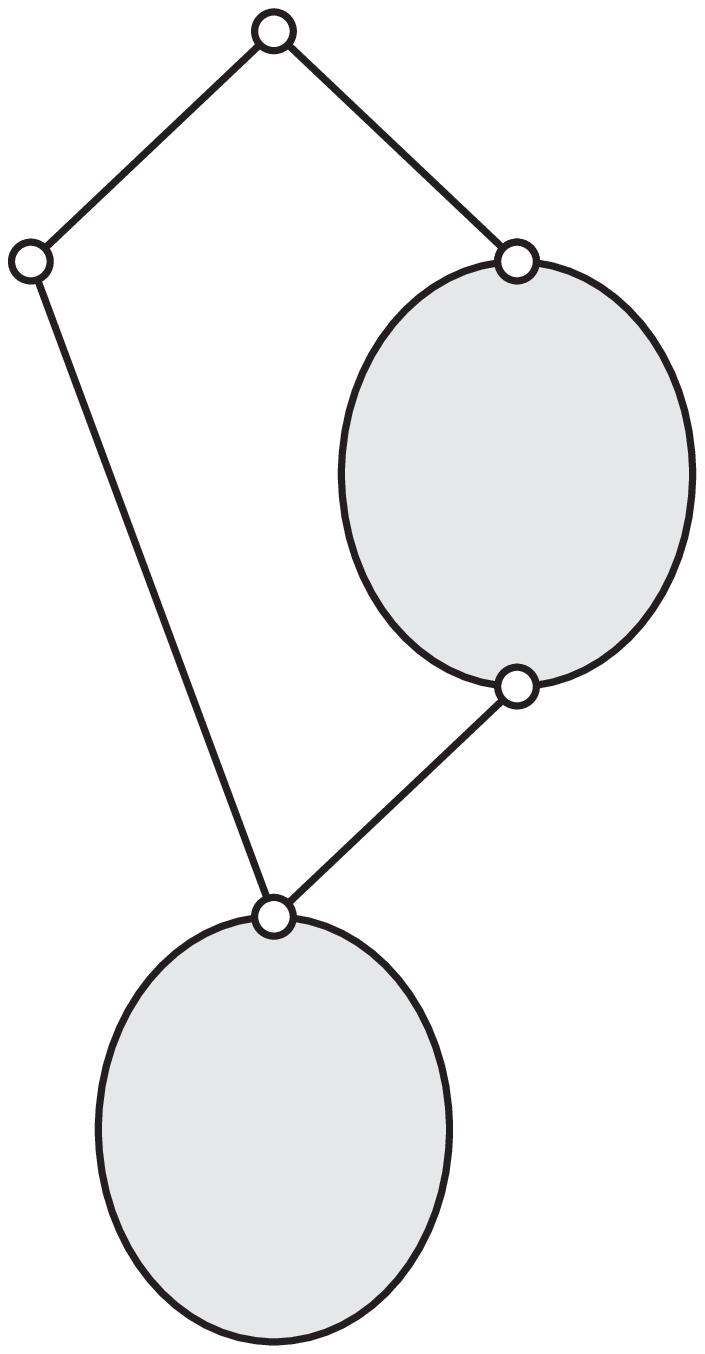 scaled 300}}
\rput(2.75,.6){$R$}\rput(3.5,2.6){$Q$}\rput(1.8,3.25){$x$}
}
\rput(4,2.5){\BoxedEPSF{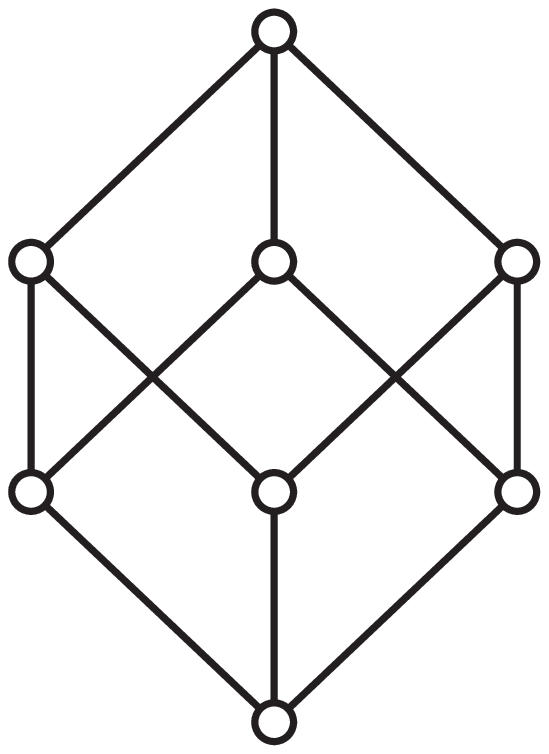 scaled 400}}
\rput(10,2.75){\BoxedEPSF{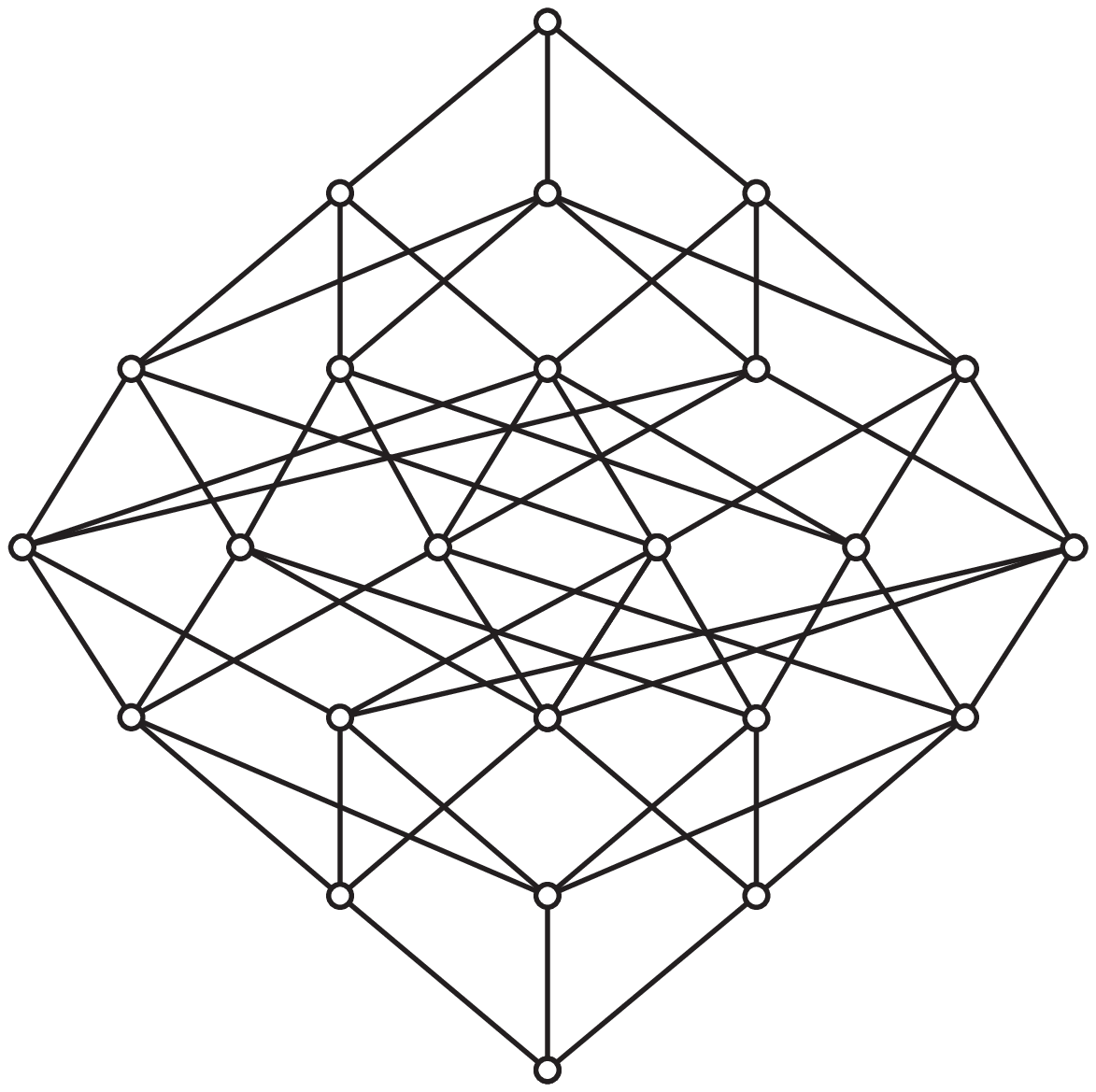 scaled 450}}
\rput(6.25,2.5){\BoxedEPSF{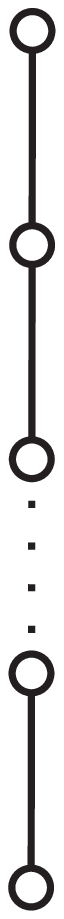 scaled 400}}
\end{pspicture}
  \caption{One of these posets is
non-admissible (from left to right): a concocted example where $R$ is any poset with
a $1$ and $Q$ is any poset with a $0<1$; 
the Boolean lattice of rank $3$;
the poset $\mathbf{n}=1<2<\cdots<n$;
the Bruhat poset of type $A_3$.}
  \label{figure400}
\end{figure}

On the far left of Figure \ref{figure400} we have an artificial
example of a poset, admissible courtesy of the $x$ shown. In general,
$L(y)$ has unique maximal element the $1$ of $P$ with
$0_{L(y)}<1$ by our standing poset convention, so $L(y)$ necessarily
contains at least two elements if $P$ is to be admissible. In
particular the poset $\mathbf{n}=1<2<\cdots<n$, with Hasse diagram
in Figure \ref{figure400} is non-admissible when
$n\geq 3$.

\begin{example}
For $\B(X)$ the Boolean lattice of all subsets of $X$ ordered by inclusion,
$|X|$ is the rank of $\B(X)$. Then $\B(X)$ is admissible for
$X\not=\varnothing$. This is vacuous in rank $1$ (where $\B(X)$ is isomorphic
to the poset $\mathbf{2}=1<2$). If $|X|>1$ then 
any subset $A\in\B(X)$ covered by $X$ is of the form $X\setminus x$ for some $x\in X$, and the
interval (also a Boolean lattice) $\B(A)$ consists of subsets of $X$
not containing $x$. For $Y\in \B(A) \setminus \{ A \}$ the set $L(Y)$
consists of subsets of $X$ containing the element $x$ and the elements
of $Y$. Thus $O_{L(Y)}= Y \cup \{ x \}$ is a unique minimal element
for $L(Y)$. The second example of Figure \ref{figure400} is Boolean of rank $3$.
\end{example}

\begin{example}
Let $(W,S)$ be a Coxeter group with length function
$\ell:W\rightarrow\Z^{\geq 0}$ and reflections $T=W^{-1}SW$ (see
\cite{Humphreys90} for basic facts about Coxeter
groups).  Write $w'\rightarrow w$ iff $w=w't$ for some $t\in T$ with
$\ell(w)>\ell(w')$.
Define $w'<w$ iff there is a sequence 
$w'=w_0\rightarrow w_1\rightarrow\cdots\rightarrow w_m=w$. The result is a graded 
(by $\ell$) poset with $0$ (the identity of $W$) called the \emph{Bruhat\/}
(-\emph{Chevelley}) poset of $W$. A simple example is the 
Bruhat poset of the Weyl group of type $A_1^n$, which is the Boolean lattice
of rank $n$. Two warnings: $w'\rightarrow w$ is \emph{not\/} the same as
$w'\prec w$, and $(W,\leq)$ is \emph{not\/} in general a lattice. 

If $W$ is finite then the Bruhat poset also has a $1$: the element of longest length
in $W$. 
It turns out that the Bruhat posets for finite $W$ are admissible, though we do not require that generality here.
For now, we leave it to the reader to check that
the Bruhat poset for the symmetric group $\gS_4$ (the Weyl group of type $A_3$),
which is the third example in Figure \ref{figure400}, is admissible for any
$x\prec 1$.
\end{example}

Suppose now that $\xi:B\rightarrow\cpr$ is a bundle with admissible base $B$ and
$(E,\FF)=\EE(\xi)$. The admissibility
of the base has the following useful consequence for $E$. 
Let $x\in B$, with interval $B(x)$, complement $\ov{B}(x)$, and restricted
bundles (see Example \ref{ex:int}) $\xi|_{B(x)}$, $\xi|_{\ov{B}(x)}$ with
$E(x)=\EE(\xi|_{B(x)})$ and $\ov{E}(x)=\EE(\xi|_{\ov{B}(x)})$.

\begin{lemma}\label{lemma400}
Let $B$ be an admissible poset courtesy of $x\prec 1_B$. Then for all
$y\in E(x)\setminus 1_{E(x)}$, the subposet $J(y)= \{z\in\ov{E}(x)\,|\,y\leq z\}$ has a 
unique minimal element.
\end{lemma}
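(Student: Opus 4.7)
The plan is to use the projection $\pi\colon E\to B$ to reduce the question about $J(y)$ to a question about the base poset, and then invoke admissibility of $B$. Given $y\in E(x)\setminus 1_{E(x)}$, I would first set $w=\pi(y)\in B(x)$, so $y\in E_w$ with either $w<x$, or else $w=x$ and $y\neq 1_{E_x}$. Unpacking Definition~\ref{sec:bund:definition200}, any $z\in J(y)$ must lie in some fibre $E_u$ with $u\in\ov{B}(x)$, $w\leq u$ in $B$, and $f_w^u(y)\leq z$ in $E_u$. In particular $\pi(J(y))\subseteq L(w):=\{u\in\ov{B}(x)\,|\,w\leq u\}$, and the natural candidate for the minimum is $z_0:=f_w^{u_0}(y)\in E_{u_0}$, where $u_0$ is a minimum of $L(w)$. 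So the whole problem reduces to producing such a $u_0$.

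This is where a small case split is needed. If $w<x$ then $w\in B(x)\setminus x$, and admissibility of $B$ (courtesy of the chosen $x$) directly supplies $u_0:=0_{L(w)}$. If instead $w=x$, then admissibility itself is silent, but the hypothesis $x\prec 1_B$ together with $1_B$ being the unique maximum of $B$ forces $L(x)=\{1_B\}$: any $u>x$ would satisfy $x<u\leq 1_B$, and the covering relation collapses this to $u=1_B$. So in this case take $u_0=1_B$.

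To finish, I would verify that $z_0$ is indeed the unique minimum of $J(y)$. Membership $z_0\in J(y)$ is immediate from $w\leq u_0$ and $f_w^{u_0}(y)=z_0$. For any $z\in J(y)$ with $\pi(z)=u$, the minimality of $u_0$ in $L(w)$ gives $u_0\leq u$, and the functoriality of $\xi$ (which yields $f_{u_0}^u\circ f_w^{u_0}=f_w^u$) combined with $f_w^u(y)\leq z$ gives $f_{u_0}^u(z_0)\leq z$ in $E_u$; by the partial order on $E$ this is precisely $z_0\leq z$. The main delicate point — more a matter of bookkeeping than a genuine obstacle — is the second case, where admissibility is not available because it is only postulated for elements strictly below $x$; it is the structural fact $x\prec 1_B$ that saves the day.
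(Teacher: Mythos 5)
Your proof is correct and follows the same route as the paper: project to the base via $\pi$, obtain the minimum $u_0$ of $L(\pi(y))$, and lift it to $f_{\pi(y)}^{u_0}(y)$; you then carry out the verification of minimality (via functoriality $f_{u_0}^{u}\circ f_{\pi(y)}^{u_0}=f_{\pi(y)}^{u}$) that the paper leaves to the reader. You also correctly flag and dispatch the edge case $\pi(y)=x$, where Definition \ref{definition400} does not literally apply since it only quantifies over $B(x)\setminus x$, so the paper's blanket appeal to ``admissibility'' is a slight abuse of language there and the covering relation $x\prec 1_B$ (forcing $L(x)=\{1_B\}$) is what is really doing the work.
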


\begin{proof}
We have $\pi(y)\in B(x)$ and
by admissibility, the subposet $L(\pi(y))$ has a unique minimal element $z_0\in\ov{B}(x)$.
The bundle furnishes us with a morphism of coloured posets 
$(E_{\pi(y)},\FF_{\pi(y)})\rightarrow (E_{z_0},\FF_{z_0})$, which is comprised in particular of 
a poset map $f_{\pi(y)}^{z_0}:E_{\pi(y)}\rightarrow E_{z_0}$. We leave it to the reader to
verify that the minimal element we seek is $f_{\pi(y)}^{z_0}(y)\in E_{z_0}$.
\end{proof}
 
Admissible posets are an important intermediary concept: the bundle results of 
\S\ref{sec:les} are for instance true when the base is admissible. For the main theorem
of \S\ref{sec:main} we require something stronger: that the base poset $B$ is admissible 
for some $x\prec 1$, and when 
split into the associated interval $B(x)$ and its complement $\ov{B}(x)$,
these two are also admissible,
and when these are split the results are admissible, and so on. Thus, we will
employ an inductive approach, resulting in a collection of posets scattered 
on the workbench, and we will need each piece to be admissible. The
definition is best formulated recursively:

\begin{definition}
A poset $P$ with $1$ is \emph{specially admissible\/} if and only if 
either
\begin{itemize}
\item $P$ is Boolean of rank $1$, or
\item $P$ is admissible for some $x\prec 1$ with $P(x)$ and $\ov{P}(x)$
specially admissible. 
\end{itemize}
\end{definition}

\begin{figure}
  \centering
\begin{pspicture}(12.5,2.5)
\rput(-1.25,0){
\rput(7.5,1.25){\BoxedEPSF{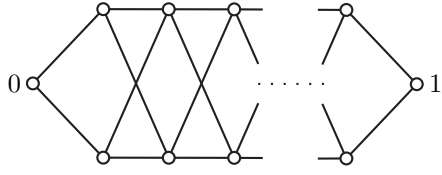 scaled 400}}
\rput(4.7,1.25){$0$}\rput(10.3,1.25){$1$}
}
\end{pspicture}
  \caption{Hasse diagram for the Bruhat poset of type $I_2(m)$, laid out on its
side with the ordering running from left to right.}
  \label{figure500}
\end{figure}

\begin{example}
We have already seen that the Boolean lattice of rank $n>0$ is
admissible (for any $x\prec 1$) with the $P(x)$ and
$\ov{P}(x)$ Boolean of rank $n-1$.  The Boolean
lattice of rank $1$ is specially admissible by definition, so Boolean
lattices of any rank are specially admissible by induction.
\end{example}

\begin{example}
Another simple family of specially admissible posets are the Bruhat posets
of type $I_2(m), m\geq 2$, which have Hasse diagrams (laid out sideways with order running
from left to right) as in Figure \ref{figure500}.
For any $x\prec 1$, the interval $P(x)$ is the Bruhat poset of type $I_2(m-1)$, and its 
complement $\ov{P}(x)$ is Boolean of rank $1$. As any two elements in $I_2(m)$ are
comparable, we have that for
$y\in P(x)\setminus x$, the poset $L(y)$ is also Boolean of rank $1$. Thus
$I_2(m)$ is admissible. The Bruhat poset of type $I_2(2)$
is the Boolean lattice of rank $2$, and so by induction, $I_2(m)$ is specially 
admissible.
\end{example}

\begin{example}
Looking back at Figure \ref{figure400}, one can see (after
quite an inspection) that the Bruhat poset $A_3$ is 
specially admissible. 
Indeed, 
it turns out that the Bruhat posets for finite $W$ are
specially admissible.
\end{example}

\section{Some exact sequences}\label{sec:les}

This section contains the technical tools needed for the proof of
the main theorem. As we will see in \S\ref{sec:main}, the key step is
the identification of the
homology of $\cT_*(E,\cF) $ with the
coloured poset homology $H_*(E,\cF)$ in the case where the 
base poset is specially admissible. To achieve this we will use an inductive
argument, splitting the base into two pieces and then using the two long
exact sequences presented below in Propositions \ref{prop:lest} and \ref{prop:less}.

Throughout this section let $B$ be a poset, admissible courtesy of some
$x\prec 1_B$. 
Let $\xi$ be a bundle of coloured posets with base $B$ and $\EE(\xi)=(E,\cF)$.
Let $B(x)$ be the interval from \S\ref{sec:bund} and 
$\ov{B}(x)$ its complement, with the associated restricted bundles having total posets
$(E(x), \cF)$ and $(\ov{E}(x), \cF)$. 

\begin{proposition}\label{prop:lest}
Let $\xi:B\rightarrow\cpr$ be a bundle of coloured posets with 
$B$ admissible, $(E,\cF)$ the total coloured poset and $\cT_*(E,\cF)$ the total complex of
\S$\ref{section300}$.
Then there is a long exact sequence,
$$
\xymatrix{
\cdots\ar[r]^-{\delta}
& { H_n(\cT_*(\oE(x), \cF))} \ar[r]
& { H_n (\cT_*(E, \cF))} \ar[r]
& { H_{n-1}(\cT_*(E(x), \cF))} \ar[r]^-{\delta}
& \cdots}
$$
\end{proposition}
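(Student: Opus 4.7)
The plan is to set up a short exact sequence of chain complexes and identify the homology of the quotient, then invoke the snake lemma.

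First I would verify that $\cT_*(\oE(x),\cF)$ sits as a subcomplex of $\cT_*(E,\cF)$. In terms of the bicomplex $\cK_{p,q}(E)=\cC_p(B,\cS_q)$, those chains $\lambda x_1\cdots x_p y_1\cdots y_q$ with $p=0$, or with $p\geq 1$ and $x_1\in\ov{B}(x)\setminus 1_B$, span a subbicomplex because the complement $\ov{B}(x)$ of the down-set $B(x)$ is upward closed in $B$: $d^h$ either keeps $x_1$ fixed (removal terms) or replaces it with a larger $x_2\in\ov{B}(x)$ (first term), and $d^v$ leaves the $\bx$-part untouched. This subbicomplex is precisely $\cK_{*,*}(\oE(x))$, giving the inclusion. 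Let $Q_*$ denote the quotient, whose $n$-chains (for $n\geq 1$) consist of chains with $p\geq 1$ and $x_1\in B(x)$. The snake lemma applied to $0\to\cT_*(\oE(x),\cF)\to\cT_*(E,\cF)\to Q_*\to 0$ yields the long exact sequence
\[
\cdots\to H_n(\cT_*(\oE(x),\cF))\to H_n(\cT_*(E,\cF))\to H_n(Q_*)\to H_{n-1}(\cT_*(\oE(x),\cF))\to\cdots.
\]

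The substantive step is to identify $H_n(Q_*)\cong H_{n-1}(\cT_*(E(x),\cF))$. The natural candidate is the map $\sigma:\cT_{n-1}(E(x),\cF)\to Q_n$ given by appending $x$ to the $\bx$-part, $\sigma(\lambda x_1\cdots x_p y_1\cdots y_q)=\pm\lambda x_1\cdots x_p x y_1\cdots y_q$ (with signs to be fixed), which indeed lands in $Q_n$ since $x_1\in B(x)\setminus x$ (or the appended $x$ itself is in $B(x)$ when $p=0$). Direct computation shows $d^v\sigma=-\sigma d^v$, while $d^h\sigma-\sigma d^h$ is, up to sign, the natural inclusion $\iota:\cT_*(E(x),\cF)\to Q_*$ that views a chain with all $x_i\in B(x)\setminus x$ as sitting in $Q_*$. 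To convert $\sigma$ into a genuine quasi-isomorphism, I would filter $Q_*$ by the number of indices $x_i$ lying in $\ov{B}(x)$; this filtration is $d$-stable because $d$ never increases that count. The bottom filtration piece, in which all $x_i\in B(x)$, splits according to whether $x$ itself appears in the sequence and (via the isomorphism that strips off a trailing $x$) is recognisable as the mapping cone of the identity on $\cT_{*-1}(E(x),\cF)$, delivering exactly the shift required. The higher filtration pieces (chains with at least one $x_i\in\ov{B}(x)$) are nullhomotopic by an argument using admissibility: for a chain whose transition from $B(x)$ to $\ov{B}(x)$ occurs between $x_k$ and $x_{k+1}$, one compares $x_{k+1}$ with the unique minimum $\hat{x}_k=0_{L(x_k)}$ guaranteed by Definition \ref{definition400}, and the chain homotopy that inserts $\hat{x}_k$ into the sequence when absent (and is zero otherwise) furnishes the contraction.

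Combining the identification $H_n(Q_*)\cong H_{n-1}(\cT_*(E(x),\cF))$ with the long exact sequence above yields the statement. The main obstacle is the quasi-isomorphism in the previous paragraph: the sign bookkeeping needed to confirm that the insertion-$\hat{x}_k$ homotopy commutes correctly with both $d^h$ and $d^v$, together with the compatibility of this contraction with the mapping-cone structure on the bottom filtration piece, account for the bulk of the work. Admissibility is essential here, since without the canonical minimum $\hat{x}_k$ from Definition \ref{definition400} the contracting homotopy on the higher pieces could not be defined unambiguously.
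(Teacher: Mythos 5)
Your setup is right: the short exact sequence $0\to\cT_*(\oE(x),\cF)\to\cT_*(E,\cF)\to Q_*\to 0$ and the goal of identifying $H_n(Q_*)\cong H_{n-1}(\cT_*(E(x),\cF))$ coincide with the paper. The gap lies entirely in how you propose to compute $H_*(Q_*)$, and it is a genuine one: the filtration by the number of indices $x_i$ in $\ov{B}(x)$ does not have the associated graded you describe.

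First, the bottom piece ($t=0$: all $x_i\in B(x)$) is \emph{not} the cone of the identity on $\cT_{*-1}(E(x),\cF)$, and in any case the cone of an identity is acyclic, which would contradict your own claim that it ``delivers exactly the shift required''. What one actually finds is that this piece is the cone of the \emph{quotient} map $\cT_{*-1}(E(x),\cF)\to\cT_{*-1}(E(x),\cF)/\cK_{0,*-1}(E(x))$ (the term in your $\sigma$ that strips the trailing $x$ from a chain $\lambda x\, y_1\cdots y_q$ lands in $\cK_{0,*}$, hence dies in $Q_*$). The homology of that cone is $H_{*-1}(E_x,\cF_x)$, not $H_{*-1}(\cT_*(E(x),\cF))$, and these differ in general.

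Second, the higher associated graded pieces are \emph{not} acyclic. Your proposed homotopy inserts $\hat{x}_k=0_{L(x_k)}$ into the $\ov{B}(x)$-suffix, which \emph{raises} the count $t$ by one; it therefore does not act as an operator on $F_t/F_{t-1}$ at all, so it cannot furnish a contraction of $E^0_{t,*}$. A concrete check: take $B=\B(\{1,2\})$, $x=\{2\}$, so $\ov{B}(x)=\{\{1\},\{1,2\}\}$. The only sequence in $Q_*$ with exactly one index in $\ov{B}(x)$ is $\varnothing<\{1\}$, and the induced differential on $E^0_{1,*}$ is just $d^v$; hence $E^0_{1,*}\cong\cS_{*-2}(E_\varnothing,\cF_\varnothing)$, whose homology is $H_{*-2}(E_\varnothing,\cF_\varnothing)$, generically nonzero. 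There is also a subtler problem lurking even if you tried to run the homotopy on $Q_*$ itself: when $d$ removes the last $B(x)$-element $x_k$, the minimum you must insert changes from $0_{L(x_k)}$ to $0_{L(x_{k-1})}$, and the corresponding terms of $dh$ and $hd$ do not cancel.

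The paper circumvents both difficulties by splitting $Q'_*$ according to whether $x$ occurs as the last $B(x)$-element: the subcomplex $D'_*$ (those with $x_i\neq x$) and the quotient $A'_*$. Because $x\prec 1_B$, the quotient $A'_*$ has empty $\bz$-part and is literally $\cT_{*-1}(E(x),\cF)$ after stripping $x$. Acyclicity of $D'_*$ is proved by filtering by the \emph{$B(x)$-prefix length} $s$, so that on the $E^0$ page the prefix $\bx$ is frozen; then each summand $U^\bx$ is $\cT_{*-s}(L(x_s)\times E_{x_1})$, and admissibility (the unique minimum of $L(x_s)$) is used precisely through the product-bundle acyclicity of Example \ref{ex:productsacyclic}. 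Freezing the prefix is what makes the ``insert $0_{L(x_s)}$'' idea usable, and is exactly the step missing from your filtration.
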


\begin{proof}
It is immediate from the definition of the differential that $ \cT_*(\oE(x), \cF)$ is a 
subcomplex of  $ \cT_*(E, \cF)$ and thus for some quotient 
$Q^\prime_*$ there is a short exact sequence
\begin{equation}\label{eq:sest}
\xymatrix{
0  \ar[r]
& \cT_*(\oE(x), \cF) \ar[r]
& \cT_*(E, \cF) \ar[r]
& Q^\prime_* \ar[r]
& 0.
}
\end{equation}
Explicitly, $Q^\prime_* $ can be described as
$$
Q^\prime_n = \bigoplus_{\bx\bz\by} \cF_{x_1}(y_1),
$$ 
the sum over $\bx\bz\by = x_1\ldots x_iz_1 \ldots z_jy_1\ldots y_k$ with $i>0$
and $i+j+k=n$, and where $x_1\ldots x_i$ is a sequence in $B(x)$, $z_1
\ldots z_j$ a sequence in $\ov{B}(x)\setminus 1$, and $y_1\ldots y_k$ a
multi-sequence in $E_{x_1}\setminus 1_{E_{x_1}}$. Here, as elsewhere $E_{x_1}$
is the fibre over $x_1$. Notice that the
condition $i>0$ means that the sequence $ x_1\ldots x_iz_1 \ldots z_j$
must start in $B(x)$.  
When $i>1$ the differential in $Q'_*$ sends $\lambda\x\y\z$ to,
\begin{align*}
(\tau_{x_1}^{x_2}&)_{y_1}(\lambda) x_2\ldots x_iz_1\ldots z_jf_{x_1}^{x_2}(y_1)
\ldots f_{x_1}^{x_2}(y_k)\\
&-\sum_{s=2}^i(-1)^s\lambda x_1\ldots\widehat{x}_s\ldots x_iz_1\ldots z_jy_1\ldots y_k\\
&\quad -\sum_{t=1}^j(-1)^{i+t}\lambda x_1\ldots x_iz_1\ldots\widehat{z}_t\ldots z_jy_1\ldots 
y_k\\
& \quad\quad +(-1)^n(\cF_{x_1})_{y_1}^{y_2}(\lambda)  x_1\ldots x_iz_1 \ldots z_jy_2\ldots y_k\\
&\quad\quad\quad -\sum_{\ell=2}^k(-1)^{\ell+n}\lambda x_1\ldots x_iz_1\ldots z_jy_1
\ldots\widehat{y}_\ell\ldots y_k,
\end{align*}
and for $\lambda xz_1\ldots z_jy_1\ldots y_k$ the image is
\begin{align*}
\sum_{t=1}^j (-1&)^{t} \lambda  xz_1 \ldots \widehat{z}_t \ldots z_jy_1\ldots y_k\\
& + (-1)^{n}(\cF_{x})_{y_1}^{y_2}(\lambda)  xz_1 \ldots z_jy_2\ldots y_k\\
& \quad -\sum_{\ell=2}^k (-1)^{\ell+n} \lambda  xz_1 \ldots z_jy_1\ldots  \widehat{y}_\ell \ldots y_k.
\end{align*}

Now let $D^\prime_*$ be the submodule of $Q^\prime_*$ 
consisting of the modules indexed by the $\bx\bz\by
= x_1\ldots x_iz_1$ $\ldots z_jy_1\ldots y_k$ with $x_i\neq x$. In
other words, the last element of the sequence in $B(x)$ is not the
maximum element $x$ of this interval. 
It is immediate that  $D^\prime_*$ is a subcomplex of $Q'_*$ and thus for the
quotient $A^\prime_*$ there is a short exact sequence
\begin{equation}\label{eq:sesdqa}
\xymatrix{
0  \ar[r]
& D^\prime_* \ar[r]
& Q^\prime_* \ar[r]^-{q'}
& A^\prime_* \ar[r]
& 0.
}
\end{equation}

We show that $D^\prime_*$ is acyclic, and this allows us to
finish the proof easily: one sees that 
$$
\pi':A^\prime_{i+k} \rightarrow
\cT_{i+k-1}(E(x), \cF)
$$ 
given by 
$\lambda x_1\ldots x_{i-1}xy_1\ldots y_k\mapsto\lambda x_1\ldots x_{i-1}y_1\ldots y_k$
is an isomorphism,
and so using the long exact sequence associated
to (\ref{eq:sesdqa}), we have $H_n(Q^\prime_*) \cong H_{n-1}(
\cT_{*}(E(x), \cF))$. Plugging this into the long exact sequence associated 
to (\ref{eq:sest}) gives the result.

The remainder of the proof is thus devoted to showing that $D^\prime_*$ is acyclic. 
We filter  $D^\prime_*$ as follows:
$$
F_sD^\prime_n = \bigoplus_{\bx\bz\by} \cF_{x_1}(y_1)
$$ 
where the sum is over the $\bx\bz\by = x_1\ldots x_iz_1 \ldots z_jy_1\ldots
y_k$ satisfying $1\leq i \leq s$. 
There is an associated spectral sequence converging to $H_*(D^\prime_*)$ having
$$
E^0_{s,t} =  \bigoplus_{\bx\bz\by} \cF_{x_1}(y_1),
$$ 
where the sum is over the $\bx\bz\by = x_1\ldots x_sz_1 \ldots z_jy_1\ldots
y_k$ with $j+k=t$. The differential $d^0$ sends 
$\lambda x_1\ldots x_sz_1 \ldots z_jy_1\ldots y_k$ to
\begin{align*}
-\sum_{i=1}^j (-1&)^{s+i} \lambda  x_1\ldots x_sz_1 \ldots \widehat{z}_i \ldots z_jy_1\ldots y_k\\
& + (-1)^n(\cF_{x_1})_{y_1}^{y_2}(\lambda)  x_1\ldots x_sz_1 \ldots z_jy_2\ldots y_k\\
&\quad -\sum_{i=2}^k (-1)^{n+i} \lambda  x_1\ldots x_sz_1 \ldots z_jy_1\ldots  \widehat{y}_i \ldots y_k.
\end{align*}
Given $\bx = x_1 \ldots x_s$ a sequence in $B(x) \setminus x$ we define 
$U^{\x}_* \subseteq E^0_{s,*}$ to be the submodule of elements of the 
form $\lambda x_1\ldots
x_sz_1 \ldots z_jy_1\ldots y_k$, where the sequence $x_1\ldots x_s$ is the
given one $\x$. Endowing this with the differential gives
a subcomplex.

Let $L(x_s)$ be the subposet of $B$ with elements
$\{ z\in \ov{B}(x) \mid x_s\leq z \}$ as in Definition \ref{definition400},
and having a unique minimal element courtesy of 
the admissibility of $B$.
By direct comparison of the definitions on both sides we see that there is an isomorphism 
of complexes 
$$
U^{\bx}_* \cong \cT_{*-s}(L(x_s) \times E_{x_1}),
$$
and by Example \ref{ex:productsacyclic}, the right hand side (and hence $U^{\bx}_*$),
is acyclic.

We now show that any cycle in $E^0_{s,*}$ is a boundary. 
A typical cycle has the form $\sigma = \sum \lambda_{\bx\bz\by} \bx \bz \by$ 
and can be decomposed as
\[
\sigma = \sum \sigma^{\bx},
\]
where $\sigma^{\bx} = \sum_{\bx^\prime = \bx}
\lambda_{\bx^\prime\bz\by} \bx^\prime \bz \by$. Now, 
if $\bx\bz\by$ and
$\bx^\prime\bz^\prime\by^\prime$ are two sequences occuring in
$\sigma$ and $\bx\neq \bx^\prime$ then $d^0(\bx\bz\by)$ and
$d^0(\bx^\prime\bz^\prime\by^\prime)$ are in disjoint summands of
$d(\sigma)$. Thus $d^0(\sigma)=0$ implies that $d^0(\sigma^\bx)=0$ for
all $\sigma^\bx$. In other words, as $\ss$ is a cycle, each $\sigma^\bx$ is a cycle. 
Clearly we have $\sigma^\bx \in U^\bx_*$ and hence since $U^\bx_*$ is
acyclic, there exists $\tau^\bx$ such that
$d^0(\tau^\bx) = \sigma^\bx$. Thus $d^0(\sum \tau^\bx) = \sum
\sigma^\bx = \sigma$ and $\sigma$ is a boundary as claimed.

Thus, for each $s$ the complex $E^0_{s,*}$ is acyclic and hence
the $E^1$-page of the spectral sequence is trivial, showing that
$D^\prime_*$ is acyclic as required.
\end{proof}

In the proof of the main theorem in \S\ref{sec:main}, we will find it more convenient
to use 
$$
(-1)^k\pi':\lambda x_1\ldots x_{i-1}xy_1\ldots y_k\mapsto(-1)^k\lambda x_1\ldots x_{i-1}y_1\ldots y_k
$$ 
as an isomorphism $A_*'\rightarrow\cT_*(E(x),\cF)$.
We leave it to the reader to check that this is indeed an isomorphism.

There is a similar long exact sequence for the homology of the total coloured
poset $(E,\FF)$. It is a generalization of \cite{Everitt-Turner}*{Theorem 1}
from bases that are Boolean of rank $1$ to bundles over admissible bases.
The proof is very similar to the last proposition so we will be briefer.

\begin{proposition}\label{prop:less}
Let $\xi:B\rightarrow\cpr$ be a bundle of coloured posets with $B$ admissible and
total coloured poset $(E,\cF)$. 
Then there is a long exact sequence,
$$
\xymatrix{
\cdots \ar[r]^-{\delta}
& {H_n(\oE(x), \cF)} \ar[r]
& {H_n (E, \cF)} \ar[r]
& {H_{n-1}(E(x), \cF)} \ar[r]^-{\delta}
& \cdots}
$$
\end{proposition}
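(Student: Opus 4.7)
The plan is to imitate the strategy of Proposition \ref{prop:lest}, replacing the bicomplex total complex $\cT_*$ throughout by the reduced coloured poset complex $\cC_*$ of \cite{Everitt-Turner}*{\S 2}, which computes $H_*(P,\cF)$ via the homotopy equivalence $\cC_*\simeq\cS_*$. The long exact sequence will arise as the one associated to a suitable short exact sequence of chain complexes.

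Since $\ov{B}(x)$ is upward closed in $B$, the set $\ov{E}(x)$ is upward closed in $E$, so chains supported on $\ov{E}(x)\setminus 1_E$ form a subcomplex of $\cC_*(E,\cF)$ naturally isomorphic to $\cC_*(\ov{E}(x),\cF)$. Let $Q_*$ be the quotient. The downward-closure of $E(x)$ forces a basis chain of $Q_n$ to have the shape $\lambda y_1\ldots y_iz_1\ldots z_j$ with $i\geq 1$, $i+j=n$, the $y$'s a strictly increasing sequence in $E(x)$, and the $z$'s a strictly increasing sequence in $\ov{E}(x)\setminus 1_E$. Mirroring the earlier proof, I let $D_*\subseteq Q_*$ be spanned by those chains with $y_i\neq 1_{E_x}$; a direct check of the differential shows $D_*$ is a subcomplex, and the quotient $A_*$ is spanned by chains for which $y_i=1_{E_x}$.

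The identification of $A_*$ will use the following observation: no $z\in\ov{E}(x)\setminus 1_E$ satisfies $z>1_{E_x}$ in $E$. Indeed $z>1_{E_x}$ forces $\pi(z)>x$ in $B$, and since $x\prec 1_B$ this leaves only $\pi(z)=1_B$, whence $z\geq f_x^{1_B}(1_{E_x})=1_E$, a contradiction. Hence $A_n$ is actually spanned only by chains $\lambda y_1\ldots y_{n-1}1_{E_x}$ with the $y$'s in $E(x)\setminus 1_{E_x}$, and the assignment $\lambda y_1\ldots y_{n-1}1_{E_x}\mapsto\lambda y_1\ldots y_{n-1}$ will be readily seen to give an isomorphism of chain complexes $A_*\cong \cC_{*-1}(E(x),\cF)$; the only term in the differential on $A_*$ not accounted for by $\cC_{*-1}(E(x),\cF)$ is the one removing $1_{E_x}$ itself, which lands in $D_*$ and so vanishes.

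The main obstacle is proving that $D_*$ is acyclic, for which I would copy the filtration argument of Proposition \ref{prop:lest}. Filter $D_*$ by the length $s=i$ of the $E(x)$-prefix; on the associated graded $E^0_{s,*}$ the induced differential removes only $z$-entries, since both removing a $y$-entry and the colouring ``first term'' strictly shorten the prefix. Splitting by the fixed prefix $\by=y_1\ldots y_s$, the subcomplex $U^\by_*\subseteq E^0_{s,*}$ is freely generated over $\cF(y_1)$ by the increasing $z$-chains in $J(y_s)\setminus 1_E$, where $J(y_s)=\{z\in\ov{E}(x)\mid y_s\leq z\}$. By Lemma \ref{lemma400}, $J(y_s)$ has a unique minimum, and $1_E$ is its unique maximum, so $J(y_s)$ is a poset with $0<1$; up to an overall sign $(-1)^s$, $U^\by_*$ is then the $\cC_*$-complex of $J(y_s)$ with constant colouring $\underline{\cF(y_1)}$, which is acyclic by \cite{Everitt-Turner}*{Example 8}. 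Hence $E^1=0$ and $D_*$ is acyclic. Feeding the resulting isomorphism $H_n(Q_*)\cong H_{n-1}(E(x),\cF)$ into the long exact sequence of $0\to\cC_*(\ov{E}(x),\cF)\to\cC_*(E,\cF)\to Q_*\to 0$ yields the claim.
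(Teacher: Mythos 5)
Your proof is correct and follows essentially the same strategy as the paper: a short exact sequence onto a quotient $Q_*$, a further quotient by an acyclic $D_*$ identified with a shifted chain complex of $E(x)$, with acyclicity of $D_*$ proved by filtering on the length of the $E(x)$-prefix and invoking Lemma~\ref{lemma400} together with the acyclicity of constantly coloured posets with $0$. The one point of departure is that you work with the reduced complex $\cC_*$ (strict chains) throughout, whereas the paper uses $\cS_*$ (multi-sequences); this is a mild but genuine improvement, since strictness guarantees that $y_{n-1}\neq 1_{E_x}$ automatically, so your identification $A_*\cong\cC_{*-1}(E(x),\cF)$ is clean, whereas with multi-sequences one must be careful about repeated occurrences of $1_{E(x)}$ at the end of the prefix. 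Your explicit observation that no $z\in\ov{E}(x)\setminus 1_E$ exceeds $1_{E_x}$ (using $x\prec 1_B$) is also left implicit in the paper, and spelling it out is welcome.
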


\begin{proof}
 There is a short exact sequence
\begin{equation}\label{eq:sess}
\xymatrix{
0  \ar[r]
& \cS_*(\oE(x), \cF) \ar[r]
& \cS_*(E, \cF) \ar[r]
& Q_* \ar[r]
& 0,
}
\end{equation}
and as above we can describe $Q_* $ explicitly as 
$$
Q_n = \bigoplus_{\bx\bz} \cF(x_1),
$$ 
where $\bx\bz = x_1\ldots x_iz_1 \ldots z_j$ with $i>0$,
$i+j=n$ and $ x_1\ldots x_i$ a multi-sequence in $E(x)$ and $z_1
\ldots z_j$ a multi-sequence in $\oE(x)\setminus 1_E$. 

Now let $D_*$ be the subcomplex of $Q_*$ 
consisting of the modules indexed by the $\bx\bz
= x_1\ldots x_iz_1$ $\ldots z_j$ such that $x_i\neq 1_{E(x)}$,
and $A_*$ the quotient of $Q_*$ by $D_*$ with quotient map $q$. 
We show, as above, that $D_*$ is acyclic, so that if 
$$
\pi:A_{i+j}\rightarrow\cS_{i+j-1}(E(x),\cF)
$$
is the isomorphism $\lambda x_1\ldots x_{i-1}1_{E(x)}z_1\ldots z_j
\mapsto \lambda x_1\ldots x_{i-1}z_1\ldots z_j$, then 
$\pi q$ is a quasi-isomorphism from $Q_*$to $\cS_{*-1}(E(x),\cF)$.

Filter $D_*$ by
$$
F_sD_n = \bigoplus_{\bx\bz} \cF (x_1)
$$ 
where the sum is over the $\bx\bz = x_1\ldots x_iz_1 \ldots z_j$ 
satisfying $1\leq i \leq s$. 
The associated spectral sequence has
$$
E^0_{s,t} =  \bigoplus_{\bx\bz} \cF (x_1)
$$ 
where the sum is over $\bx\bz  = x_1\ldots x_sz_1 \ldots z_t$. 
The differential $d^0$ is as follows (where the
overall sign is given by the parity of $s$):
\begin{eqnarray*}
\pm d^0 ( \lambda x_1\ldots x_sz_1 \ldots z_t) & = & 
\sum_{k=1}^t (-1)^i \lambda  x_1\ldots x_sz_1 \ldots \widehat{z}_i \ldots z_t.
\end{eqnarray*}
Fixing $\bx = x_1 \ldots x_s$ a multi-sequence in $E_x \setminus 1_{E(x)}$, we
define $V^{\bx}_* \subseteq E^0_{s,*}$ to be the subcomplex of
elements of the form $\lambda x_1\ldots x_sz_1 \ldots z_j$ where the
multi-sequence $x_1\ldots x_s$ is the given one $\x$. Let
$J(x_s)$ be the subposet of $E$ with elements $ \{ z\in \oE_x \mid x_s
\leq z\}$.  By Lemma \ref{lemma400}, $J(x_s)$ has a 0 and 
$V^{\bx}_* \cong \cS_{*-s}(J(x_s), \cF(x_1))$. Since the poset $J(x_s)$
is coloured constantly by $\cF(x_1)$, we conclude by 
\cite{Everitt-Turner}*{Example 8 and Corollary 1} that
$V^{\bx}$ is acyclic.
The remainder of the proof is more or less identical to the previous
proposition.
\end{proof}

\section{A Leray-Serre spectral sequence}\label{sec:main}

Here is the main theorem of the paper.

\begin{theorem}\label{thm:main}
Let $\xi:B\rightarrow\cpr$ be a bundle of coloured posets with $B$ finite and
specially admissible and 
$(E,\cF)$ the associated total coloured poset. Then there is a spectral sequence
\[
E^2_{p,q} = H_p(B, \cH^{\text{{\rm fib}}}_q (\xi)) \Longrightarrow H_*(E,\cF).
\]
\end{theorem}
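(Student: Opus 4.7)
By Proposition \ref{prop:ssT}, the bicomplex of \S\ref{section300} yields a spectral sequence with $E^2_{p,q}=H_p(B,\cH^{\text{fib}}_q(\xi))$ converging to $H_*(\cT_*(E,\cF))$. The theorem therefore reduces to the natural identification $H_*(\cT_*(E,\cF)) \cong H_*(E,\cF)$, which I will prove by induction on $|B|$, exploiting the recursive structure built into special admissibility.

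For the base case $B$ is Boolean of rank $1$, i.e.\ $B = \{0 < 1\}$, and the bicomplex $\cK_{*,*}$ collapses to two columns. Direct inspection identifies $\cT_*(E,\cF)$ (up to the sign conventions of \S\ref{section300}) with the mapping cone of the chain map $\cS_*(E_0,\cF_0) \to \cS_*(E_1,\cF_1)$ induced by $\xi(0 \leq 1) = (f_0^1, \tau_0^1)$. On the other hand, $(E,\cF)$ is precisely the gluing of $(E_0,\cF_0)$ and $(E_1,\cF_1)$ along $(f_0^1, \tau_0^1)$, so by \cite{Everitt-Turner}*{Theorem 1} its homology fits into the long exact sequence of this same mapping cone; the identification follows.

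For the inductive step, let $x \prec 1_B$ exhibit the special admissibility of $B$, so that $B(x)$ and $\ov{B}(x)$ are again specially admissible and (since one omits $1_B$ and the other omits $x$) strictly smaller than $B$. Applying the inductive hypothesis to the restricted bundles $\xi|_{B(x)}$ and $\xi|_{\ov{B}(x)}$ gives
$$
H_*(\cT_*(E(x),\cF)) \cong H_*(E(x),\cF),\qquad H_*(\cT_*(\ov{E}(x),\cF)) \cong H_*(\ov{E}(x),\cF).
$$
Propositions \ref{prop:lest} and \ref{prop:less} then provide long exact sequences built from exactly these groups, and the plan is to arrange them into a commutative ladder
$$
\xymatrix@C=0.4cm{
\cdots \ar[r] & H_n(\cT_*(\ov{E}(x),\cF)) \ar[r]\ar[d]^{\cong} & H_n(\cT_*(E,\cF)) \ar[r]\ar[d] & H_{n-1}(\cT_*(E(x),\cF)) \ar[r]\ar[d]^{\cong} & \cdots \\
\cdots \ar[r] & H_n(\ov{E}(x),\cF) \ar[r] & H_n(E,\cF) \ar[r] & H_{n-1}(E(x),\cF) \ar[r] & \cdots
}
$$
in which the outer vertical arrows are the induction-hypothesis isomorphisms, and then to apply the Five Lemma to force the middle arrow to be an isomorphism as well.

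The main obstacle is producing the middle vertical map, i.e.\ a natural chain map $\Phi \colon \cT_*(E,\cF) \to \cS_*(E,\cF)$ that (i) restricts to the analogous map for $\xi|_{\ov{B}(x)}$ on the subcomplex $\cT_*(\ov{E}(x),\cF)$ featuring in the short exact sequence (\ref{eq:sest}) from the proof of Proposition \ref{prop:lest}, and (ii) descends, through the acyclic-quotient identifications used in the proofs of Propositions \ref{prop:lest} and \ref{prop:less}, to the analogous map for $\xi|_{B(x)}$. A candidate for $\Phi$ on a basis element $\lambda x_1\ldots x_p y_1\ldots y_q \in \cK_{p,q}$ is a signed chain in $E$ beginning with $y_1 \leq \cdots \leq y_q$ in the fibre $E_{x_1}$ and then walking through the fibres $E_{x_2},\ldots,E_{x_p},E_{1_B}$ via the transition images $f_{x_1}^{x_i}(y_q)$ and $f_{x_1}^{1_B}(y_q)$, giving a multi-sequence of the required length $p+q$. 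Checking that $\Phi$ is a chain map, that the sign conventions of \S\ref{section300} and of the proofs of Propositions \ref{prop:lest} and \ref{prop:less} conspire to make the relevant squares commute, and that naturality pushes both restriction and quotient compatibility through to match the inductive ladder, is a direct if delicate computation; this bookkeeping of signs is what I expect to be the technical heart of the argument. Once completed, the Five Lemma closes the induction and Proposition \ref{prop:ssT} delivers the stated spectral sequence.
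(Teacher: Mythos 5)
Your overall strategy matches the paper's: reduce via Proposition \ref{prop:ssT} to showing a quasi-isomorphism $\cT_*(E,\cF)\to\cS_*(E,\cF)$, then argue by induction on $|B|$, splitting at the $x\prec 1_B$ witnessing special admissibility, threading the long exact sequences of Propositions \ref{prop:lest} and \ref{prop:less} into a ladder, and closing with the Five Lemma. The gap is in your candidate map $\Phi$. As described --- sending $\lambda x_1\ldots x_p y_1\ldots y_q$ to the \emph{single} multi-sequence $\pm\lambda\, y_1\cdots y_q\, f_{x_1}^{x_2}(y_q)\cdots f_{x_1}^{x_p}(y_q)\, f_{x_1}^{1_B}(y_q)$ --- this is not a chain map, and no sign fix repairs it. Already for $p=q=1$: the $d^v$ part of $d_\cT(\lambda x y)$ contributes $(\cF_x)_y^{1_x}(\lambda)\,x\in\cK_{1,0}$, which under $\Phi$ lands on $(\cF_x)_y^{1_x}(\lambda)\,1_x$, whereas $d_\cS$ applied to the monomial $\pm\lambda\, y\, f_x^{1_B}(y)$ gives only $\pm\bigl[(\tau_x^{1_B})_y(\lambda)\, f_x^{1_B}(y) - \lambda\,y\bigr]$, with no $1_x$-term at all.

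The correct map $\phi$ constructed in the paper sends $\lambda\x\y$ to a signed \emph{sum over all $\binom{p+q}{p}$ monotone lattice paths} through the $(p+1)\times(q+1)$ grid of transition images $y_{ij}=f_{x_1}^{x_i}(y_j)$ (an Eilenberg--Zilber/shuffle-type map), with signs $(-1)^{\alpha(q)+m(\z)}$ where $m(\z)$ counts grid squares lying below-right of the path $\z$. Your candidate is precisely the single extreme path (up the first column, then along the top row); the remaining monomials are exactly what makes $d_\cS\phi = \phi d_\cT$ hold, since the "corner" omissions in $d_\cS$ of paths around each square cancel in pairs. Note also that the paper observes $\phi$ is determined uniquely, degree by degree, once it is the identity in total degree $0$ --- so a one-monomial formula cannot be salvaged. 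Beyond this, the proof also needs the subcomplex compatibility $\phi(\cT_*(\ov{E}(x),\cF))\subset\cS_*(\ov{E}(x),\cF)$ and a sign-adjusted identification $(-1)^k\pi'\colon A'_*\to\cT_*(E(x),\cF)$ to make the quotient squares in your ladder commute; these details ride on the grid structure as well. Your base-case argument via the mapping cone and \cite{Everitt-Turner}*{Theorem 1} is a reasonable alternative to the paper's (which restricts to one-point bases where $\cT_*=\cS_*$ and $\phi=\id$, then applies the Five Lemma), but without the correct $\phi$ the inductive step does not close.
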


The strategy of the proof is as follows: 
in \S\ref{section300} we constructed a spectral sequence, which
by Proposition \ref{prop:ssT} converges,
$$
E^2_{p,q} = H_p(B, \cH^{\text{fib}}_q (\xi)) \Longrightarrow H_*(\cT_*(E,\cF)).
$$
It suffices then to find a quasi-isomorphism $\phi:\cT_*(E,\cF)\rightarrow\cS_*(E,\cF)$,
inducing an isomorphism $H_*(\cT_*(E,\cF))\cong H_*(\cS_*(E,\cF))=H_*(E,\FF)$.

We begin by defining a chain map $\phi:\cT_*(E,\cF)\rightarrow\cS_*(E,\cF)$.
Let $\xi:B\rightarrow\cpr$ be a bundle, with $B$ any base, and $(E,\FF)$ the total
coloured poset.
Let $(\x,\y)$ be a pair consisting of 
a sequence $\x=x_1<\cdots<x_n$ in
$B$ and a multi-sequence $\y=y_1\leq\cdots\leq y_m$ in $E_{x_1}$. Note that
we allow $1\in B$ to be an element of $\x$ and $1_{x_1}\in E_{x_1}$ to be an element of $\y$.
For each $1\leq i\leq n$, the bundle furnishes us with a poset map 
$f_{x_1}^{x_i}:E_{x_1}\rightarrow E_{x_i}$, so
let $y_{ij}=f_{x_1}^{x_i}(y_j)$, giving
\begin{equation}
  \label{eq:1}
y_{i1}\leq\cdots\leq y_{im},  
\end{equation}
a multi-sequence in $E_{x_i}$ (with $y_{1j}=y_j$). The resulting
$m\times n$ collection $\{y_{ij}\}$ may be placed on the vertices of a
rectangular array and we join these by placing oriented edges between
$y_{i,j}$ and $y_{i,j+1}$ and between $y_{i,j}$ to $y_{i+1,j}$. 
We refer to this as the \emph{grid\/}
associated to the pair $(\x,\y)$. 
We have comparability of elements in
the columns of the grid, $y_{ij}\leq y_{i,j+1}$, thanks to
(\ref{eq:1}), and also along the rows, $y_{ij}<y_{i+1,j}$, via the
definition of the ordering on $E$.

Given  $x_1<\cdots<x_p\in B\setminus 1$  and 
$y_1\leq\cdots\leq y_q\in E_{x_1}\setminus 1_{x_1}$
with $p,q>0$, we consider the grid associated to  
$(\x=x_1<\cdots<x_p<1,\y=y_1\leq\cdots\leq y_q<1_{x_1})$. 
An $(\x,\y)$-{\em multi-sequence} is a multi-sequence in 
$E$ of the form,
$$
\z=z_1\leq\cdots\leq z_{p+q},
$$
with the $z_k$ elements of the grid satisfying $z_1=y_{11}$, $z_{p+q}=y_{p+1,q}$ or 
$z_{p+q}=y_{p,q+1}$, and for any $1\leq k\leq p+q$, if
$z_k=y_{ij}$, then $z_{k+1}\in\{y_{i+1,j},y_{i,j+1}\}$. In other words an 
$(\x,\y)$-multisequence consists of the ordered collection of vertices in an 
oriented path from $y_{11}$ to either $y_{p+1,q}$ or $y_{p,q+1}$. 
Figure \ref{figure505} exhibits a grid and two sample paths.

\begin{figure}
  \centering
\begin{pspicture}(12.5,5)
\rput(-1.25,0){
\rput(4.875,.75){
\psline[linewidth=.3mm]{-}(0,0)(5.25,0)
\psline[linewidth=.3mm]{-}(0,.75)(5.25,.75)
\psline[linewidth=.3mm]{-}(0,1.5)(5.25,1.5)
\psline[linewidth=.3mm]{-}(0,2.25)(5.25,2.25)
\psline[linewidth=.3mm]{-}(0,3)(5.25,3)
\psline[linewidth=.3mm]{-}(0,3.75)(5.25,3.75)
\psline[linewidth=.3mm]{-}(0,0)(0,3.75)
\psline[linewidth=.3mm]{-}(.75,0)(.75,3.75)
\psline[linewidth=.3mm]{-}(1.5,0)(1.5,3.75)
\psline[linewidth=.3mm]{-}(2.25,0)(2.25,3.75)
\psline[linewidth=.3mm]{-}(3,0)(3,3.75)
\psline[linewidth=.3mm]{-}(3.75,0)(3.75,3,75)
\psline[linewidth=.3mm]{-}(4.5,0)(4.5,3.75)
\psline[linewidth=.3mm]{-}(5.25,0)(5.25,3.75)
\psline[linewidth=1mm,linestyle=dashed]{-}(0,0)(0,.75)(.75,.75)(.75,1.5)(2.25,1.5)(2.25,2.25)
(4.5,2.25)(4.5,3)(5.25,3)
\psline[linewidth=1mm]{-}(0,0)(1.5,0)(1.5,.75)(3,.75)(3,1.5)(3.75,1.5)
(3.75,3.75)(4.5,3.75)
}
\rput(4.9,.5){$x_1$}\rput(5.6,.5){$x_2$}\rput(6.4,.5){$x_3$}\rput(9.4,.5){$x_p$}
\rput(10.1,.5){$1$}
\rput(4.6,.8){$y_1$}\rput(4.6,1.5){$y_2$}\rput(4.6,3.75){$y_q$}\rput(4.6,4.5){$1_{x_1}$}
\rput(9.4,4.7){$y_{p,q+1}$}\rput(10.65,3.75){$y_{p+1,q}$}
\psline[linewidth=.5mm,linestyle=dotted](6.8,.5)(9,.5)
\psline[linewidth=.5mm,linestyle=dotted](4.6,1.8)(4.6,3.4)
}
\end{pspicture}
  \caption{$(\x,\y)$-grid with two posible multi-sequences $\z$.}
  \label{figure505}
\end{figure}

Such a path partitions the grid into two halves--the upper-left
half and the lower-right half. Given an $(\x,\y)$-multisequence $\z$ we define
$m(\z)$ to be the number of squares of the grid in the lower-right half.

We now define the map $\phi$ on the generator 
$\lambda \x \y = \lambda x_1\cdots x_py_1\cdots y_q$ by,
\begin{equation}
  \label{eq:2}
\phi(\lambda \x\y) = (-1)^{\aa(q)}\sum_{\z} (-1)^{m(\z)}\lambda \z,
\end{equation}
where the sum is over all $(\x,\y)$-multisequences $\z$, and 
$\aa(q)=1$ when $q\equiv 1,2\text{ mod }4$, or $\aa(q)=0$ otherwise 
(alternatively, $(-1)^{\aa(q)}=(-1)^{[q]}$ for $[q]=1+2+\cdots+q$).

Note that (\ref{eq:2}) is also defined when $p$ or $q$ are $0$: we have 
$\cT_{p+0}(E,\FF)=\cC_p(B,\cS_0)$ with a typical element $\lambda x_1\ldots x_p$
where $\lambda\in\FF_{x_1}(1_{x_1})$ and $\phi(\lambda\x)=\lambda 1_{x_1}\ldots 1_{x_p}$. 
Similarly
$\cT_{0+q}=\cC_0(B,\cS_q)$ with typical element $\lambda y_1\ldots y_q$ for 
$\lambda\in\FF_1(y_1)$, and $\phi(\lambda\y)=(-1)^{\aa(q)}\lambda y_1\ldots y_q$. 
Finally, note that $\cT_{0+0}=\FF_1(1_{P_1})=\cS_{0+0}$ so by convention we define
$\phi=\id:\cT_{0+0}\rightarrow\cS_{0+0}$.

\begin{proposition}\label{sec:main:result200}
$\phi$ is a chain map 
with
$\phi(\cT_*(\ov{E}(x),\FF))\subset\cS_*(\ov{E}(x),\FF)$ for $x\in B$. 
\end{proposition}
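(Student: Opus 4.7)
The proof naturally splits into two parts, one easy and one requiring a careful combinatorial matching on the grid. For the subcomplex claim, if $\lambda\x\y\in\cT_*(\ov{E}(x),\FF)$, the sequence $\x$ lies entirely in $\ov{B}(x)$, hence every grid vertex $y_{ij}=f_{x_1}^{x_i}(y_j)$ belongs to the fibre $E_{x_i}\subseteq\ov{E}(x)$; so every $(\x,\y)$-multisequence $\z$ is a multisequence in $\ov{E}(x)$, and the inclusion is immediate.

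For the chain map property $d_S\phi=\phi d_T$, my plan is to expand both sides on a generator $\lambda\x\y$ and match terms by a combinatorial analysis of paths in the grid. The organising observation is that each interior vertex $z_k$ of a path $\z$ (with $2\le k\le p+q-1$) is either a \emph{corner}, where the path turns, or a \emph{straight-through}, where the two adjacent edges continue in the same direction. Corner contributions will cancel in pairs: for any corner of $\z$ at $z_k$ there is a unique \emph{sister path} $\z'$ obtained by rerouting the corner around the opposite side of the enclosed unit square; $\z'$ agrees with $\z$ away from position $k$, satisfies $m(\z')=m(\z)\pm 1$, and yields the same multisequence upon removal of $z_k$. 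Since the two contributions carry the same $(-1)^k$ and opposite $(-1)^m$ factors, they cancel. The same mechanism cancels the two choices of final vertex $z_{p+q}\in\{y_{p+1,q},y_{p,q+1}\}$ when they share the common penultimate vertex $y_{p,q}$.

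After corner cancellation, I will match the surviving contributions with $\phi d_T$. The leading term $\cF(z_1\le z_2)(\lambda)z_2\cdots z_{p+q}$ of $d_S$ splits according to whether $z_2=y_{21}$ or $z_2=y_{12}=y_2$: in the first case the definition of $\cF$ on the total coloured poset yields $\cF(y_{11}\le y_{21})=\cF_{x_2}(y_{21}\le y_{21})\circ(\tau_{x_1}^{x_2})_{y_1}=(\tau_{x_1}^{x_2})_{y_1}$, reproducing the leading term of $d^h$; in the second, $\cF(y_1\le y_2)=(\cF_{x_1})_{y_1}^{y_2}$, reproducing the leading term of $d^v$. A horizontal straight-through at column $x_i$ in a path $\z$ is in bijection with a path in the grid associated to $(\x\setminus x_i,\y)$, obtained by collapsing the triple $y_{i-1,j}, y_{ij}, y_{i+1,j}$ to a single edge (and inverted by reinserting $y_{ij}$); summing these yields $\phi$ applied to the $\widehat{x_i}$ term of $d^h$. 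The vertical analogue matches the $\widehat{y_j}$ term of $d^v$, and the remaining uncancelled end-of-path removals (with penultimate vertex $y_{p+1,q-1}$ or $y_{p-1,q+1}$) fall under the same straight-through paradigm, accounting for the two endpoints of the reduced grids missed by interior straight-throughs at column $p$ or row $q$.

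The hardest part will be the sign bookkeeping: the factor $(-1)^{\alpha(q)+m(\z)}$ from $\phi$, the $(-1)^k$ from $d_S$, the $(-1)^i$ from the $\x$-faces of $d^h$, and the overall $(-1)^{p+q}(-1)^j$ from the $\y$-faces of $d^v$ must align in every case. The normalisation $\alpha(q)=[q]\bmod 2$ in the definition of $\phi$ is engineered precisely for this: the transition $\alpha(q)\to\alpha(q-1)$ on passing to a $\y$-face compensates the $(-1)^{p+q}$ prefactor of $d^v$, while $\alpha(q)$ is preserved under $\x$-faces. Verifying these sign identities in each of the cases above, together with the degenerate cases $p=0$, $q=0$, or $p=q=0$ (which reduce to direct inspection using the explicit forms of $\phi$ given immediately after its definition), completes the proof.
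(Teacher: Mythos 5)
Your argument follows the same route as the paper's proof: the subcomplex inclusion is immediate from the definitions, and the chain-map identity is verified on the grid by matching straight-through removals (together with the endpoint removals of $z_1$ and $z_{p+q}$) against terms of $\phi d_\cT$, while corner removals cancel in sister-path pairs since $m(\z)$ and $m(\z')$ differ by one and $d_\cS$ contributes the same $(-1)^i$ to each. Your treatment is, if anything, slightly more explicit than the paper's about the endpoint cancellation when $z_{p+q-1}=y_{p,q}$ and about the degenerate cases $p=0$ or $q=0$, but the combinatorial mechanism and the role of the normalisation $(-1)^{\alpha(q)+m(\z)}$ are identical to those in the published proof.
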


\begin{proof}
The inclusion of complexes is clear, so that it remains to show that
$\phi d_\cT(\lambda\x\y)$ $=d_\cS\phi(\lambda\x\y)$ for any $\lambda\x\y\in\cT_{p+q}(E,\FF)$.
There are two parts to the proof:
first we show that every term of $\phi d_\cT$ also occurs as a term of $d_\cS \phi$, and then
that any extra terms in $d_\cS\phi$ occur in $\pm$ pairs, and hence cancel. 

Ignoring signs for the
moment, the terms of $\phi d_\cT$ are precisely the terms 
of $d_\cS\phi$ that
are obtained by taking a 
$\z=z_1\ldots z_{p+q}$
of $\phi(\lambda\x\y)$ and letting the differential $d_\cS$
omit $z_i$ as,
$$
\begin{pspicture}(12.5,2)
\rput(3,0){
\psline[linewidth=.5mm]{->}(-.4,1.4)(2.4,1.4)
\psline[linewidth=.3mm]{-}(0,0)(0,1)(0,2)
\psline[linewidth=.3mm]{-}(1,0)(1,1)(1,2)
\psline[linewidth=.3mm]{-}(2,0)(2,1)(2,2)
\psline[linewidth=.3mm]{-}(0,0)(1,0)(2,0)
\psline[linewidth=.3mm]{-}(0,1)(1,1)(2,1)
\psline[linewidth=.3mm]{-}(0,2)(1,2)(2,2)
\rput*(0,1){$z_{i-1}$}\rput*(1,1.05){$\hat{z}_i$}\rput*(2,1){$z_{i+1}$}
}
\rput(7,0){
\psline[linewidth=.5mm]{->}(.5,-.4)(.5,2.4)
\psline[linewidth=.3mm]{-}(0,0)(0,1)(0,2)
\psline[linewidth=.3mm]{-}(1,0)(1,1)(1,2)
\psline[linewidth=.3mm]{-}(2,0)(2,1)(2,2)
\psline[linewidth=.3mm]{-}(0,0)(1,0)(2,0)
\psline[linewidth=.3mm]{-}(0,1)(1,1)(2,1)
\psline[linewidth=.3mm]{-}(0,2)(1,2)(2,2)
\rput*(1,0){$z_{i-1}$}\rput*(1,1){$\hat{z}_i$}\rput*(1,2){$z_{i+1}$}
}
\end{pspicture}
$$ 
or by letting $d_\cS$ omit the first element $z_1$ or the last
element $z_{p+q}$. In words, if $d_\cS$ omits $z_i$, then
$z_{i-1},z_{i+1}$ are both in the same row, or both in the same
column, as $z_i$. Indeed, the situation above left arises in $\phi
d_\cT$ when $d_\cT$ omits some $x\in B$ that indexes the column in
which $z_i$ lies, and then mapping to the multi-sequence
$\zhat=z_1\ldots \widehat{z}_i \ldots z_{p+q}$ corresponding to a path in the
grid in which the $i$-th column of the original grid has been
deleted. The situation above right is similar, with an appropriate
$y\in E_{x_1}$ omitted by $d_\cT$ this time. Chasing definitions one
can verify that the coefficients of such terms agree up to sign.

With that out of the way, the terms of $d_\cS\phi$ that don't arise at 
all in $\phi d_\cT$
are those where $d_\cS$ omits from a 
multi-sequence $\z$ a $z_i$ sitting on a ``corner'':
$$
\begin{pspicture}(12.5,2)
\rput(3.5,0.5){
\psline[linewidth=.3mm]{-}(0,0)(0,1)\psline[linewidth=.3mm]{-}(1,0)(1,1)
\psline[linewidth=.3mm]{-}(0,0)(1,0)\psline[linewidth=.3mm]{-}(0,1)(1,1)
\rput*(0,0){$z_{i-1}$}\rput*(0,1.05){$\hat{z}_i$}\rput*(1,1){$z_{i+1}$}
\psline[linewidth=.5mm]{->}(-.5,-.4)(-.5,1.4)(1.4,1.4)
}
\rput(7.5,0.5){
\psline[linewidth=.3mm]{-}(0,0)(0,1)\psline[linewidth=.3mm]{-}(1,0)(1,1)
\psline[linewidth=.3mm]{-}(0,0)(1,0)\psline[linewidth=.3mm]{-}(0,1)(1,1)
\rput*(0,0){$z_{i-1}$}\rput*(1,0.05){$\hat{z}_i$}\rput*(1,1){$z_{i+1}$}
\psline[linewidth=.5mm]{->}(-.4,-.4)(1.5,-.4)(1.5,1.4)
}
\end{pspicture}
$$
Now to the signage, where we check first that 
these extra terms occur in $\pm$
pairs and so cancel. Suppose we have such a term, arising when
$d_\cS$ omits a corner from a $\z\in\phi(\lambda\x\y)$. There is
a unique repetition of this term, arising 
by taking the $\z'\in\phi(\lambda\x\y)$ that travels the other way around this square
and letting $d_\cS$ omit the other corner ($\z$ and $\z'$ thus differ only
in the segment shown in the figure above).
It is easy to check that $\z,\z'$ acquire different signs from $\phi$ and that
$d_\cS$ preserves this difference. Thus, the extra terms in $d_\cS\phi$ can be
coupled into $\pm$ pairs as required.

Now to the terms common to $\phi d_\cT$  
and $d_\cS\phi$, where we show that they
occur with the same signs. Let $\lambda \x\y =  
\lambda x_1\cdots x_py_1 \cdots  y_q$ and apply $\phi$ to 
give a sum of $(\x,\y)$-multi-sequences with the multi-sequence 
$\z$ having sign $(-1)^{\aa(q)+ m(\z)}$. Now applying $d_\cS$ to this 
we get a sum of terms of the form $z_1\ldots \hat{z}_i \ldots z_{p+q}$ picking 
up a sign $-(-1)^i$. Thus the total sign of the summand of $d_\cS\phi (\lambda\x\y)$ 
indexed by  $z_1\ldots \hat{z}_i \ldots z_{p+q}$ is
\begin{equation}\label{eq:sign1}
-(-1)^{\aa(q)+ m(\z)+i}.
\end{equation}
Now there are two cases to consider, where the first is if
$z_{i-1},z_{i}$ and  $z_{i+1}$ are all in the same column (see Figure \ref{figure510}). 

\begin{figure}
  \centering
\begin{pspicture}(12.5,5)
\rput(-1.75,0){
\rput(4.875,1.25){
\psline[linewidth=.3mm]{-}(0,-.75)(6,-.75)
\psline[linewidth=.5mm,linestyle=dotted]{-}(0,.75)(6,.75)
\psline[linewidth=.5mm,linestyle=dotted]{-}(0,1.5)(6,1.5)
\psline[linewidth=.5mm,linestyle=dotted]{-}(0,2.25)(6,2.25)
\psline[linewidth=.3mm]{-}(0,3.75)(6,3.75)
\psline[linewidth=.3mm]{-}(0,-.75)(0,3.75)
\psline[linewidth=.5mm,linestyle=dotted]{-}(3,-.75)(3,3.75)
\psline[linewidth=.3mm]{-}(6,-.75)(6,3.75)
\psline[linewidth=.7mm]{->}(2.25,0)(3,.75)(3,2.25)(3.75,3)
}
\rput(7.9,.2){$x_k$}\rput(4.35,2.75){$y_{i-k+1}$}
\rput(7.55,2.15){$z_{i-1}$}\rput(7.65,2.9){$z_i$}\rput(7.55,3.65){$z_{i+1}$}
\rput(9.4,1.25){$(p+1-k)$}
\psline[linewidth=.3mm]{->}(8.5,1.25)(7.9,1.25)
\psline[linewidth=.3mm]{->}(10.3,1.25)(10.9,1.25)
}
\end{pspicture}
  \caption{schematic $\z$ with $z_{i-1},z_i,z_{i+1}$ all in the same column.}
  \label{figure510}
\end{figure}
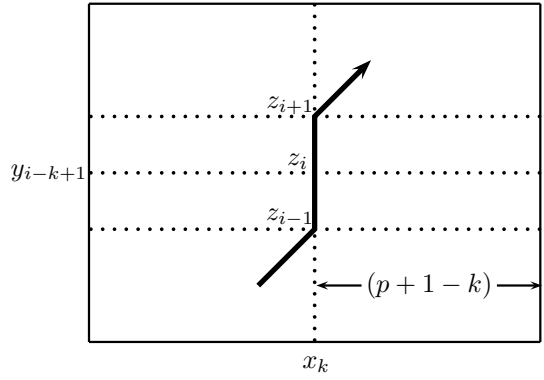

Suppose the column in question is that of $x_k$ for some $k$. This
implies that the row corresponding to $z_i$ is that indexed by
$y_{i-k+1}$. Thus in order to obtain via $\phi d_\cT$ the summand
indexed by $z_1\ldots \hat{z}_i \ldots z_{p+q}$ we must as a first step  consider
the summand of $d_\cT (\lambda \x\y)$ indexed by the term $ x_1\cdots x_py_1
\cdots \hat y_{i-k+1}\cdots y_q$. This picks up the sign
$-(-1)^{p+q+(i-k+1)}$. After applying $\phi$ the summand indexed by
$\zhat =z_1\ldots \hat{z}_i \ldots z_{p+q}$ picks up an additional 
$(-1)^{\aa(q-1)+ m(\zhat)}$ so the total sign is
\begin{equation}\label{eq:sign2}
-(-1)^{p+q+ (i-k+1) + \aa(q-1)+ m(\zhat)}.
\end{equation}
Now, $\zhat$ lives in a new grid obtained 
from the original one by deleting the $(i-k+1)$-th row. Thus
$$
m(\zhat) = m(\z) - (p+1-k),
$$
from which we deduce
$$
p+q+ (i-k+1) + \aa(q-1)+ m(\zhat) = \aa(q-1) + m(\z) + i.
$$
By checking the various possibilities for
$\aa(q)$ (or by observing that
$q+[q-1]=[q]$), we see that the signs of (\ref{eq:sign1}) and  (\ref{eq:sign2}) coincide. 
The second case, where $z_{i-1},z_{i}$ and  $z_{i+1}$ are all in the same row,
is entirely similar.
\end{proof}

As a matter of interest we note that $\phi$ is the unique chain map of this form. 
Thinking degree by degree, if $\phi$ is to be a chain map, then its definition at
$\cT_{p+q}$ is determined by its definition at $\cT_{p-1,q}$ and $\cT_{p,q-1}$,
as well as the differentials $d_\cT$ and $d_\cS$. Thus, once we have defined $\phi$ at
$\cT_{0+0}$, we have \emph{no choice\/} for the definition at $\cT_{0+1},\cT_{1+0}$,
and these in turn determine the definition at $\cT_{1+1}$, and so on, working our
way upwards and rightwards from the bottom lefthand corner of the bi-complex.
As $\cT$ and $\cS$ coincide in degree $0$, the simplest choice for
$\phi$ is the identity, and our map is precisely the chain map determined
by this choice.


\begin{proposition}\label{prop:phi}
$\phi$ is a quasi-isomorphism when
$B$ is finite and specially admissible.
\end{proposition}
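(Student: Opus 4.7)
The plan is to proceed by induction on $|B|$. The base case is when $B$ is a single point, in which case $\cT_*(E,\cF)=\cS_*(E_{1_B},\cF_{1_B})=\cS_*(E,\cF)$ canonically and $\phi$ reduces to the sign-twisted identity; the rank-$1$ Boolean case will then fall out of the inductive step applied with $x=0$, since $B(x)$ and $\ov{B}(x)$ are both single points.

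For the inductive step, use special admissibility to fix $x\prec 1_B$ with both $B(x)$ and $\ov{B}(x)$ specially admissible and strictly smaller than $B$. By Proposition \ref{sec:main:result200} the restriction of $\phi$ to $\cT_*(\oE(x),\cF)$ lands in $\cS_*(\oE(x),\cF)$, and from the grid construction it is precisely the map $\phi$ built from the restricted bundle $\xi|_{\ov{B}(x)}$; the inductive hypothesis applied to $\ov{B}(x)$ then says this restriction is a quasi-isomorphism. Thus $\phi$ induces a morphism of the short exact sequences from the proofs of Propositions \ref{prop:lest} and \ref{prop:less},
\[
\xymatrix@C=4mm{
0 \ar[r] & \cT_*(\oE(x),\cF) \ar[r] \ar[d]^{\phi_{\ov{B}(x)}} & \cT_*(E,\cF) \ar[r] \ar[d]^{\phi} & Q'_* \ar[r] \ar[d]^{\phi_Q} & 0 \\
0 \ar[r] & \cS_*(\oE(x),\cF) \ar[r] & \cS_*(E,\cF) \ar[r] & Q_* \ar[r] & 0
}
\]
and hence a ladder of the associated long exact sequences. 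By the five-lemma it suffices to show that the induced map $\phi_Q\colon Q'_*\to Q_*$ is a quasi-isomorphism.

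For this I exploit the acyclic subcomplexes $D'_*\subset Q'_*$ and $D_*\subset Q_*$ from those same proofs. A first check is that $\phi_Q(D'_*)\subseteq D_*$: given a generator $\lambda\bx\bz\by\in D'_*$ with $x_i\ne x$, and any $(\bx\bz,\by)$-path $\z$ in the associated grid, the last vertex of $\z$ lying in a column indexed by an element of $B(x)$ is of the form $y_{i,\ell}\in E_{x_i}$ and so cannot equal $1_{E(x)}=1_{E_x}$. Hence $\phi_Q$ descends to $\phi_A\colon A'_*\to A_*$. Under the identifications $(-1)^k\pi'\colon A'_*\to \cT_{*-1}(E(x),\cF)$ and $\pi\colon A_*\to \cS_{*-1}(E(x),\cF)$, a direct inspection of the grid combinatorics shows that $\phi_A$ matches the chain map $\phi$ associated to the restricted bundle $\xi|_{B(x)}$: in the grid of a representative $\lambda x_1\ldots x_{i-1}xy_1\ldots y_k$ the rightmost column (indexed by $x$) is effectively trivial, and stripping it recovers the grid of $\lambda x_1\ldots x_{i-1}y_1\ldots y_k$ in the smaller bundle. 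The inductive hypothesis applied to $B(x)$ then gives a quasi-isomorphism, which together with the acyclicity of $D'_*$ and $D_*$ gives that $\phi_Q$ is a quasi-isomorphism, and the five-lemma completes the induction.

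The main obstacle is this last identification: one has to carry out the sign bookkeeping to verify that $\phi_A$ really equals the chain map $\phi$ of $\xi|_{B(x)}$ under the isomorphisms $\pi'$ and $\pi$, matching the path enumeration and the signs $(-1)^{\aa(q)+m(\z)}$ correctly on the two sides of the induction.
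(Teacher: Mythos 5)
Your proof is correct and follows essentially the same route as the paper's: decompose $B$ via special admissibility at some $x\prec 1$, set up the morphism of short exact sequences, use the resulting ladder of long exact sequences plus the quasi-isomorphisms $\psi=\pi q$ and $\psi'=(-1)^j\pi'q'$ from Propositions \ref{prop:lest} and \ref{prop:less} to reduce to the restricted bundles over $\ov{B}(x)$ and $B(x)$, and close by induction on $|B|$ with the five-lemma. The paper folds your $\phi_Q(D'_*)\subseteq D_*$, descent to $A'_*\to A_*$, and grid-stripping identification into a single ``we leave it to the reader to check that this square commutes'' statement expressing $\phi'_*=\psi_*^{-1}\phi_*\psi'_*$; your version just makes that check explicit, and you are right that $\phi$ on a single-point base is the sign-twisted identity rather than literally $\id$ (a small correction to the paper's phrasing that changes nothing). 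The two treatments of the base case (single point, versus rank-$1$ Boolean decomposed into two single points) are the same argument, since special admissibility bottoms out at rank-$1$ Boolean whose split pieces are single points.
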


\begin{proof}
Let $B$ be admissible via $x\prec 1$. 
Recalling the notation used in \S\ref{sec:les} 
there is a morphism of short exact sequences:
$$
\xymatrix{
0 \ar[r] 
&  \cT_{*}(\ov{E}(x), \FF) \ar[r]  \ar[d]_\phi
&  \cT_{*}(E, \FF) \ar[r]  \ar[d]_\phi
&  Q'_* \ar[r]  \ar[d]^{\phi^\prime} 
&0 \\
0 \ar[r]
&  \cS_{*}(\ov{E}(x), \FF) \ar[r]
&  \cS_{*}(E, \FF) \ar[r]
&  Q_*  \ar[r]   & 0.
}
$$ 
where $\phi'$ is the map induced on the quotients by the second part of
Proposition \ref{sec:main:result200}.
It is easy to check that the diagram commutes. We thus have 
a commutative diagram containing the following portion:
$$
\xymatrix{
 {\scriptstyle H_{n+1}(Q^\prime_*)} \ar[r]^\delta \ar[d]^-{\phi^\prime_*} 
& {\scriptstyle H_n(\cT_*(\ov{E}(x),\cF))} \ar[r]  \ar[d]^-{\phi_*} 
& {\scriptstyle H_n(\cT_*(E,\cF))} \ar[r] \ar[d]^-{\phi_*} 
& {\scriptstyle H_{n}(Q^\prime_*)} \ar[r]^-\delta \ar[d]^-{\phi^\prime_*} 
& {\scriptstyle H_{n-1}(\cT_*(\ov{E}(x),\cF))} \ar[d]^-{\phi_*} \\
 {\scriptstyle H_{n+1}(Q_*)} \ar[r]^\delta  
& {\scriptstyle H_n(\cS_*(\ov{E}(x),\cF))} \ar[r]   
& {\scriptstyle H_n(\cS_*(E,\cF))} \ar[r]
& {\scriptstyle H_{n}(Q_*)} \ar[r]^-\delta 
& {\scriptstyle H_{n-1}(\cS_*(\ov{E}(x),\cF))} 
}
$$
We leave it to the reader to check that the diagram
$$
\xymatrix{
  Q'_{i+j} \ar[r]^-{\psi'}  \ar[d]_{\phi'}
&  \cT_{i+j-1}(E(x), \cF)  \ar[d]_\phi \\
Q_{i+j} \ar[r]^-{\psi}
&  \cS_{i+j-1}(E(x), \cF).
}
$$
commutes, with $\psi:=\pi q$ and $\psi':=(-1)^j\pi'q'$ the quasi-isomorphisms
of \S\ref{sec:les}. In particular, we have $\phi_*'=\psi_*^{-1}\phi_*\psi'_*$.

We now argue by induction on the cardinality of the base. If $B$ is Boolean
of rank one with two elements $0<1$, then $B$ is admissible for $x=0$,
$E(x)$ is the fibre $(E_0,\cF_0)$ and $\ov{E}(x)$ is the fibre $(E_1,\cF_1)$.
Both of these are the total spaces of
a bundle over the trivial poset with a single element. 
For such a bundle $\cT_*=\cS_*$ and $\phi=\id$, inducing an isomorphism
$H_*\cT_*\rightarrow H_*\cS_*$. Thus, we have the result for $B$ using the
diagrams above and the
$5$-lemma.

In general, as $B$ is specially admissible, so are $B(x)$ and
$\ov{B}(x)$, and so the proof is again finished by induction and the $5$-lemma. 
\end{proof}

The proof of Theorem \ref{thm:main} is now also complete: Proposition 
\ref{prop:ssT} give us a spectral sequence
\[
E^2_{p,q} = H_p(B, \cH^F_q (\xi)) \Longrightarrow H_*(\cT_*(E,\cF))
\]
and Proposition \ref{prop:phi} gives us an isomorphism  $H_*(\cT_*(E,\cF))\cong  H_*(E,\cF)$.

We remark
here that our proof relies heavily on having a specially admissible base because in
general the long exact sequences used in the previous proposition do not exist. 
Whether the theorem
remains true in greater generality remains an open question.


\begin{example} 
The bundle perspective and the spectral sequence above provide an alternative 
proof of invariance under the first Reidemeister
move for Khovanov homology. Let $D$ and $D^\prime$ be link
diagrams, identical except in a small disc as on the top left of Figure 
\ref{figure520}.

\begin{figure}
  \centering
\begin{pspicture}(12.5,6)
\rput(-1.25,0){
\rput(1,0){
\rput(2,5){\BoxedEPSF{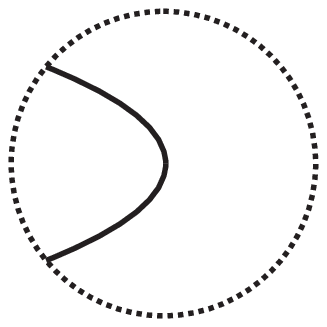 scaled 670}}
\rput(5,5){\BoxedEPSF{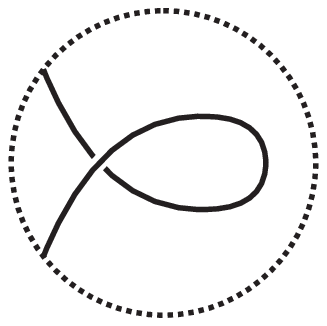 scaled 670}}
\rput(3.5,2){\BoxedEPSF{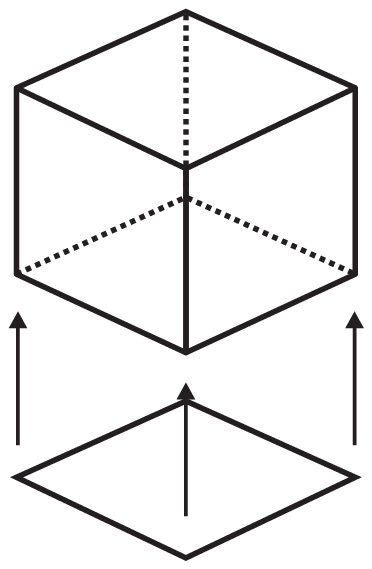 scaled 670}}
\rput(10,1.5){\BoxedEPSF{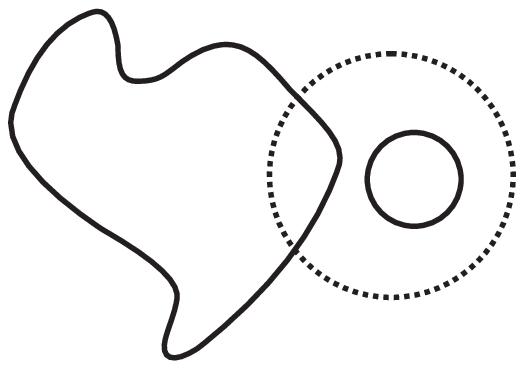 scaled 670}}
\rput(10,4.5){\BoxedEPSF{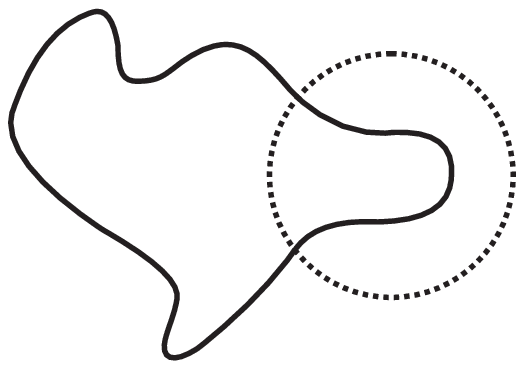 scaled 670}}
\rput(.5,5){$D$}\rput(6.5,5){$D'$}
\rput(1.75,.75){$\bB_N$}\rput(1.75,2.7){$\bB_{N+1}$}
\psline[linewidth=.3mm]{->}(11,2.6)(11,3.6)
\rput(11.3,3.05){$m$}
\psframe[linewidth=1mm,linecolor=gray](4.4,1.9)(4.9,3.5)
\psline[linewidth=1mm,linearc=.3,linecolor=gray]{-}(8,3)(8,6)(12,6)(12,0)(8,0)(8,3)
\psline[linewidth=1mm,linearc=.3,linecolor=gray]{<-}(4.9,2.7)(7,2.7)(7,3.5)(8,3.5)
}}
\end{pspicture}
  \caption{Decomposing a Khovanov-coloured Boolean lattice $\bB_{N+1}$ as 
a bundle over $\bB_N$ to show invariance of the first Reidemeister move.}
  \label{figure520}
\end{figure}

For a link diagram $D$ with $N$ crossings, let $\FF_D$ be the Khovanov colouring
of the Boolean lattice $\bB_N$ on the crossings.
To show invariance under
Reidemeister I, we require $H_{*+1}(\bB_{N+1},\FF_{D'})\cong H_{*}(\bB_N,\FF_D)$.

Consider the crossings of $D'$ \emph{outside\/} the disk. We can decompose
$(\bB_{N+1},\FF_{D'})$ as a bundle over the Boolean lattice $\bB_N$ on these crossings:
the fibres correspond to the two possible resolutions of the crossing \emph{in\/} the disk
as in Figure \ref{figure520} (with $n=2$). 

In particular, the complex $\cC_*$ for the fibres is 
$\id^{\otimes i}\otimes m\otimes\id^{\otimes j}\colon V^{\otimes i+j+2} \ra V^{\otimes i+j+1}$
where $V$ is the Frobenius algebra used in the definition of Khovanov
homology and $m$ is its multiplication. As $m$ is onto and 
$\ker(m)\cong V$ we have $\cH_*^{\text{fib}}(\xi)$ is $V$ in degree $1$ and $0$ elsewhere,
thus the homology colouring $(\B_N,\cH_1^{\text{fib}}(\xi))$ of the base
is $(\B_N,\FF_D)$. The spectral sequence thus has $E^2$-page
$$
E^2_{p,q} = \begin{cases}
            H_p(\bB_N, \cF_D), & q=1\\
	    0, & \text{otherwise.}
	    \end{cases}
$$
collapsing, and so $H_{*-1}(\B_N,\FF_D)\cong H_*(\B_{N+1},\FF_{D'})$. Now
apply the main theorem of \cite{Everitt-Turner}, turning this isomorphism
into the required one between the Khovanov homologies.
\end{example}




\section{An application to knot homology}
\label{sec:appl}

In this section we give an application of the spectral sequence
of Theorem \ref{thm:main}
to knot homology. We will write the results in terms of Khovanov
homology, but in fact one can be more general--see the remarks at the
end of the section.

The idea is the following. Starting with a link diagram we choose a
subset of the crossings to be fixed and form a cube of link diagrams by
resolving the remaining crossings in the way familiar in Khovanov
homology (see Khovanov \cite{Khovanov00} and Bar-Natan
\cite{Bar-Natan02} ). We are now at liberty to take the Khovanov
homology of each link diagram at a cube vertex and moreover to
consider the induced maps associated to Morse moves along the
edges. From this we can form a complex \`a la Khovanov. The main
result is that there is a spectral sequence converging to the Khovanov
homology of the original diagram with the homology of this new complex
at the $E^2$-page.

Our conventions for shifting complexes will be that for a
tri-graded vector space $W_{*,*,*}$ we define $W_{*,*,*}[a,b,c]$ by
$(W_{*,*,*}[a,b,c])_{i,j,k}= W_{i-a,j-b,k-c}$. Similar conventions
apply to bi-graded and singly graded vector spaces.

First we outline our grading and normalisation conventions in
constructing Khovanov homology. Let $D$ be a link diagram with $N$
crossings. The {\em unnormalised} Khovanov homology of the diagram $D$,
denoted $\obkh ** (D)$, is defined to be the homology of the complex $\obc ** (D)$ given as
$$
\obc i * (D) =\kern-3mm\bigoplus_{\rk(\alpha) = N-i}
\kern-3mmV^{\otimes k_\alpha}[\text{rk}(\alpha)]
$$ where $V$ is the usual graded vector space defining Khovanov
homology, $k_\alpha$ is the number of circles appearing in
smoothing $\alpha$ and $\rk$ is the rank function on the 
Boolean lattice. The differential $d\colon \obc i * (d) \ra \obc
{i-1} *$ is the usual one.

\begin{figure}
  \centering
\begin{pspicture}(12.5,5.75)
\rput(0,0){
\rput(3,2.5){\BoxedEPSF{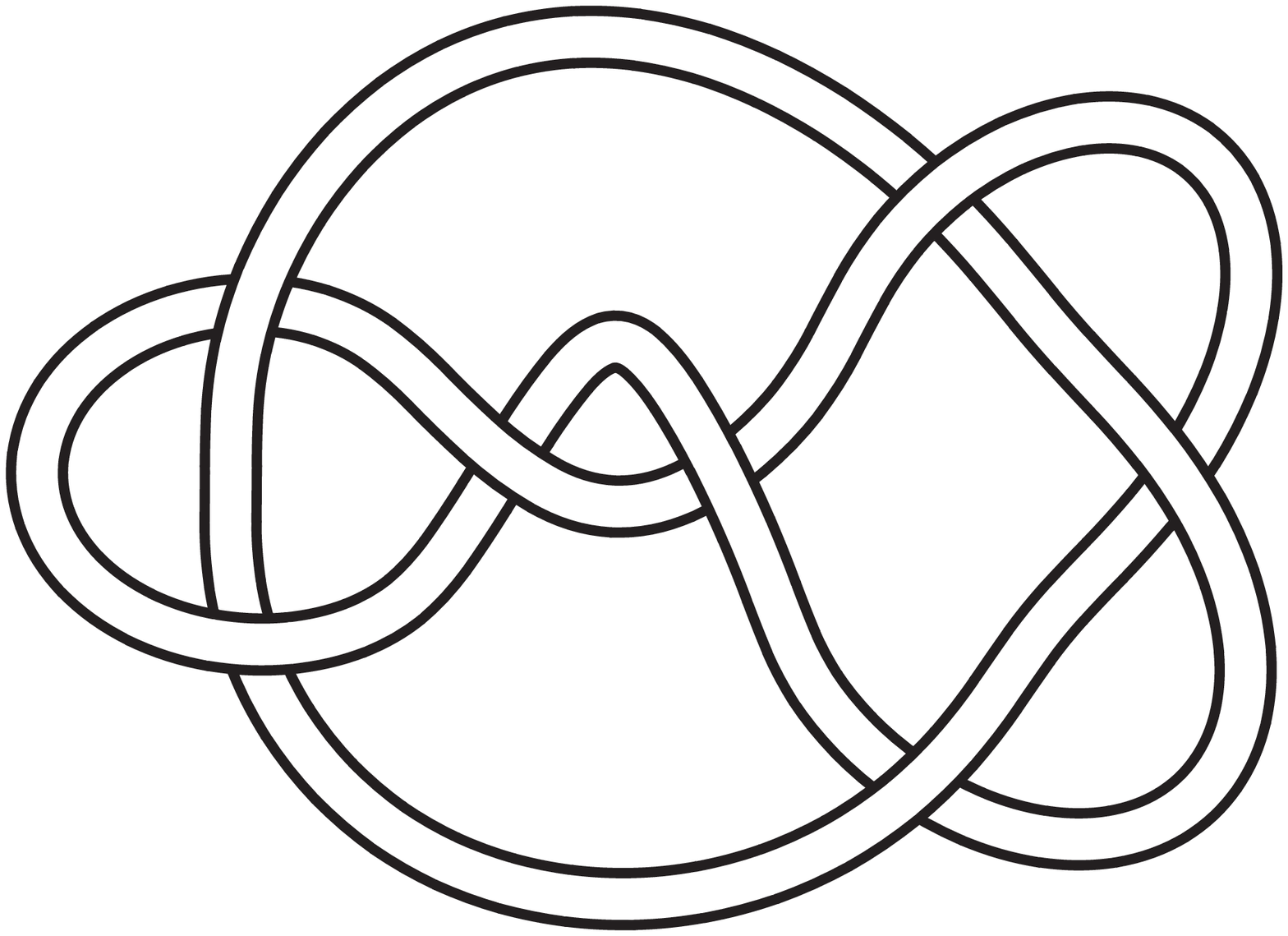 scaled 150}}
\pscircle[linewidth=.4mm](2.15,2.15){.2}
\pscircle[linewidth=.4mm](2.7,2.525){.2}
\pscircle[linewidth=.4mm](3.65,3.1){.2}
\pscircle[linewidth=.4mm](3.65,1.8){.2}
}
\rput(0,-.5){
\rput(9,3.25){\BoxedEPSF{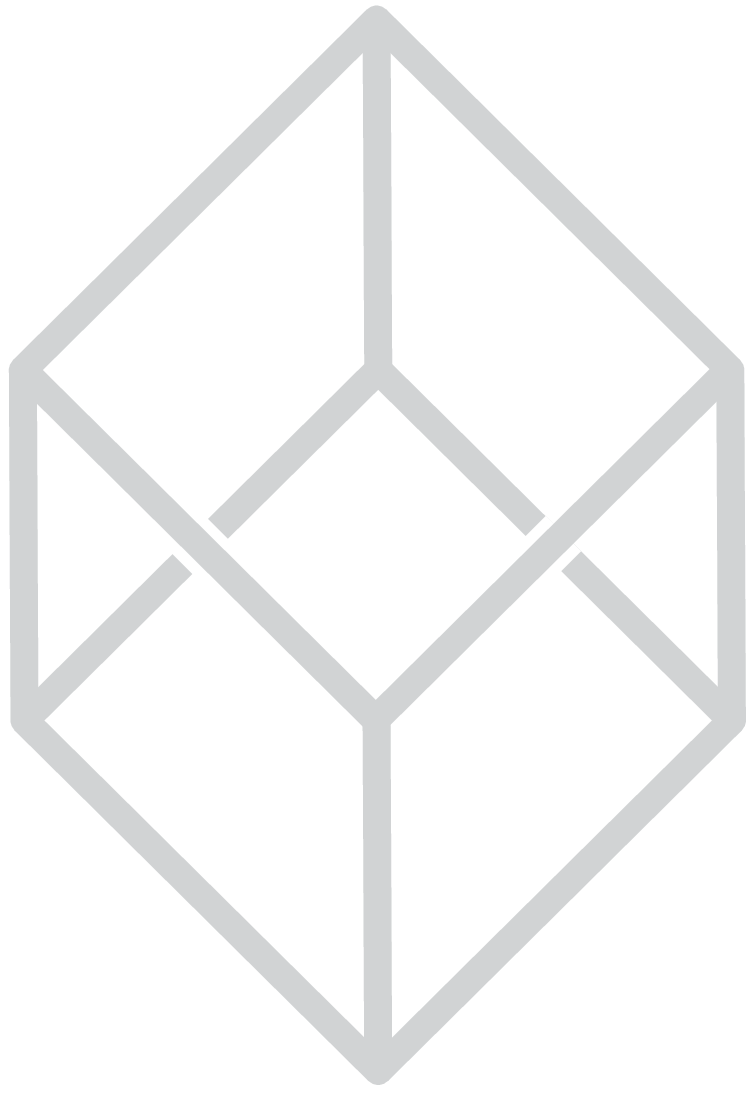 scaled 500}}
\rput(9,5.75){\BoxedEPSF{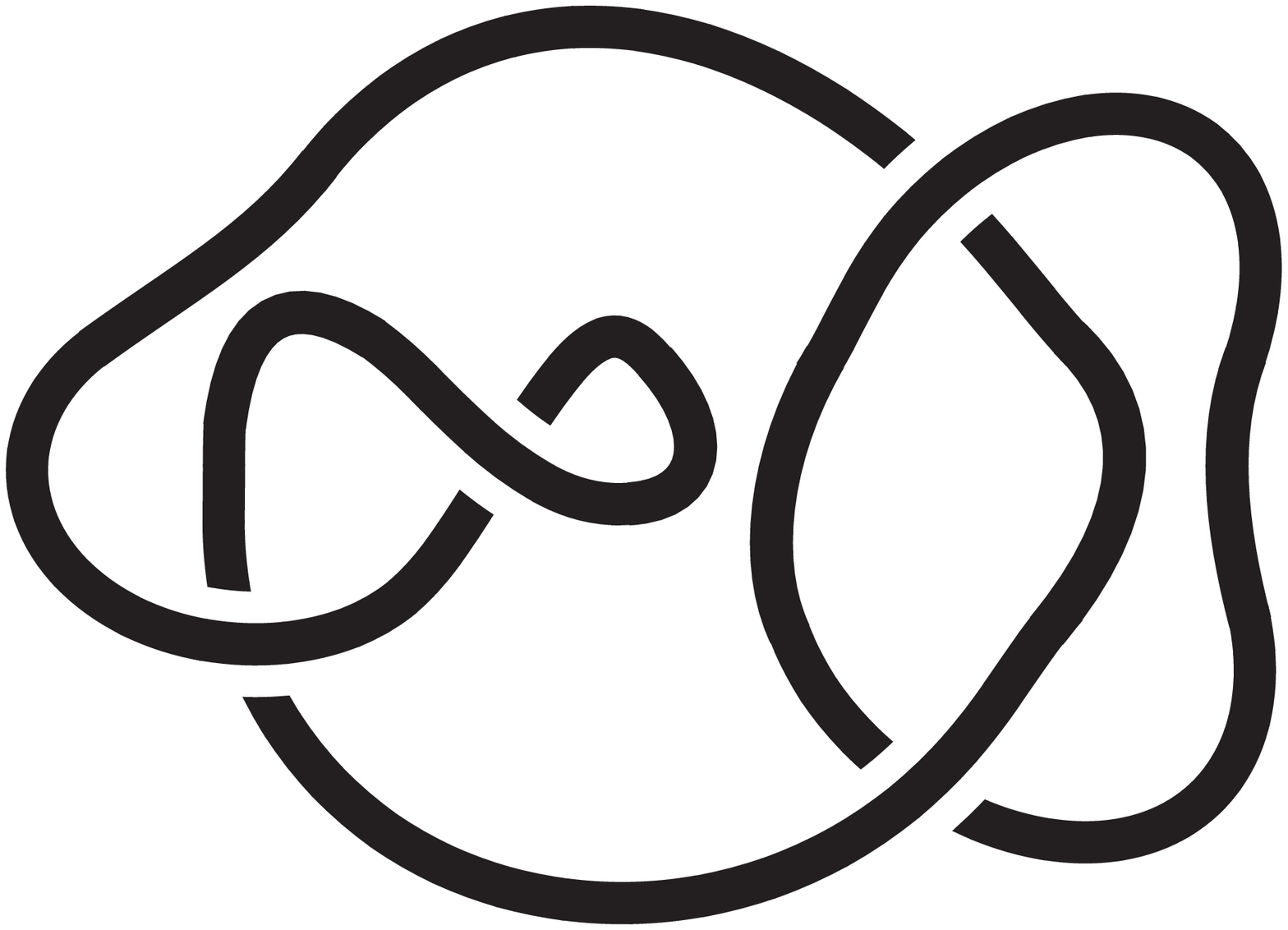 scaled 60}}
\rput(7.2,4){\BoxedEPSF{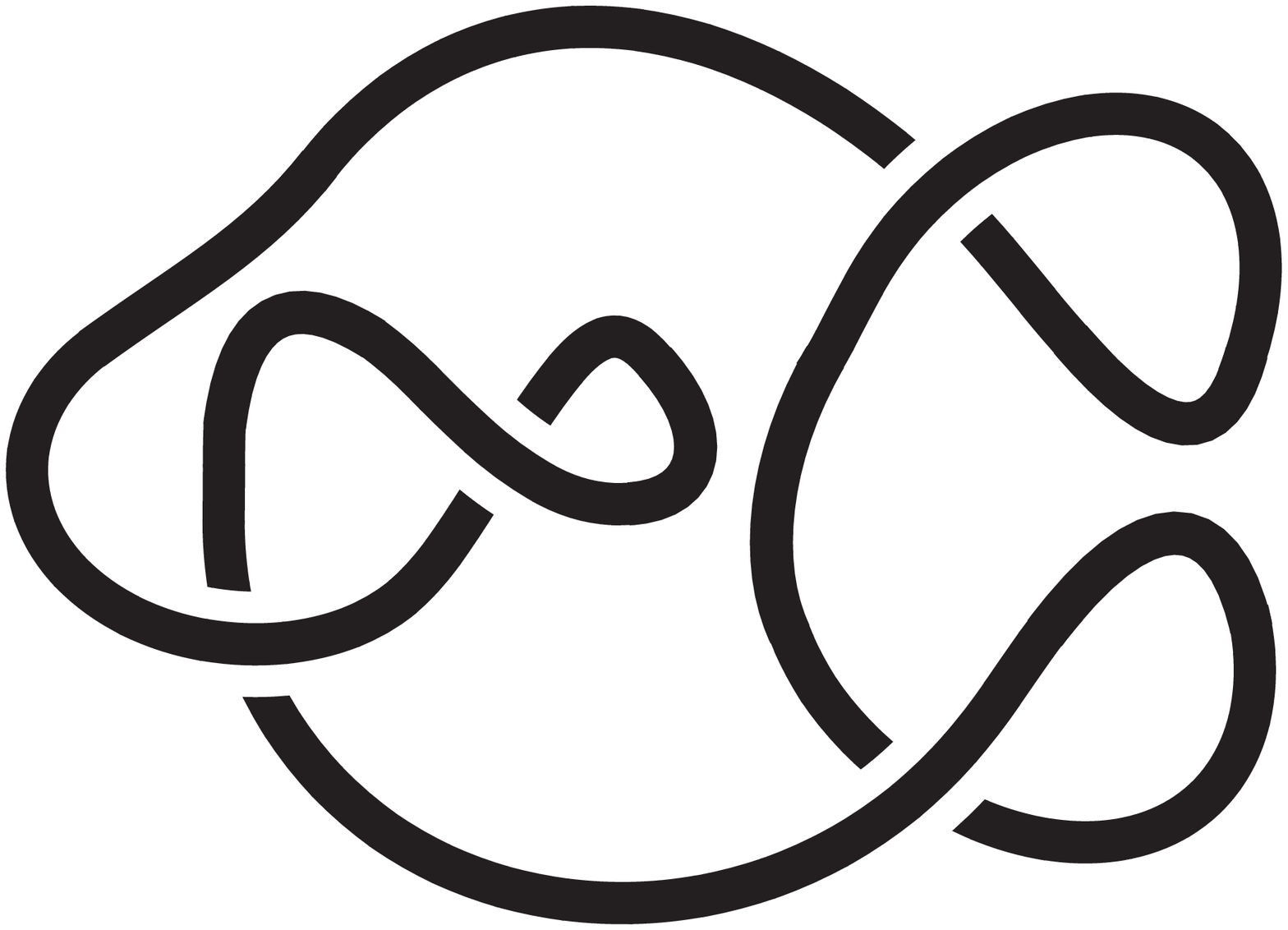 scaled 60}}
\rput(9,4){\BoxedEPSF{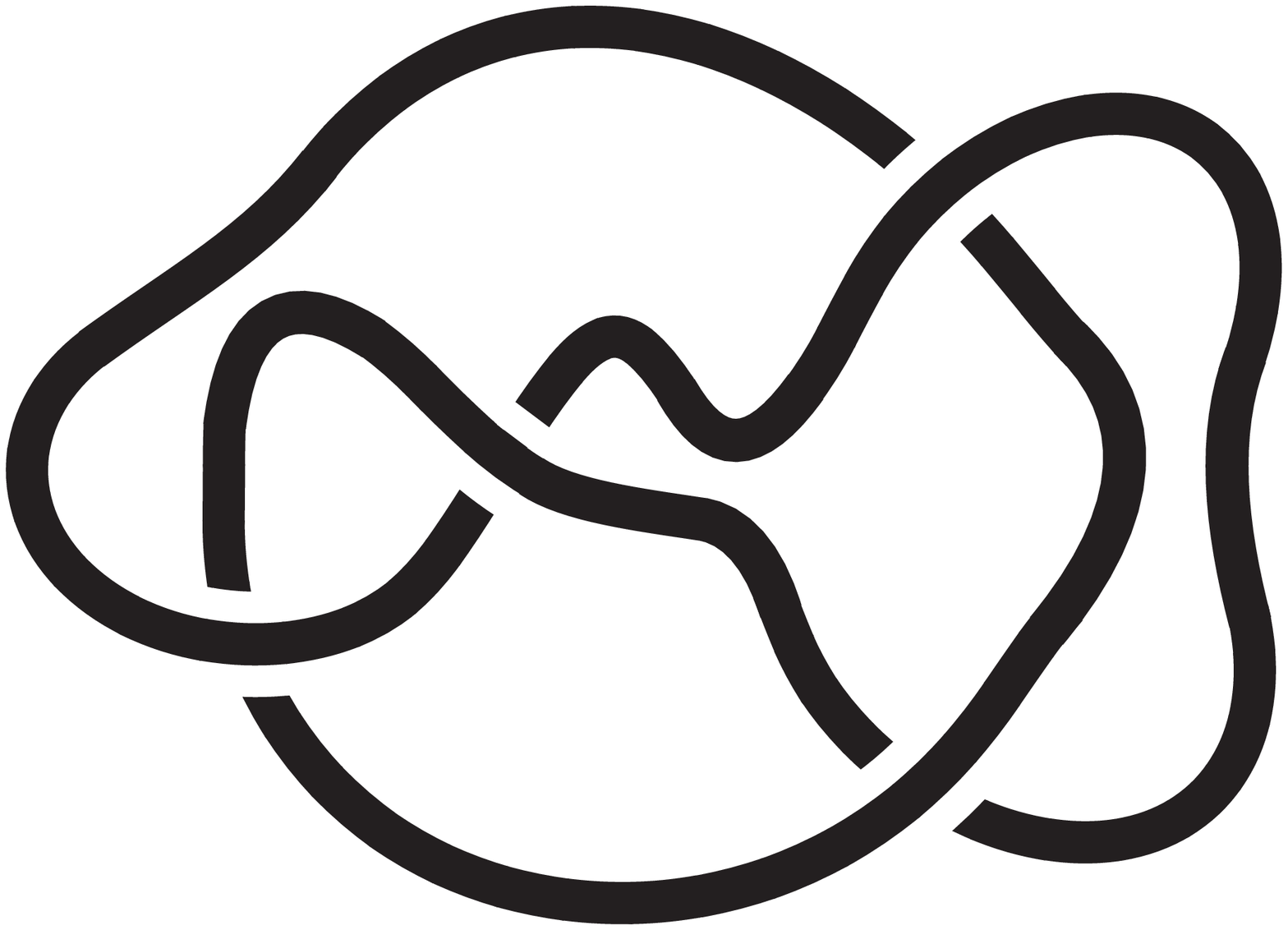 scaled 60}}
\rput(10.8,4){\BoxedEPSF{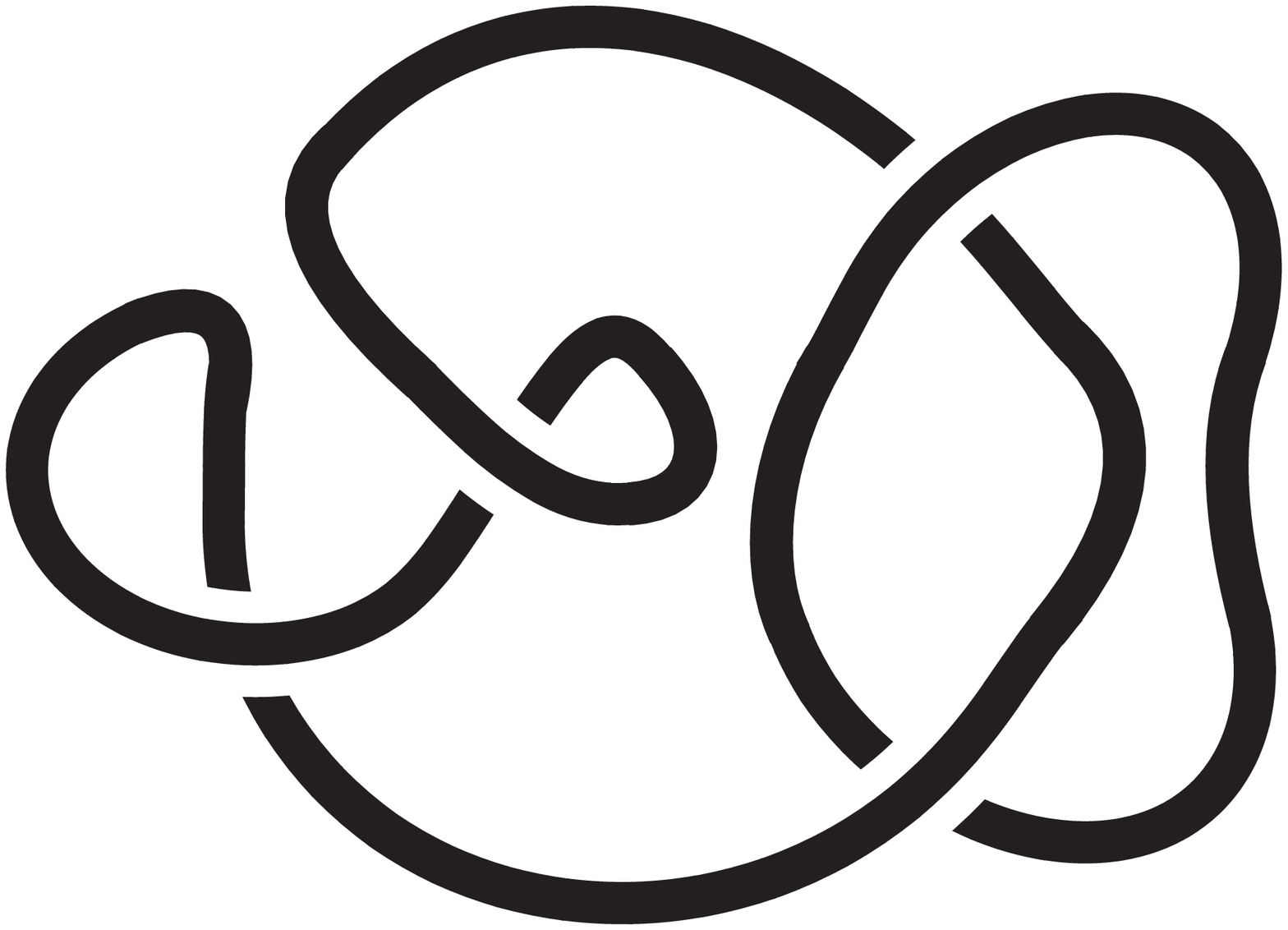 scaled 60}}
\rput(7.2,2.35){\BoxedEPSF{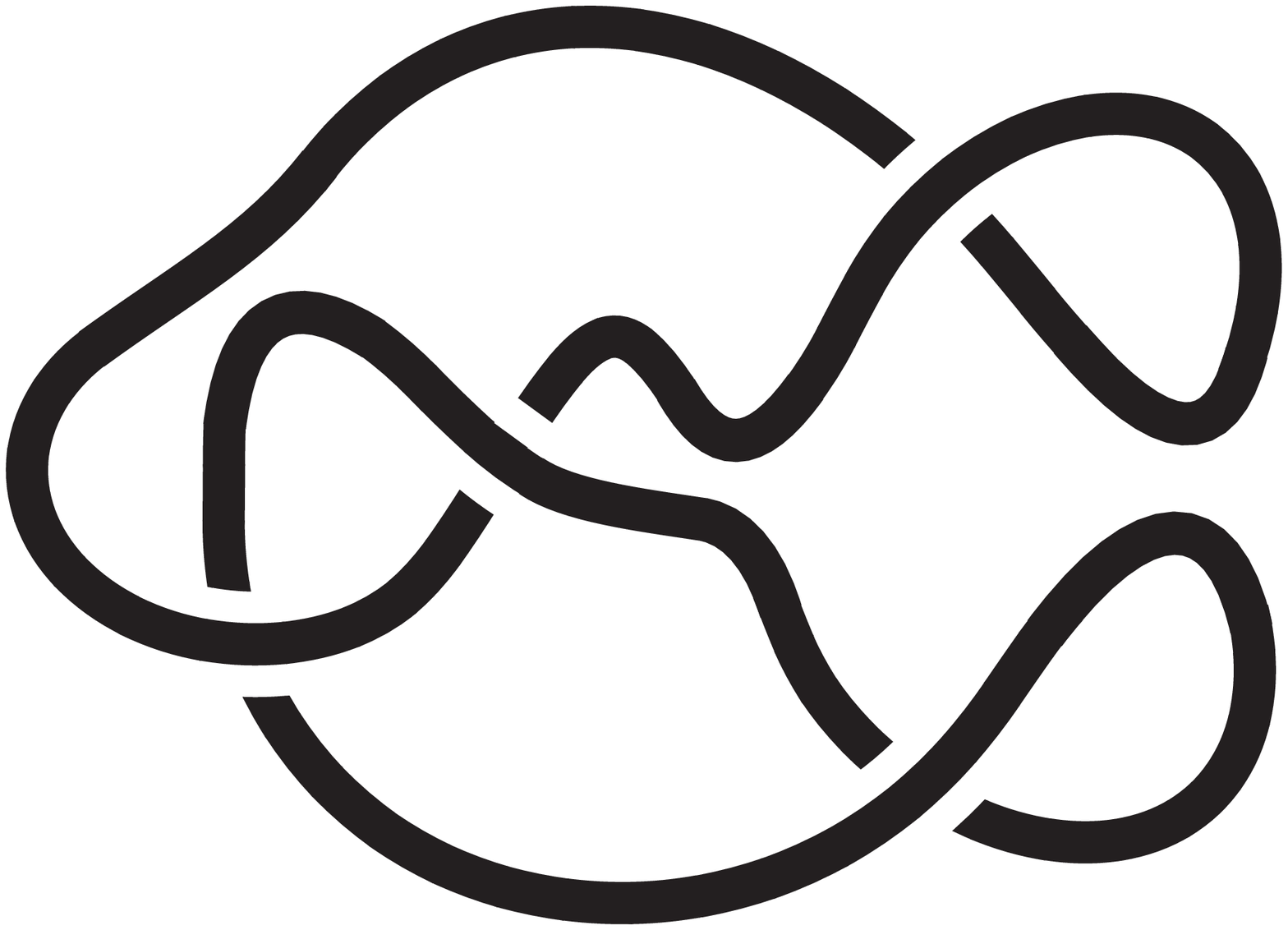 scaled 60}}
\rput(9,2.35){\BoxedEPSF{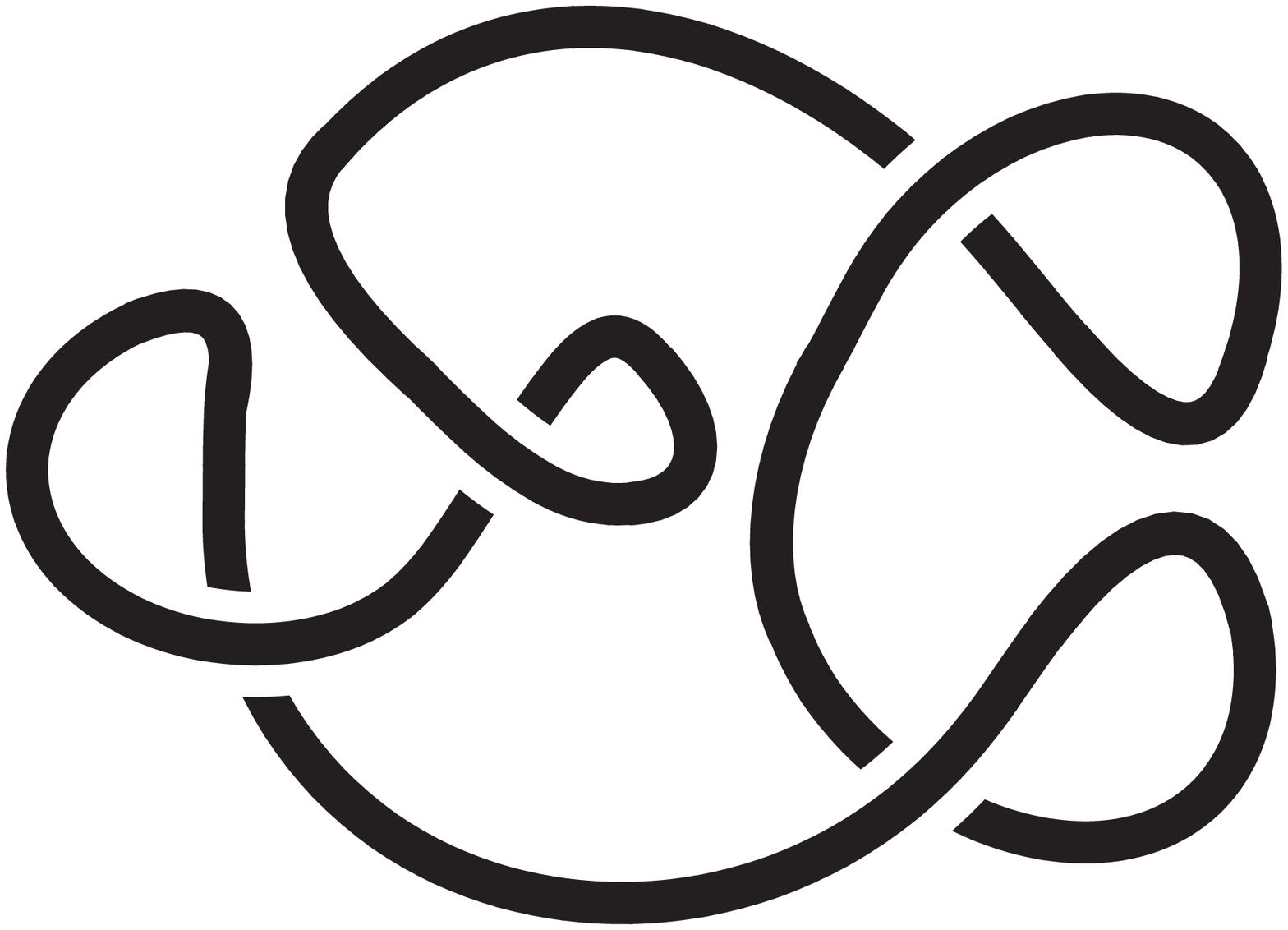 scaled 60}}
\rput(10.8,2.35){\BoxedEPSF{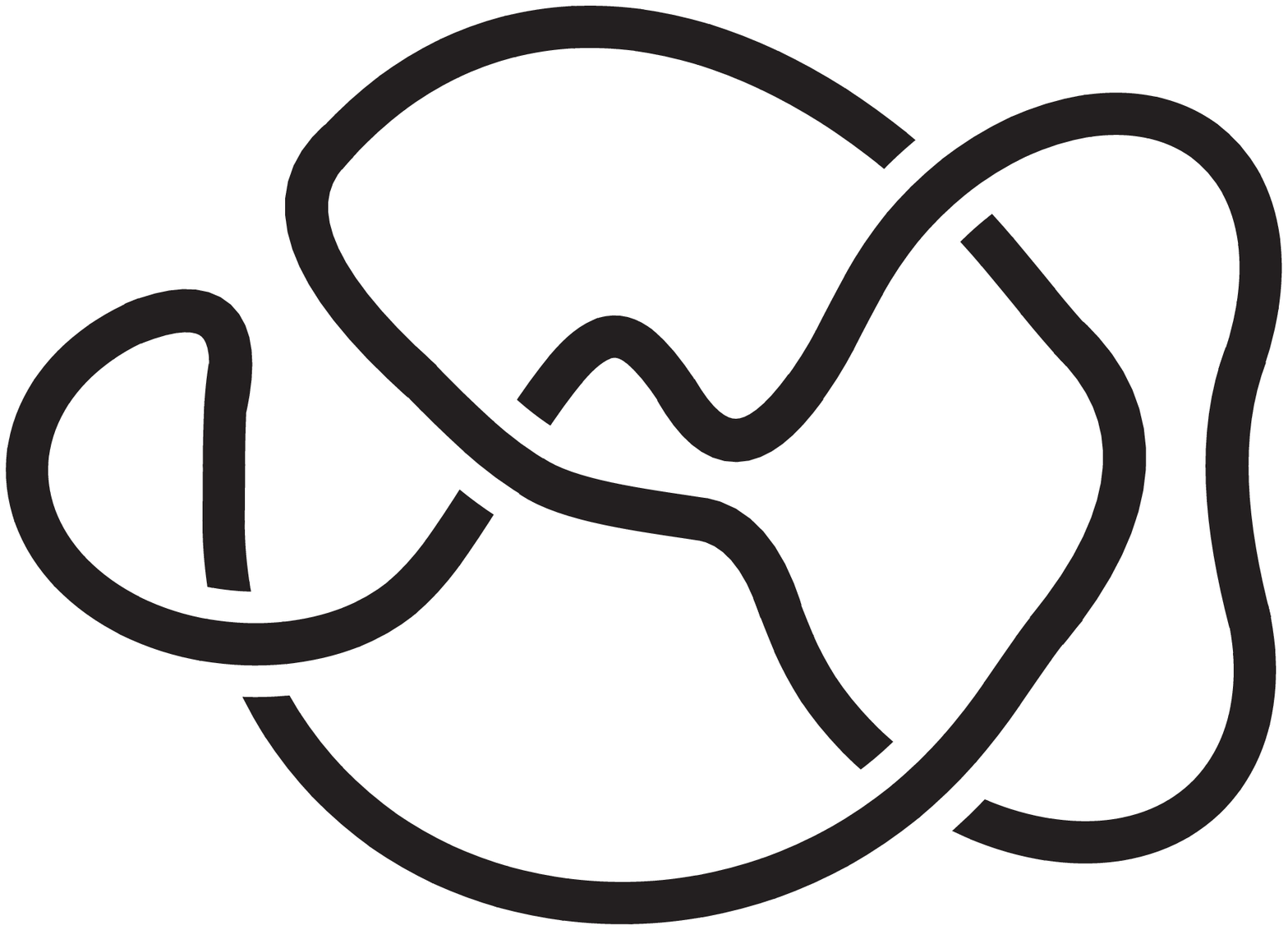 scaled 60}}
\rput(9,.8){\BoxedEPSF{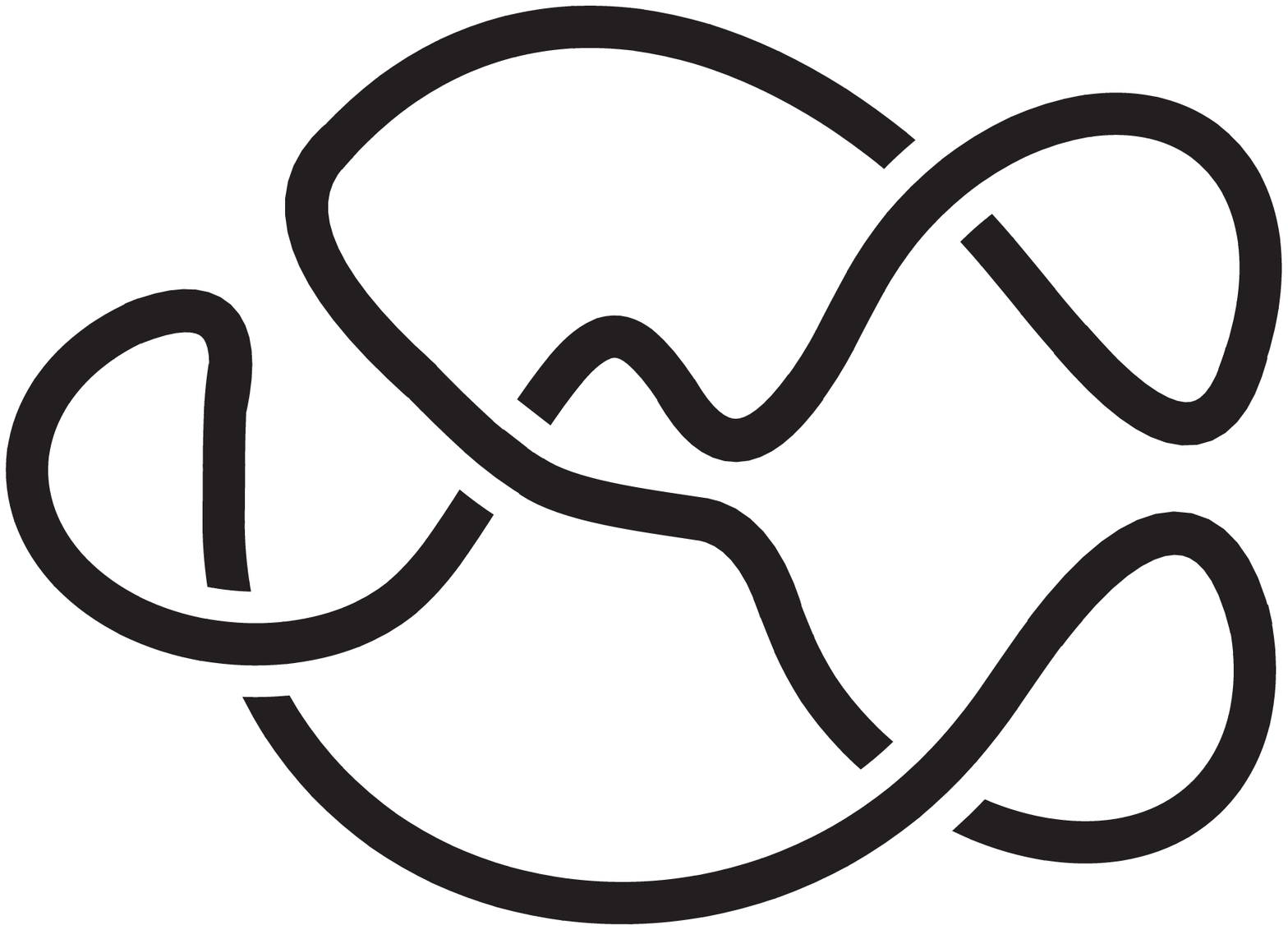 scaled 60}}
}
\end{pspicture}
  \caption{the knot $7_5$ with four fixed crossings}
  \label{figure600}
\end{figure}

If the diagram $D$ is oriented, with $N_+$ positive 
and $N_-$ negative crossings, we can also define the {\em normalised} 
Khovanov homology to be,
$$
\kh ** (D) = \obkh ** (D)[-N_+, N_+ - 2N_-].
$$ 
The reader should note that this differs from the original
conventions followed by Khovanov \cite{Khovanov00} and Bar-Natan
\cite{Bar-Natan02}, where unnormalised homology is defined via a
cochain complex $\obcoriginal ** (D)$ which is normalised to define $
\khoriginal ** (D)$, the latter being an invariant of oriented
links. The conventions we use are related to the original ones by
$$
\obcoriginal i j (D) = \obc {N-i} j (D),
$$
$$
\khoriginal i j (D) = \kh {-i} j (D).
$$

Now to the complex referred to above. 
Let $D$ be an oriented link diagram with $N$ crossings. Choose $k$
crossings $c_1, \ldots , c_k$ and call them the {\em fixed
crossings}. The remaining $\ell=N-k$ crossings are the
{\em free crossings}, and we assume these too have been ordered.

Resolving the free crossings in one of the two familiar
ways yields $2^\ell$ diagrams which we place at
the vertices of the Boolean lattice $\bB_\ell$ (a {\em cube} in more
usual Khovanovology). Figure \ref{figure600} illustrates this for the knot $7_5$ 
with the four fixed crossings as shown in red.

For $x\in\bB_\ell$ let $D_x$ denote the corresponding link diagram. Note
that $D_x$ does not in general inherit an orientation from $D$, and we will be 
treating $D_x$ as an unoriented diagram. 

We now associate to each vertex $x$ of the cube the bigraded module
$$V_{*,*}(x)= \obkh ** (D_x)[0,\text{rk}(x)]$$ 
the unnormalised Khovanov
homology of the (unoriented) diagram $D_x$ shifted in the second degree by the 
rank of $x$  in $\bB_\ell$. 

If $x<x^\prime\in\cB_\ell$ then the diagrams $D_x$ and $D_{x^\prime}$ are
identical except in a small disc within which one of the zero
smoothings in $D_x$ turns in to a 1-smoothing in $D_{x^\prime}$.  By
using the multiplication or comultiplication in the familiar way
(geometrically using the saddle cobordism) there is a chain map,
$$
\obc ** (D_x) \ra  \obc ** (D_{x^\prime}).
$$ 
This map has bi-degree $(0,-1)$. Thus, after taking homology and shifting,  
we have a homomorphism of bidegree $(0,0)$,
$$
\delta_x^{x^\prime} \colon V_{*,*}(x) \ra  V_{*,*}({x^\prime}).
$$

We now define the complex $\obcK_{*,*,*}(D;c_1, \ldots , c_k)$ by setting
$$
\obcK_{i,*,*}(D;c_1, \ldots , c_k) =\kern-5mm\bigoplus_{x\in \bB_\ell, \text{rk}(x) = N-i}
\kern-5mmV_{*,*}(x).
$$ 
The differential is defined using the maps $\delta_x^{x^\prime}$ in
the usual way (and using the usual signage conventions in Khovanov
homology). It has tri-degree $(1,0,0)$,
and is indeed a differential because  all squares in the cube
anti-commute (even though these are now Morse-move frames in a movie of link diagrams) 
because saddles can be re-arranged without any unwanted side effects.


We now define the Khovanov homology of the diagram $D$ \emph{with respect to the given 
fixed crossings\/} to be,
$$
\ob{K\!H}_{*,*,*}(D; c_1, \ldots , c_k):= H(\obcK_{*,*,*}(D; c_1, \ldots , c_k)).
$$
It is important to note that $\ob{K\!H}_{*,*,*}(D; c_1, \ldots , c_k) $ depends on the set 
of fixed crossings and is with respect to a specific diagram. 
However, $\ob{ K\!H}_{*,*,*}(D; c_1, \ldots , c_k) $ is invariant under re-ordering of the 
fixed and free crossings. If the set of crossings is empty then we recover the 
unormalised Khovanov homology.

With these preliminaries let us now state the theorem.

\begin{theorem}\label{thm:mainappl}
Let $D$ be an $N$-crossing link diagram and let $c_1, \ldots , c_k$ be $k$ crossings of $D$. 
For each $i$, there is a spectral sequence
\[
E^2_{p,q} = \ob{K\!H}_{p,q,i}(D; c_1, \ldots , c_k)  \Longrightarrow \obkh {p+q} i (D).
\]
The $r$-th differential in the spectral sequence has bidegree $(-r,r-1)$.
\end{theorem}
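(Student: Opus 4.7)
The plan is to realise the Khovanov chain complex of $D$ as the total coloured poset of a bundle over the Boolean lattice $\bB_\ell$ on the $\ell$ free crossings, whose fibres are the Khovanov coloured posets of the partially resolved diagrams $D_x$, and then to apply Theorem \ref{thm:main}.

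First I would construct a functor $\xi\colon\bB_\ell\to\cpr$ by sending $x\in\bB_\ell$ to the coloured poset $(\bB_k,\cF_{D_x})$ consisting of the cube of resolutions of the $k$ fixed crossings in the diagram $D_x$, decorated by the Khovanov tensor powers of $V$. For a cover $x\prec x'$ in $\bB_\ell$, the associated morphism of coloured posets is the identity on $\bB_k$ paired with the natural transformation of colourings whose component at each vertex of $\bB_k$ is the multiplication or comultiplication on $V$ supplied by the saddle cobordism at the free crossing toggled in passing from $x$ to $x'$. Functoriality of $\xi$ amounts to commutativity of each $2$-face of the full Khovanov cube of $D$ in a pair of free-crossing directions, which is standard. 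Using the product decomposition $\bB_N\cong\bB_\ell\times\bB_k$ of Example \ref{ex:boolean}, the associated total coloured poset $\EE(\xi)$ is canonically isomorphic to $(\bB_N,\cF_D)$, so by the main theorem of \cite{Everitt-Turner} we have $H_*(\EE(\xi))=\obkh{*}{*}(D)$.

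Because the Boolean lattice is specially admissible, Theorem \ref{thm:main} delivers a spectral sequence
\[
E^2_{p,q}=H_p(\bB_\ell,\cH^{\text{fib}}_q(\xi))\Longrightarrow \obkh{p+q}{*}(D),
\]
with $d^r$ of bidegree $(-r,r-1)$ by the standard bicomplex spectral sequence of Proposition \ref{prop:ssT}. To finish, I would identify the $E^2$-page with $\ob{K\!H}_{*,*,*}(D;c_1,\dots,c_k)$. By construction $\cH^{\text{fib}}_q(\xi)(x)=H_q(\bB_k,\cF_{D_x})=\obkh{q}{*}(D_x)$, and the colouring morphism $\cH^{\text{fib}}_q(\xi)(x\leq x')$ is precisely the saddle-induced map $\delta_x^{x'}$ of this section. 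A second application of the main theorem of \cite{Everitt-Turner}, this time to the base $\bB_\ell$ coloured by $\cH^{\text{fib}}_q(\xi)$, identifies $H_p(\bB_\ell,\cH^{\text{fib}}_q(\xi))$ with the homology of the Khovanov-style complex $\obcK_{*,q,*}(D;c_1,\dots,c_k)$, and hence with $\ob{K\!H}_{p,q,*}(D;c_1,\dots,c_k)$. Restricting to fixed quantum degree $i$ yields the stated spectral sequence.

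The main obstacle will be bookkeeping of gradings and signs. The internal shift $[0,\mathrm{rk}(x)]$ in the definition of $V_{*,*}(x)$ must be matched to the conventions relating coloured-poset homology to Khovanov homology in \cite{Everitt-Turner}, so that the saddle maps $\delta_x^{x'}$ acquire bidegree $(0,0)$; and the signs in the $\cC_*$-differential on $\bB_\ell$ must be reconciled with the standard Khovanov sign convention used to define $\obcK$. These are routine but careful checks, after which Theorem \ref{thm:main} simultaneously produces the spectral sequence and the claimed bidegree of $d^r$.
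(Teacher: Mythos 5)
Your proposal is correct and follows essentially the same route as the paper: decompose $(\bB_{k+\ell},\cF_D)$ via Example \ref{ex:boolean} as a bundle over the Boolean lattice on the free crossings, apply Theorem \ref{thm:main} to a specially admissible base, and use the Everitt--Turner identification of coloured-poset homology with (unnormalised) Khovanov homology twice --- once to recognise the abutment as $\obkh{*}{i}(D)$ and once to recognise the $E^2$-page as $\ob{K\!H}_{p,q,i}(D;c_1,\dots,c_k)$. The only loose point, which you already flag, is that $\cH^{\text{fib}}_q(\xi)(x)$ equals $\obkh{q}{*}(D_x)$ shifted by $[0,\mathrm{rk}(x)]$, which is exactly the shift built into the definition of $V_{*,*}(x)$, so the bookkeeping you anticipate does close up.
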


\begin{proof}
The procedure for constructing $\obkh ** (D)$ includes (in our language) 
the Khovanov colouring $\cF$ of the 
Boolean lattice $\B_{k+\ell}$, on the set of 
crossings of the diagram. It is a graded colouring in that it takes values 
in graded modules. 

We have been given a subset of the crossings in the form of the
fixed crossings. By Example \ref{ex:boolean} we can decompose
$(\B_{k+\ell}, \cF)$ as a bundle $\xi$ over the Boolean
lattice on the free crossings, i.e. a copy of $\B_\ell$. Note that the total space of this
bundle is $(\B_{k+\ell},\cF)$.

Fixing $i$ and applying Theorem \ref{thm:main} we thus have a spectral sequence
\[
E^2_{p,q} = H_p(\B_\ell, \cH^{\text{fib}}_q (\xi)) \Longrightarrow H_{*,i}(\B_{k+\ell},\cF),
\]
where the colouring $\cF$ is the restriction of the Khovanov colouring to grading $i$.

Recall from \cite{Everitt-Turner} that
\[
\obkh ** (D) \cong H_{*,*} (\B_{k+\ell},\cF),
\]
Thus the spectral sequence above converges as required to $ \obkh * i (D)$,
and it remains to identify the $E^2$-page.
 
It is clear that for $x\in \B_\ell$ the fibre over $x$ is isomorphic to
$\B_k$ equipped with the Khovanov colouring $\cF_x$ of the diagram
$D_x$ shifted by $\text{rk}(x)$. Thus the homology of the fibre over
$x$ is given by $ H_{*,*} (\B_{k},\cF_x)[0,\text{rk}(x)]$. Using again
the isomorphism between Khovanov homology and coloured poset homology,
we see that the homology of the fibre is isomorphic to $ \obkh **
(D_x)[0,\text{rk}(x)]= V_{*,*}(x)$. The induced maps between fibres
are by definition the maps $\delta_x^{x^\prime}$. Thus we have
coloured $\B_\ell$ by the modules $V_{*,*}(x)$ and the maps
$\delta_x^{x^\prime}$. The $E^2$-page of the spectral sequence is the
coloured poset homology of this coloured poset and
$\ob{K\!H}_{*,*,i}(D; c_1, \ldots , c_k)$ is the Khovanov
homology of it. Appealing again to \cite{Everitt-Turner}, these two homologies 
agree, and hence the result is proved.
\end{proof}

\begin{remark}
In practice it may well be useful to apply Reidemeister moves to $D_x$ and a 
quick look at Figure \ref{figure600} should be enough to see why. 
Since we are using unnormalised Khovanov homology to colour
fibres we must be a little careful as shifts need to be introduced. The
best way to proceed is to choose an orientation for $D_x$ and count
the number of positive and negative crossings $n_+$ and $n_-$. One can
then express $V_{*,*}(x)$ in terms of normalised Khovanov homology as
$$
V_{*,*}(x) = \kh **  ({D}_x)[n_+, 2n_- - n_+ + \text{rk}(x)],
$$ 
and now of course we are free to compute $\kh **  ({D}_x)$ by 
applying Reidemeister moves if necessary. 
\end{remark}

\begin{remark}
All of the above works in more generality than is stated. Indeed one
can do a similar thing given a link homology theory which can be
extended to knotted resolutions. An important example of this is
Khovanov-Rozansky theory which is constructed from a cube of
resolutions where each resolution can be viewed as a 4-valent
graph. By fixing some subset of crossings we get a (smaller) cube of
knotted 4-valent graphs. Khovanov-Rozansky homology has been extended
to such knotted graphs by Wagner in \cite{Wagner}. By following
through the procedure above one then gets a spectral sequence
converging to Khovanov-Rozansky homology with a diagram-dependent, but
in principle calculable, $E^2$-page.
\end{remark}



\section*{References}

\begin{biblist}

\bib{Bar-Natan02}{article}{
   author={Bar-Natan, Dror},
   title={On Khovanov's categorification of the Jones polynomial},
   journal={Algebr. Geom. Topol.},
   volume={2},
   date={2002},
   pages={337--370 (electronic)},
   issn={1472-2747},
   review={MR{1917056 (2003h:57014)}},
}

\bib{Everitt-Turner}{article}{
   author={Everitt, Brent},
   author={Turner, Paul},
   title={Homology of coloured posets: a generalisation of Khovanov's cube
   construction},
   journal={J. Algebra},
   volume={322},
   date={2009},
   number={2},
   pages={429--448},
   issn={0021-8693},
   review={MR{2529096 (2010h:06005)}},
}

\bib{Humphreys90}{book}{
    author={Humphreys, James E.},
     title={Reflection groups and Coxeter groups},
    series={Cambridge Studies in Advanced Mathematics},
    volume={29},
 publisher={Cambridge University Press},
     place={Cambridge},
      date={1990},
     pages={xii+204},
      isbn={0-521-37510-X},
    review={MR1066460 (92h:20002)},
}

\bib{Khovanov00}{article}{
   author={Khovanov, Mikhail},
   title={A categorification of the Jones polynomial},
   journal={Duke Math. J.},
   volume={101},
   date={2000},
   number={3},
   pages={359--426},
   issn={0012-7094},
   review={MR{1740682 (2002j:57025)}},
}

\bib{Wagner}{article}{
   author={Wagner, Emmanuel},
   title={Sur l'homologie de Khovanov-Rozansky des graphes et des entrelacs},
 journal={Ph.D. Thesis},   
date={2008}
}

\end{biblist}

\bibliographystyle{amsplain}

\end{document}